\documentclass[preprint,10pt,3p,numbers,square]{elsarticle}

\usepackage{amssymb}
\usepackage{amsmath}
\usepackage{amsthm}
\usepackage{xfrac}

\usepackage{tikz}
\usepackage{comment}
\usepackage{hyperref}

\usepackage{empheq}

\usepackage{array}    % for m{...}
\usepackage{makecell} % optional, but convenient

\newcommand{\pupt}[1]{\frac{\partial {#1}}{\partial t}}

\newcommand{\dudt}[1]{\frac{d {#1}}{dt}}

\newcommand{\popt}[2]{\frac{\partial {#1}}{\partial {#2}}}
\newcommand{\ppoppt}[2]{\frac{\partial^2 {#1}}{\partial {#2}^2}}
\newcommand{\ppto}[2]{\frac{\partial}{\partial {#2}}\left({#1}\right)}

\newcommand{\bm}[1]{\boldsymbol{#1}}

\newdefinition{rmk}{Remark}
\newdefinition{dfn}{Definition}
\newdefinition{thm}{Theorem}
\newdefinition{cor}{Corollary}
\newdefinition{lem}{Lemma}
\newdefinition{prop}{Proposition}

\newcommand{\kg}{\ensuremath{\text{kg}}}
\newcommand{\mol}{\ensuremath{\text{mol}}}
\newcommand{\kelv}{\ensuremath{\text{K}}}
\newcommand{\joule}{\ensuremath{\text{J}}}
\newcommand{\meter}{\ensuremath{\text{m}}}
\newcommand{\pascal}{\ensuremath{\text{Pa}}}
\newcommand{\second}{\ensuremath{\text{s}}}

\usepackage{lineno}

\begin{document}

\begin{frontmatter}

%% Title, authors and addresses

%% use the tnoteref command within \title for footnotes;
%% use the tnotetext command for theassociated footnote;
%% use the fnref command within \author or \affiliation for footnotes;
%% use the fntext command for theassociated footnote;
%% use the corref command within \author for corresponding author footnotes;
%% use the cortext command for theassociated footnote;
%% use the ead command for the email address,
%% and the form \ead[url] for the home page:
%% \title{Title\tnoteref{label1}}
%% \tnotetext[label1]{}
%% \author{Name\corref{cor1}\fnref{label2}}
%% \ead{email address}
%% \ead[url]{home page}
%% \fntext[label2]{}
%% \cortext[cor1]{}
%% \affiliation{organization={},
%%            addressline={}, 
%%            city={},
%%            postcode={}, 
%%            state={},
%%            country={}}
%% \fntext[label3]{}

%\title{Generalized Tadmor Conditions for Numerical Fluxes with Non-Conservative Secondary Physics: Applications to Real-Gas Dynamics} %% Article title

\title{Generalized Tadmor Conditions and Structure-Preserving Numerical Fluxes for the Compressible Flow of Real Gases}

%% use optional labels to link authors explicitly to addresses:
%% \author[label1,label2]{}
%% \affiliation[label1]{organization={},
%%             addressline={},
%%             city={},
%%             postcode={},
%%             state={},
%%             country={}}
%%
%% \affiliation[label2]{organization={},
%%             addressline={},
%%             city={},
%%             postcode={},
%%             state={},
%%             country={}}

\author[1,2]{R.B. Klein\corref{cor1}} %% Author name
\ead{rbk@cwi.nl}
\author[2,3]{B. Sanderse}
\ead{B.Sanderse@cwi.nl}
\author[1]{P. Costa}
\ead{P.SimoesCosta@tudelft.nl}
\author[1]{R. Pecnik}
\ead{R.Pecnik@tudelft.nl}
\author[1]{R.A.W.M Henkes}
\ead{R.A.W.M.Henkes@tudelft.nl} 

%% Author affiliation
\affiliation[1]{organization={Delft University of Technology, Process \& Energy},%Department and Organization
            addressline={Leeghwaterstraat 39}, 
            city={Delft},
            postcode={2628 CB}, 
            state={Zuid-Holland},
            country={The Netherlands}}

\affiliation[2]{organization={Centrum Wiskunde \& Informatica, Scientific Computing},%Department and Organization
            addressline={Science Park 123}, 
            city={Amsterdam},
            postcode={1098 XG}, 
            state={Noord-Holland},
            country={The Netherlands}}

\affiliation[3]{organization={Eindhoven University of Technology, Department of Mathematics and Computer Science},%Department and Organization
            addressline={Groene Loper 5}, 
            city={Eindhoven},
            postcode={PO Box 513}, 
            state={Brabant},
            country={The Netherlands}}

\cortext[cor1]{Corresponding author}

%% Abstract
%\begin{abstract}
%We extend Tadmor's flux condition for entropy conservative fluxes to the evolution of general non-convex, non-conserved secondary quantities. The new approach allows for a systematic method to the construction of numerical fluxes for conservation laws that consistently discretize the evolution equations of conserved and non-conservative secondary quantities alongside the primary conservative variables. Our analysis provides necessary conditions for the existence of such numerical fluxes. We show how discrete gradient functions can be used to solve the novel generalized flux conditions. As an application we are interested in the simulation of supercritical compressible turbulence. To simulate this effectively we use the proposed framework to derive an entropy-conserving and kinetic energy consistent numerical flux for arbitrary equations of state. A number of supercritical compressible flow test cases are solved with several non-ideal equations of state ranging in complexity.
%\end{abstract}

\begin{abstract}
We generalize Tadmor's algebraic numerical flux condition for entropy-conservative discretizations of conservation laws to a broader class of \emph{secondary structures}, i.e.\ possibly non-convex secondary quantities whose evolution can consist of both conservative and non-conservative contributions. The resulting \emph{generalized Tadmor condition} yields a discrete local balance law for secondary structures alongside the discrete conservation law that is solved. In contrast to the convex entropy setting, non-convex secondary quantities can have singular Hessians and non-injective gradients; this introduces an additional necessary structural requirement, which we term \emph{(discrete) null-consistency}. Null-consistency constrains admissible numerical work terms and is required for the existence and well-posedness of fluxes satisfying the generalized Tadmor condition. To construct such fluxes in practice, we show how \emph{discrete gradient operators} provide systematic construction methods even when some of the functions entering the secondary structure are arbitrary, as in compressible flow closed by an arbitrary equation of state. As an application, we derive an \emph{entropy-conserving} and \emph{kinetic-energy-consistent} numerical flux for the Euler equations with an arbitrary (non-ideal) equation of state. We demonstrate the performance of the resulting scheme on a set of supercritical/transcritical compressible-flow test cases using several non-ideal equations of state, including a fully turbulent transcritical flow with a state-of-the-art equation of state and models for viscosity and heat conductivity. Computations are performed with our new open-source, flexible, JAX-based, multi-GPU compressible flow solver for Helmholtz-based equations of state available at \href{https://github.com/rbklein/HelmEOS2}{github.com/rbklein/HelmEOS2}.
\end{abstract}

%%Graphical abstract
%\begin{graphicalabstract}
%\includegraphics{grabs}
%\end{graphicalabstract}

%%Research highlights
%\begin{highlights}
%\item Generalized Tadmor conditions
%\item Entropy conservation
%\item Kinetic energy consistency
%\item Numerical fluxes
%\item Supercritical fluids
%\item General equation of state
%\end{highlights}

%% Keywords
\begin{keyword}
balance laws \sep conservation laws \sep entropy conservation \sep kinetic-energy preservation \sep real-gas equation of state \sep supercritical fluid dynamics \sep numerical flux \sep generalized Tadmor conditions

%% PACS codes here, in the form: \PACS code \sep code

%% MSC codes here, in the form: \MSC code \sep code
%% or \MSC[2008] code \sep code (2000 is the default)

\end{keyword}

\end{frontmatter}

%% Add \usepackage{lineno} before \begin{document} and uncomment 
%% following line to enable line numbers
%\linenumbers

%\newpage
%\input{sections/introduction}
%\input{sections/secondary_structures}
%\input{sections/gen_tadmor_conds}
%\input{sections/real_gasses}
%\input{sections/num_fluxes}
%\input{sections/Experiments}
%\input{sections/conculsion}

\section{Introduction}
Many continuum models in physics take the form of systems of conservation laws, i.e.\ PDEs that evolve a set of \emph{primary} conserved variables through flux balances \cite{dafermoshyperbolic}. In practical computations—e.g.\ compressible turbulence, multi-scale flows, or in strongly non-uniform thermodynamic regimes—standard discretizations may develop severe stability and robustness issues.

A promising strategy to alleviate these issues is the design of \emph{structure-preserving} schemes: discretizations that retain a certain property of the conservation law. For example, schemes can retain the Hamiltonian structure \cite{morrisonhamiltonian}, the differential geometric structure \cite{palhamass}, or the dynamics of extra quantities \cite{energysanderse} in addition to solving the conservation law. In this research, we are particularly interested in the latter: discretizations that, in addition to the primary conservation law, remain consistent with the evolution of selected \emph{secondary} quantities. Typical examples in compressible flow are discrete consistency with transport relations for kinetic energy \cite{ranochaentropy,jamesonformulation,kennedyreduced,pirozzoligeneralized}, entropy \cite{kuyakinetic,kuyahigh,tamakicomprehensive,tadmorentropy}, and pressure \cite{ranochapreventing,bernadeskinetic}. When such secondary structure is preserved, one can often reproduce discrete analogues of continuous stability mechanisms, stabilizing the discretization. For instance, entropy conservation/dissipation can be used to derive nonlinear stability statements \cite{svardweak,serrerelative}, while kinetic-energy consistency balances and controls the discrete exchange between mechanical and thermodynamic energy components \cite{ranochaentropy, jamesonformulation} and prevents excessive numerical dissipation.

A particularly general approach to secondary-structure preservation is Tadmor's framework for \emph{entropy conservative} fluxes \cite{tadmorentropy,tadmornumerical}. Convex mathematical entropies are functions generalizing the thermodynamic entropy of gas dynamics to other conservation laws \cite{LeVequefinite, evanspartial, warneckegodunov}. They are conserved when solutions are smooth and thus satisfy an \textit{additional conservation law}. Tadmor derives an algebraic numerical flux condition (the Tadmor condition) that guarantees a discrete local entropy conservation law is also satisfied by the discretization. This approach has enabled systematic constructions of highly stable entropy-conserving fluxes for, among others, the Burgers equation \cite{jamesonconstruction}, compressible Euler flow with ideal-gas thermodynamics \cite{ismailaffordable,chandrashekarkinetic,ranochathesis,ranochaentropy,ranochacomparison}, shallow-water/ocean models \cite{fjordholmenergy}, and MHD \cite{wintersaffordable}. Furthermore, entropy-conserving fluxes form a basis for designing schemes that handle non-smooth, more specifically discontinuous, solutions \cite{fjordholarbitrarily}.

However, many secondary quantities of practical interest are \emph{non-convex} and/or satisfy \emph{non-conservative balance laws} rather than additional conservation laws (e.g.\ kinetic energy and pressure in compressible turbulence \cite{jamesonformulation, ranochaentropy, bernadeskinetic}). For such quantities, a general analogue of Tadmor's framework is missing, and structure-preserving treatments are often performed on a case-by-case basis. Two fundamental obstacles arise:
(i) for non-convex secondary quantities, the necessary gradients may fail to be injective and their Hessian may be singular, which imposes additional compatibility constraints; and
(ii) in real-gas or general-equation-of-state settings, the fluxes and thermodynamic potentials entering the secondary relations may only be defined implicitly (or only up to an arbitrary choice of thermodynamic model), which limits standard construction methods \cite{ranochathesis, fjordholmenergy, chandrashekarkinetic, ranochacomparison}.

In this work, we generalize Tadmor's approach from convex entropies to \emph{general secondary structures} whose evolution consists of conservative and non-conservative parts. Our first contribution is a generalized compatibility framework at the differential level and its discrete analogue, leading to a \emph{generalized Tadmor condition} that enforces a discrete balance law for a prescribed secondary structure. A key new feature is a structural requirement we call \emph{(discrete) null-consistency}: whenever the Hessian of the secondary quantity has a non-trivial kernel, the remaining terms in the generalized compatibility relations must satisfy additional constraints along the associated null directions. We show that discrete null-consistency is not optional, but that it is a necessary condition for well-posedness and for the existence of numerical fluxes that satisfy the generalized Tadmor condition. In this article, we develop our framework for smooth solutions only, to avoid the difficulties associated with discontinuous solutions and non-conservative balance laws \cite{dalmasodefinition, theinbalance}.

Our second contribution is methodological: we show how \emph{discrete gradient operators} \cite{itohhamiltonian,mclachlangeometric,gonzaleztime} can be used to solve the generalized Tadmor conditions in settings where the relevant functions are only partially specified, as is the case for compressible flow with an arbitrary equation of state. In particular, discrete gradients provide systematic jump expansions for thermodynamic potentials without requiring closed-form expressions \cite{ranochathesis, fjordholmenergy, chandrashekarkinetic, ranochacomparison}.

The framework is motivated by, and applied to, compressible turbulent flow of \emph{supercritical} fluids, where thermodynamic properties can vary extremely rapidly (\autoref{fig:eos_comparison}) near so-called pseudo-phase-transition regimes which can only be described with more general non-ideal equations of state \cite{senguptafully,boldinidirect,boldinicubens,holystthermodynamics,guardonenonideal}. In this case, structure-preserving schemes must be adapted to deal with these general equations of state \cite{aielloentropy, oblapenkoentropy}. As a concrete application, we derive an entropy-conserving and kinetic-energy-consistent numerical flux for the Euler equations with an arbitrary equation of state (equation \eqref{eq:numflux}). Here, entropy conservation demonstrates the use of discrete gradients, while kinetic-energy consistency can be analyzed using our non-conservative extension of Tadmor's framework. The resulting flux removes a singularity present in a recently proposed entropy-conserving and kinetic-energy-consistent flux for general EoS \cite{aielloentropy} by enforcing thermodynamic gradient consistency via discrete gradients. We analyze this singularity mechanism and demonstrate the robustness of the proposed flux on a set of supercritical compressible-flow test cases with multiple non-ideal equations of state.

The remainder of the article is organized as follows. In \autoref{sec:sscl}, we review entropy analysis for conservation laws and introduce our notion of general secondary structures and their compatibility conditions. In \autoref{sec:spns}, we derive the generalized Tadmor condition and the associated discrete null-consistency requirements, and we show how discrete gradients enable systematic constructions. In \autoref{sec:fluxeszz}, we apply the framework to real-gas Euler flow and derive an entropy-conserving, kinetic-energy-consistent flux for arbitrary equations of state. In \autoref{sec:numexps}, we present numerical experiments for supercritical/transcritical test cases.
\section{Secondary structures of conservation laws}\label{sec:sscl}
In this section, we develop a theoretical framework that ensures a solution of a conservation law also satisfies an additional \emph{non-conservative balance law}. We will use this compatibility framework later in the article to inform a discrete framework that, in turn, allows us to construct structure-preserving discretizations. The theoretical framework extends and uses tools of an existing framework for convex extensions of conservation laws \cite{friedrichssystems, godlewskinumerical}. Hence, we will first introduce this framework. After that, we will continue with our novel non-conservative extension.

\subsection{Conservation laws and tools in entropy analysis}\label{sec:entanalysis}
We are interested in general systems of conservation laws, which are systems of partial differential equations (PDEs) formulated in the following form:
\begin{equation}
    \pupt{\bm{u}} + \sum_{i=1}^d \ppto{\bm{f}^i(\bm{u})}{x_i} = 0, 
    \label{eq:cl}
\end{equation}
where $d\in \mathbb{N}$ and $\bm{x} \in \mathbb{R}^d$ is a spatial coordinate, $t \in (0, t_f]$ is a temporal coordinate with $t_f > 0$, $\bm{u} : \mathbb{R}^d \times [0,t_f] \rightarrow \mathcal{U}$ is the solution function taking values in an open (often convex) set $\mathcal{U} \subseteq \mathbb{R}^n$, $n\in \mathbb{N}$ is the number of conservative variables, $\bm{f}^i : \mathcal{U} \rightarrow \mathbb{R}^n$ is the flux function in coordinate-direction $i = 1,...,d$ and $x_i$ denotes the $i$-th component of the vector $\bm{x}$. At $t=0$, the solution function is equal to some continuously differentiable initial condition $\bm{u}(\bm{x},0) = \bm{u}_0(\bm{x})$. When the solution function $\bm{u}$ is continuously differentiable on $\mathbb{R}^d \times [0,t_f]$, many conservation laws describing physical systems often also satisfy additional conservation laws:
\begin{equation}
    \pupt{s(\bm{u})} + \sum_{i=1}^d\ppto{\mathcal{F}^i(\bm{u})}{x_i} = 0,
    \label{eq:cle}
\end{equation}
where $s : \mathcal{U} \rightarrow \mathbb{R}$ is a strictly convex $C^2(\mathcal{U})$ function and $\mathcal{F}^i : \mathcal{U} \rightarrow \mathbb{R}$ is $C^1(\mathcal{U})$. The prototypical example of such a function is the negated entropy density of the Euler equations of gas dynamics. For \eqref{eq:cle} to be satisfied by smooth solutions of \eqref{eq:cl}, it is sufficient if, for all $i$, the following compatibility condition holds:
\begin{equation}
    \bm{\eta}(\bm{u})^T \popt{\bm{f}^i}{\bm{u}} = \popt{\mathcal{F}^i}{\bm{u}}^T, 
    \label{eq:pcond}
\end{equation}
where $\bm{\eta} : \mathcal{U} \rightarrow \mathbb{R}^n, \bm{u} \mapsto \popt{s}{\bm{u}}(\bm{u})$. In the case of entropy functions, the gradient function $\bm{\eta}$ is given a name. They are called entropy variables, as $\bm{\eta}$ is an injective map due to the convexity of $s$. If the compatibility condition is satisfied, the general function $s$ is called a mathematical entropy of the conservation law \eqref{eq:cl} and $\mathcal{F}^i$ the associated entropy flux in analogy to the Euler equation. Gathering all entropy fluxes in a vector $\bm{\mathcal{F}}$, a pair $(s,\bm{\mathcal{F}})$ satisfying \eqref{eq:pcond} is referred to as an entropy pair of the conservation law \eqref{eq:cl}. The entropy conservation law \eqref{eq:cle} now follows from the straightforward manipulations:
\begin{align*}
    0 = \bm{\eta}(\bm{u})^T\left(\pupt{\bm{u}} + \sum_{i=1}^d \ppto{\bm{f}^i(\bm{u})}{x_i}\right) &= \bm{\eta}(\bm{u})^T\pupt{\bm{u}} + \sum_{i=1}^d \bm{\eta}(\bm{u})^T \popt{\bm{f}^i(\bm{u})}{\bm{u}}  \popt{\bm{u}}{x_i} \\
     &= \pupt{s(\bm{u})} + \sum_{i=1}^d\ppto{\mathcal{F}^i(\bm{u})}{x_i}.
\end{align*}
We reiterate that these manipulations are only valid when $\bm{u}$ is continuously differentiable. A useful tool in entropy analysis is the so-called entropy flux potential, defined as:
\begin{equation}
    \psi^i (\bm{u}) := \bm{\eta}(\bm{u})^T\bm{f}^i(\bm{u}) - \mathcal{F}^i(\bm{u}).
    \label{eq:pote}
\end{equation}
Its use comes from the fact that the compatibility condition \eqref{eq:pcond} can be alternatively phrased in terms of the entropy flux potential as follows:
\begin{equation}
    \bm{f}^i(\bm{u})^T\popt{\bm{\eta}}{\bm{u}} = \popt{\psi^i}{\bm{u}}^T, 
    \label{eq:dcond}
\end{equation}
which should hold for all $i$. We will see later that discrete variants of specifically this form of compatibility condition are crucial for proofs of structure preservation. We refer to \eqref{eq:pcond} as the primal compatibility condition, while \eqref{eq:dcond} is referred to as the dual compatibility condition. The primal and dual compatibility relations are equivalent for $C^1(\mathcal{U})$ flux functions $\bm{f}^i$ in light of the product rule identity:
\begin{equation*}
    \ppto{\bm{\eta}^T\bm{f}^i}{\bm{u}}^T = \bm{\eta}(\bm{u})^T \popt{\bm{f}^i}{\bm{u}} + \bm{f}^i(\bm{u})^T\popt{\bm{\eta}}{\bm{u}}.
\end{equation*}
Substituting either the primal or dual compatibility condition into the above immediately results in satisfaction of the other condition with $\psi^i = \bm{\eta}^T\bm{f}^i - \mathcal{F}^i$ and $\mathcal{F}^i = \bm{\eta}^T\bm{f}^i - \psi^i$, respectively. The entropy conservation law can be derived from the dual compatibility condition using the product rule:
\begin{align}
    0 = \bm{\eta}(\bm{u})^T \left(\pupt{\bm{u}} + \sum_{i=1}^d \ppto{\bm{f}^i(\bm{u})}{x_i}\right)  &= \pupt{s(\bm{u})} + \sum_{i=1}^d \ppto{\bm{\eta}(\bm{u})^T\bm{f}^i(\bm{u})}{x_i} - \sum_{i=1}^d\bm{f}^i(\bm{u})^T \popt{\bm{\eta}(\bm{u})}{x_i} \label{eq:dualderiv}  \\
    &\overset{\eqref{eq:pote}}{=} \pupt{s(\bm{u})} + \sum_{i=1}^d\ppto{\mathcal{F}^i(\bm{u})}{x_i} + \sum_{i=1}^d \left(\popt{\psi^i}{\bm{u}}^T - \bm{f}^i(\bm{u})^T \popt{\bm{\eta}}{\bm{u}}\right)\popt{\bm{u}}{x_i}  \nonumber \\
    &\overset{\eqref{eq:dcond}}{=} \pupt{s(\bm{u})} + \sum_{i=1}^d\ppto{\mathcal{F}^i(\bm{u})}{x_i}. \nonumber
\end{align}
In this way, the entropy flux potential $\psi^i$ is just the residual that has to be canceled out by the terms appearing outside of the divergence in the third line to obtain $\mathcal{F}^i$ as an entropy flux. This provides a natural way to extend the tools from entropy analysis to general secondary quantities.
%\overset{\eqref{eq:numqflux}}{=} 
\subsection{General secondary quantities and generalized compatibility}
Besides conserved secondary quantities like entropy functions, many physical systems also exhibit secondary quantities adhering to more general balance laws containing conservative \emph{and} non-conservative terms. The approach and tools developed in the analysis of entropy functions of conservation laws remain relatively unexplored in this more general non-conservative setting. Existing literature has considered several of such non-conserved secondary quantities, for example kinetic energy \cite{ranochaentropy, jamesonformulation} and pressure \cite{ranochapreventing, bernadeskinetic, demichelenovel} in compressible flow applications, but not through the lens of entropy analysis \cite{tadmorentropy, tadmornumerical}. In this article, we aim to explore how these approaches developed for convex entropies can be generalized to the analysis of more general non-conserved secondary quantities, especially for the construction of numerical schemes.

The more general evolution equations associated with secondary quantities can often be formulated as consisting of a conservative and a non-conservative part. Specifically, we will assume a secondary quantity $q : \mathcal{U} \rightarrow \mathbb{R}$, which we allow to be \emph{non-convex}, evolves according to:
\begin{equation}
    \pupt{q(\bm{u})} + \sum_{i=1}^d \ppto{\mathcal{F}^i_q(\bm{u})}{x_i} + \sum_{i=1}^d c_q^i(\bm{u})\ppto{\mathcal{G}^i_q(\bm{u})}{x_i} = 0.
    \label{eq:clq}
\end{equation}
This precise form is assumed due to its prevalence in fluid dynamics applications (e.g.\ kinetic energy \cite{ranochaentropy}, pressure \cite{bernadeskinetic, toutantgeneral} satisfy this). Here, $\mathcal{F}_q^i : \mathcal{U} \rightarrow \mathbb{R}$ is the conservative $q$-flux and the non-conservative part of the evolution is governed by $c_q^i, \mathcal{G}_q^i : \mathcal{U} \rightarrow \mathbb{R}$ with $i=1,...,d$. We will refer to $\mathcal{G}_q^i$ as the non-conservative $q$-flux and to $c_q^i$ as the non-conservative $q$-flux coefficient. As it often models a form of work performed on or by the system, we will refer to the product of the non-conservative $q$-flux derivative and coefficient as it appears in \eqref{eq:clq} as the \emph{work term}. With \eqref{eq:clq}, we can generalize the primal compatibility condition \eqref{eq:pcond} of conserved secondary quantities to the case of general secondary quantities. Assuming $q$ is twice-continuously differentiable and $\mathcal{F}_q^i, \mathcal{G}_q^i$ are continuously differentiable, additionally denoting the gradient $\bm{\xi} : \mathcal{U} \rightarrow \mathbb{R}^n, \bm{u} \mapsto \popt{q}{\bm{u}}(\bm{u})$, $C^1$ solutions to \eqref{eq:cl} adhere to \eqref{eq:clq} if for all $i$ the following compatibility condition is satisfied:
\begin{equation}
    \bm{\xi}(\bm{u})^T \popt{\bm{f}^i}{\bm{u}} = \popt{\mathcal{F}_q^i}{\bm{u}}^T + c_q^i(\bm{u})\popt{\mathcal{G}_q^i}{\bm{u}}^T.
    \label{eq:pcondq}
\end{equation}
This is a \emph{generalized primal compatibility condition}. It follows from equivalent manipulations as for convex entropies that:
\begin{align}
    0 = \bm{\xi}(\bm{u})^T\left(\pupt{\bm{u}} + \sum_{i=1}^d \ppto{\bm{f}^i(\bm{u})}{x_i}\right) 
    &= \bm{\xi}(\bm{u})^T\pupt{\bm{u}} + \sum_{i=1}^d \bm{\xi}(\bm{u})^T \popt{\bm{f}^i(\bm{u})}{\bm{u}}  \popt{\bm{u}}{x_i}  \label{eq:diffprimderiv} \\
     &= \pupt{q(\bm{u})} + \sum_{i=1}^d \ppto{\mathcal{F}_q^i(\bm{u})}{x_i} + \sum_{i=1}^dc_q^i(\bm{u})\ppto{\mathcal{G}^i_q(\bm{u})}{x_i}. \nonumber
\end{align}
This motivates us to generalize the definition of an entropy pair to a more general secondary structure.
\begin{dfn}[Secondary structures]
    A secondary structure $(q,\bm{\mathcal{F}}_q,\bm{\mathcal{G}}_q,\bm{c}_q)$ of a conservation law \eqref{eq:cl} is an ordered set consisting of a $C^2$ secondary quantity $q : \mathcal{U} \rightarrow \mathbb{R}$, a vector of $C^1$ conservative fluxes $\mathcal{F}_q^i:\mathcal{U} \rightarrow \mathbb{R}$, a vector of $C^1$ non-conservative fluxes $\mathcal{G}_q^i:\mathcal{U} \rightarrow \mathbb{R}$ and a vector of non-conservative flux coefficients $c_q^i:\mathcal{U} \rightarrow \mathbb{R}$ that satisfy the set of PDEs \eqref{eq:pcondq}. The secondary structure is conservative if \eqref{eq:pcondq} is satisfied with $\bm{\mathcal{G}}_q,\bm{c}_q = 0$.
\end{dfn}
An entropy pair of \eqref{eq:cl} thus forms a conservative secondary structure. A generalized dual compatibility condition can also be constructed, for this we define the conservative flux residual function:
\begin{equation}
    \psi_q^i(\bm{u}) := \bm{\xi}(\bm{u})^T\bm{f}^i(\bm{u}) - \mathcal{F}_q^i(\bm{u}),
    \label{eq:potq}
\end{equation}
which takes the same form as the entropy flux potential \eqref{eq:pote}. The generalized primal compatibility condition can be rephrased in terms of this residual function to obtain the following \emph{generalized dual compatibility condition}:
\begin{equation}
    \popt{\psi_q^i}{\bm{u}}^T - \bm{f}^i(\bm{u})^T \popt{\bm{\xi}}{\bm{u}} = c_q^i(\bm{u}) \popt{\mathcal{G}_q^i}{\bm{u}}^T. 
    \label{eq:dcondq}
\end{equation}
Equivalence between the generalized primal and dual compatibility condition follows for $C^1$ flux functions $\bm{f}^i$ using the same product rule identity as for entropy. The secondary evolution equation \eqref{eq:clq} can be obtained from the generalized dual compatibility and the same product rule approach as for entropy in \eqref{eq:dualderiv}:
\begin{align*}
    0 = \bm{\xi}(\bm{u})^T \left(\pupt{\bm{u}} + \sum_{i=1}^d \ppto{\bm{f}^i(\bm{u})}{x_i}\right)  &= \pupt{q(\bm{u})} + \sum_{i=1}^d \ppto{\bm{\xi}(\bm{u})^T \bm{f}^i(\bm{u})}{x_i} - \sum_{i=1}^d\bm{f}^i(\bm{u})^T \popt{\bm{\xi}(\bm{u})}{x_i} \\
    &\overset{\eqref{eq:potq}}{=} \pupt{q(\bm{u})} + \sum_{i=1}^d\ppto{\mathcal{F}_q^i(\bm{u})}{x_i} + \sum_{i=1}^d \left(\popt{\psi_q^i}{\bm{u}}^T - \bm{f}^i(\bm{u})^T \popt{\bm{\xi}}{\bm{u}}\right)\popt{\bm{u}}{x_i} \\
    &\overset{\eqref{eq:dcondq}}{=} \pupt{q(\bm{u})} + \sum_{i=1}^d\ppto{\mathcal{F}_q^i(\bm{u})}{x_i} +  \sum_{i=1}^dc_q^i(\bm{u}) \ppto{\mathcal{G}_q^i(\bm{u})}{x_i}.
\end{align*}
In the generalized case, the flux residual $\popt{\psi_q^i}{\bm{u}}$ should not cancel out the non-conservative remainder of the product rule application $\bm{f}^i(\bm{u})^T \popt{\bm{\xi}}{\bm{u}}$, but instead should correct it to the work term we expect. This is precisely the statement of the dual compatibility condition, equation \eqref{eq:dcondq}.

\begin{rmk}[General secondary balance law]
    We have stated the work term in \eqref{eq:clq} with only one term per coordinate direction, as this is how it often appears in fluid dynamical contexts. However, the most general form of the work term that can appear in the evolution equation of a function $q(\bm{u})$ of the solution $\bm{u}$ of a conservation law \eqref{eq:cl} is:
    \begin{equation*}
        \sum_{i=1}^d\sum_{j=1}^nc_q^{i,j}(\bm{u})\ppto{\mathcal{G}_q^{i,j}(\bm{u})}{x_i}.
    \end{equation*}
    It is straightforward to show by an application of the product rule that any sufficiently smooth function $q$ of the solution $\bm{u}$ of a conservation law \eqref{eq:cl} satisfies a balance law with a work term as above and a conservative flux $\mathcal{F}_q^i(\bm{u}) = \bm{\xi}(\bm{u})^T \bm{f}^i(\bm{u})$. All results in this article can be extended to this case by replacing the single work term with a sum.
\end{rmk}

\subsection{Null-consistency}\label{sec:nullcons}
\begin{figure}[]
    \centering
    \includegraphics[width=0.6\linewidth]{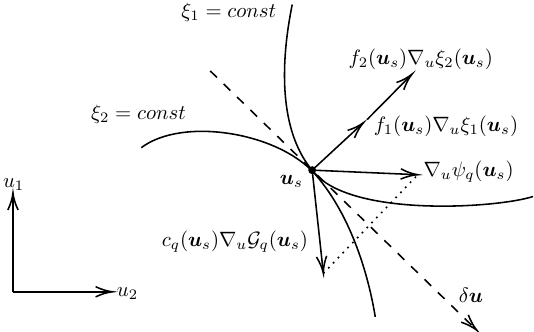}
    \caption{Illustration of the null-consistency property at a point $\bm{u}_s \in \mathcal{U}$ where $\mathcal{H}_q(\bm{u}_s) := \ppoppt{q}{\bm{u}}(\bm{u}_s)$ is singular for $n=2$ and $d=1$. Note how $f_1(\bm{u}_s) \nabla_u \xi_1(\bm{u}_s) + f_2(\bm{u}_s) \nabla_u \xi_2(\bm{u}_s) = \nabla_u \psi_q(\bm{u}_s) - c_q(\bm{u}_s)\nabla_u \mathcal{G}_q(\bm{u}_s)$ to satisfy the dual compatibility condition.}
    \label{fig:solvacond}
\end{figure} 
The generalized dual compatibility condition \eqref{eq:dcondq} is a system of PDEs \emph{on} $\mathcal{U} \subseteq \mathbb{R}^n$ relating a secondary structure $q,\psi_q^i, \mathcal{G}_q^i,c_q^i : \mathcal{U} \rightarrow \mathbb{R}$ to a conservation law flux $\bm{f}^i : \mathcal{U} \rightarrow \mathbb{R}^n$. We now examine a property of solutions to these PDEs that appears when $q$ is allowed to be non-convex. We call this property null-consistency. When $q$ is non-convex, its Hessian matrix: 
\begin{equation*}
    \mathcal{H}_q(\bm{u}) := \ppoppt{q}{\bm{u}}(\bm{u}) = \popt{\bm{\xi}}{\bm{u}}(\bm{u}),
\end{equation*}
is no longer guaranteed to be non-singular, in contrast to the case of convex entropy functions \cite{warneckegodunov}. At any point $\bm{u}_s \in \mathcal{U}$ where $\mathcal{H}_q(\bm{u}_s)$ is singular, null-consistency constrains how the functions $\psi_q^i, \mathcal{G}_q^i$ may vary in state space at the point $\bm{u}_s$. This property will be important to consider, especially due to the fact that it occurs in multiple discrete forms as a \emph{necessary} condition for the \emph{existence} of structure-preserving discretizations, as we will see in later sections where we analyze numerical schemes.

In more detail, the null-consistency property can be derived as follows. We momentarily use the classical vector calculus notation $\nabla_u := \frac{\partial}{\partial \bm{u}}$ for the gradient on $\mathcal{U}$ and use $\left<\cdot,\cdot \right> : \mathbb{R}^n \times \mathbb{R}^n \rightarrow \mathbb{R}$ and $||\cdot|| : \mathbb{R}^n \rightarrow\mathbb{R}_+$ for the Euclidean inner-product and norm, respectively. Consider a point $\bm{u}_s \in \mathcal{U}$ where $\mathcal{H}_q(\bm{u}_s)$ is singular. Let $\delta \bm{u} \in \ker (\mathcal{H}_q(\bm{u}_s))$ where $\delta \bm{u} \neq 0$. Multiplying the generalized dual compatibility \eqref{eq:dcondq} from the right with $\delta \bm{u}$, we get:
\begin{equation*}
    \left<\nabla_u \psi_q^i(\bm{u}_s), \delta \bm{u}\right> - \bm{f}^i(\bm{u})^T\mathcal{H}_q(\bm{u}_s) \delta \bm{u} = \left<c_q^i(\bm{u}_s)\nabla_u \mathcal{G}_q^i(\bm{u}_s), \delta \bm{u}\right>.
\end{equation*}
Since $\delta \bm{u} \in \ker(\mathcal{H}_q(\bm{u}_s))$, we have $\mathcal{H}_q(\bm{u}_s) \delta \bm{u} = 0$ and the expression simplifies to:
\begin{equation}
    \left<\nabla_u \psi_q^i(\bm{u}_s), \delta \bm{u}\right>  = c_q^i(\bm{u}_s) \left<\nabla_u \mathcal{G}_q^i(\bm{u}_s), \delta \bm{u}\right> \quad \forall \delta \bm{u} \in \ker(\mathcal{H}_q(\bm{u}_s)). \label{eq:solvacond}
\end{equation}
We can scale both sides of the equation with $||\delta \bm{u}||^{-1}$ to normalize $\delta \bm{u}$. Then \eqref{eq:solvacond} states that, at some point $\bm{u}_s \in \mathcal{U}$ where the kernel of $\mathcal{H}_q(\bm{u}_s)$ is non-trivial, \emph{the directional derivatives of $\psi_q^i$ and $\mathcal{G}_q^i$ in direction $\delta \bm{u}$ must become proportional with factor $c_q^i(\bm{u}_s)$}. We call this property \emph{null-consistency}. Note that these directional derivatives are taken in $\mathcal{U}$-space.

In \autoref{fig:solvacond}, an example of null-consistency is sketched for $n=2$, setting $d=1$ to forgo the need to indicate a spatial direction with superscripts. The rows of $\mathcal{H}_q(\bm{u}_s)$ are given by the gradients $\nabla_u\xi_1(\bm{u}_s), \nabla_u\xi_2(\bm{u}_s)$ on $\mathcal{U}$-space. Assuming $\delta \bm{u} \in \ker(\mathcal{H}_q(\bm{u}_s))$, we have that the gradients $\nabla_u\xi_1(\bm{u}_s), \nabla_u\xi_2(\bm{u}_s)$ are orthogonal to $\delta \bm{u}$ so that $f_1(\bm{u}_s) \nabla_u \xi_1(\bm{u}_s)$ and $f_2(\bm{u}_s) \nabla_u \xi_2(\bm{u}_s)$ are collinear. If $\nabla_u \psi_q(\bm{u}_s)$ is not orthogonal to $\delta \bm{u}$, the vector identity \eqref{eq:dcondq} can only hold if the component of $c_q(\bm{u}_s)\nabla_u \mathcal{G}_q(\bm{u}_s)$ along $\delta \bm{u}$ is equal to that of $\nabla_u \psi_q(\bm{u}_s)$ i.e.\ $\langle\nabla_u \psi_q(\bm{u}_s), \delta \bm{u}\rangle = \langle c_q(\bm{u}_s)\nabla_u \mathcal{G}_q(\bm{u}_s),\delta \bm{u}\rangle$. Since these are the components of gradients, it thus requires that the change of $\psi_q$ and $\mathcal{G}_q$ at $\bm{u}_s \in \mathcal{U}$ along $\delta \bm{u}$ becomes proportional through $c_q(\bm{u}_s)$. If $\nabla_u \xi_1(\bm{u}_s),\nabla_u \xi_2(\bm{u}_s) \neq 0$, $\delta \bm{u}$ must then also be tangent to the level curves of $\xi_1, \xi_2$ at $\bm{u}_s$.

\section{Structure-preserving numerical schemes}\label{sec:spns}
In this section, we develop a discrete framework that ensures a finite-volume method for a conservation law also satisfies a compatible discrete secondary balance law. This framework is a discrete analogue of the theoretical framework developed in the previous section. Additionally, it extends an existing discrete framework to ensure discrete entropy conservation laws \cite{tadmorentropy, tadmornumerical}, which we briefly review first.

\subsection{Entropy-conserving discretizations}\label{sec:ecdiscs}
We will discretize the conservation law \eqref{eq:cl} on a spatial grid, indexed by multi-indices $(i_1,...,i_d) \in \mathbb{N}^d$, with a finite-volume scheme:
\begin{equation}
    \dudt{\bm{u}_{\bm{I}}} + \sum_{j=1}^d \frac{1}{\Delta x_j}\left(\bm{f}_{\bm{I}+\bm{1}_j/2}^j - \bm{f}_{\bm{I}-\bm{1}_j/2}^j \right) = 0,
    \label{eq:disc}
\end{equation}
where $\bm{u}_{\bm{I}} : [0,t_f] \rightarrow \mathbb{R}^n$ is the solution in grid cell $\bm{I} := (i_1, ..., i_d)$ and $\Delta x_j > 0$ is the cell-width in coordinate direction $j$, which we assume uniform in direction $j$ to ease notation. We refer to neighboring grid cells with respect to grid cell $\bm{I}$ by adding a multi-index of zeros with a $1$ at position $j$, which we denote $\bm{1}_j$. For example, the index of the neighboring grid point in the first coordinate direction is $\bm{I}+\bm{1}_1 = (i_1+1,...,i_d)$. Finally, $\bm{f}^j_{\bm{I}+\bm{1}_j/2}$ is short for $\bm{f}_h(\bm{u}_{\bm{I}},\bm{u}_{\bm{I}+\bm{1}_j})$, where $\bm{f}_h : \mathcal{U}\times \mathcal{U} \rightarrow \mathbb{R}^n$ is a sufficiently smooth numerical flux function. We consider numerical flux functions which are consistent and symmetric in the following sense:
\begin{enumerate}
    \item consistency: $\bm{f}_h^j(\bm{u},\bm{u}) = \bm{f}^j(\bm{u}), \quad \forall \bm{u} \in \mathcal{U}$,
    \item symmetry: $\bm{f}_h^j(\bm{u},\bm{v}) = \bm{f}_h^j(\bm{v},\bm{u}), \quad \forall \bm{u},\bm{v} \in \mathcal{U}$.
\end{enumerate}
The framework due to Tadmor provides a third algebraic condition for a numerical flux in order to be entropy-conserving \cite{tadmorentropy, tadmornumerical}. We are interested in generalizing this framework for entropy-conserving schemes due to Tadmor to general secondary quantities. Therefore we first review this framework. We speak of discrete entropy conservation when the local entropy value $s_{\bm{I}} := s(\bm{u}_{\bm{I}})$ evolves according to a discrete conservation law of the same form as \eqref{eq:disc} and consistently discretizes \eqref{eq:cle}. The local entropy evolution must thus be of the following form:
\begin{equation}
    \dudt{s_{\bm{I}}} +\sum_{j=1}^d \frac{1}{\Delta x_j}\left(\mathcal{F}_{s,\bm{I}+\bm{1}_j/2}^j - \mathcal{F}_{s,\bm{I}-\bm{1}_j/2}^j \right) = 0,
    \label{eq:numcle}
\end{equation}
where $\mathcal{F}_{s,\bm{I}+\bm{1}_j/2}^j := \mathcal{F}_{s,h}^j(\bm{u}_{\bm{I}},\bm{u}_{\bm{I}+\bm{1}_j})$ and $\mathcal{F}_{s,h} : \mathcal{U} \times \mathcal{U} \rightarrow \mathbb{R}$ is a consistent and symmetric numerical approximation to the entropy flux $\mathcal{F}_s$ between the cells. Let $\bm{\eta}_{\bm{I}} := \bm{\eta}(\bm{u}_{\bm{I}})$ be a vector of local entropy variable values. To obtain \eqref{eq:numcle} it is then required to satisfy a discrete analogue of the primal compatibility condition:
\begin{equation}
    \bm{\eta}_{\bm{I}}^T\left(\bm{f}_{\bm{I}+\bm{1}_j/2}^j - \bm{f}_{\bm{I}-\bm{1}_j/2}^j \right) = \mathcal{F}_{s,\bm{I}+\bm{1}_j/2}^j - \mathcal{F}_{s,\bm{I}-\bm{1}_j/2}^j.
    \label{eq:numpconde}
\end{equation}
%This can be seen by the following discrete manipulations:
%\begin{align*}
%    0 &= \bm{\eta}_{\bm{I}}^T\left( \dudt{\bm{u}_{\bm{I}}} + \sum_{j=1}^d \frac{1}{\Delta x_j}\left(\bm{f}_{\bm{I}+\bm{1}_j/2}^j - \bm{f}_{\bm{I}-\bm{1}_j/2}^j \right)\right) \\
%    &= \bm{\eta}_{\bm{I}}^T\dudt{\bm{u}_{\bm{I}}} + \sum_{j=1}^d \frac{1}{\Delta x_j} \bm{\eta}_{\bm{I}}^T\left(\bm{f}_{\bm{I}+\bm{1}_j/2}^j - \bm{f}_{\bm{I}-\bm{1}_j/2}^j \right) \\
%    &= \dudt{s_{\bm{I}}} +\sum_{j=1}^d \frac{1}{\Delta x_j}\left(\mathcal{F}_{s,\bm{I}+\bm{1}_j/2}^j - \mathcal{F}_{s,\bm{I}-\bm{1}_j/2}^j \right).
%    \label{eq:entropyevo} 
%\end{align*} where in case the arguments originate from a grid function $\bm{u}_{\bm{I}}$ we will sometimes also use the short notation $\overline{g}_{\bm{I}+\bm{1}_j/2} := \overline{g}(\bm{u}_{\bm{I}},\bm{u}_{\bm{I}+\bm{1}_j})$. 
The discrete primal compatibility condition \eqref{eq:numpconde} is difficult to work with as the flux appears twice and three grid points are involved. Let $\overline{(\cdot)}$ and $\Delta (\cdot)$ applied to any function $g$ denote: 
\begin{equation}
    \overline{g}(\bm{u},\bm{v}) := \frac{g(\bm{v}) + g(\bm{u})}{2}, \quad \Delta g(\bm{u},\bm{v}) := g(\bm{v}) - g(\bm{u}),
    \label{eq:meananddiff}
\end{equation}
respectively. This issue can then be solved by defining the numerical flux as:
\begin{equation}
    \mathcal{F}_{s,h}(\bm{u}_{\bm{I}},\bm{u}_{\bm{I}+\bm{1}_j}) := \overline{\bm{\eta}}(\bm{u}_{\bm{I}}, \bm{u}_{\bm{I}+\bm{1}_j}) ^T\bm{f}_h^j(\bm{u}_{\bm{I}},\bm{u}_{\bm{I}+\bm{1}_j}) - \overline{\psi}_{s}^j(\bm{u}_{\bm{I}},\bm{u}_{\bm{I}+\bm{1}_j}),
    \label{eq:numeflux}
\end{equation}
which is clearly consistent if $\bm{f}_h^j$ is consistent, as can be seen from \eqref{eq:pote}. With the numerical entropy flux defined as in \eqref{eq:numeflux}, we can instead use a discrete dual compatibility condition:
\begin{equation}
    \bm{f}_h^j(\bm{u}_{\bm{I}},\bm{u}_{\bm{I}+\bm{1}_j})^T \Delta \bm{\eta}(\bm{u}_{\bm{I}}, \bm{u}_{\bm{I}+\bm{1}_j}) = \Delta \psi_s^j(\bm{u}_{\bm{I}}, \bm{u}_{\bm{I}+\bm{1}_j}).
    \label{eq:prototadmor}
\end{equation}
To show that this condition equivalently leads to \eqref{eq:numcle}, we give a proof in \autoref{app:ddual}. This condition is due to Tadmor and beares his name \cite{tadmornumerical}. For reference, we provide the following formal definition. Let $\bm{\psi}_s := [\psi_s^1,...,\psi_s^d]^T$. 
\begin{dfn}[Tadmor condition]
    \label{def:tadmor}
    A symmetric and consistent numerical flux function $\bm{f}_h^j : \mathcal{U} \times \mathcal{U} \rightarrow \mathbb{R}^n$ for a conservation law \eqref{eq:cl} with an entropy pair $(s,\bm{\mathcal{F}}_s)$ with entropy variables $\bm{\eta}$ and entropy flux potential $\bm{\psi}_s$ is said to satisfy the Tadmor condition if:
    \begin{equation}
        \Delta \psi_s^j(\bm{u},\bm{v})=\bm{f}_h^j(\bm{u},\bm{v})^T\Delta\bm{\eta}(\bm{u},\bm{v}),
        \label{eq:tadcond}
    \end{equation}
    for all pairs $\bm{u},\bm{v} \in \mathcal{U}$.
\end{dfn}
Note that for any $\bm{u},\bm{v} \in \mathcal{U}$ the set of vectors $\mathcal{P}_s^j(\bm{u},\bm{v}) := \{\bm{w}\in\mathbb{R}^n : \Delta \psi_s^j(\bm{u},\bm{v}) - \bm{w}^T\Delta\bm{\eta}(\bm{u},\bm{v}) = 0\}$ is non-empty, as the case $\Delta\bm{\eta}(\bm{u},\bm{v}) = 0$ implies $\bm{u}=\bm{v}$ and thus $\Delta \psi_s^j(\bm{u},\bm{v}) = 0$, due to injectivity of the entropy variables. Non-emptiness of $\mathcal{P}_s^j(\bm{u},\bm{v})$ for any $\bm{u},\bm{v} \in \mathcal{U}$ is a necessary condition for the existence of a numerical flux satisfying the Tadmor condition.

\subsection{Secondary-structure-consistent discretization}
Our goal is to establish similar conditions on the numerical flux functions $\bm{f}_h^j$ used in \eqref{eq:disc} so that, given a general secondary structure, the discretization \eqref{eq:disc} also implies a consistent discretization of the general balance law \eqref{eq:clq} for $q$. This discrete balance law should be conservative in the same sense as \eqref{eq:disc} if the secondary structure is conservative and otherwise has a clearly defined conservative and non-conservative part. Letting $q_{\bm{I}} := q(\bm{u}_{\bm{I}})$, we will look for discrete secondary balance laws of the form:
\begin{align}
    \dudt{q_{\bm{I}}} &+\sum_{j=1}^d \frac{1}{\Delta x_j}\left(\mathcal{F}_{q,\bm{I}+\bm{1}_j/2}^j - \mathcal{F}_{q,\bm{I}-\bm{1}_j/2}^j\right) + \sum_{j=1}^d \frac{1}{2}\left(\frac{\left[ c_q^j \Delta \mathcal{G}_q^j \right]_{\bm{I}+\bm{1}_j/2}}{\Delta x_j} + \frac{\left[ c_q^j \Delta \mathcal{G}_q^j \right]_{\bm{I}-\bm{1}_j/2}}{\Delta x_j} \right) = 0.\label{eq:discq} 
\end{align}
This form is a consistent discretization of \eqref{eq:clq} and, as we will show, follows naturally from an extension of the approach taken by Tadmor for entropy \cite{tadmorentropy, tadmornumerical}. In particular, $\mathcal{F}_{q,\bm{I}+\bm{1}_j/2}^j := \mathcal{F}_{q,h}^j(\bm{u}_{\bm{I}},\bm{u}_{\bm{I}+\bm{1}_j})$ where $\mathcal{F}_{q,h} : \mathcal{U} \times \mathcal{U} \rightarrow \mathbb{R}$ is a consistent and symmetric numerical approximation to the conservative flux $\mathcal{F}_q$ of \eqref{eq:clq} between grid cells. Furthermore, $\left[ c_q^j \Delta \mathcal{G}_q^j \right]_{\bm{I}+\bm{1}_j/2} := \left[ c_q^j \Delta \mathcal{G}_q^j \right](\bm{u}_{\bm{I}},\bm{u}_{\bm{I}+\bm{1}_j})$ where $\left[ c_q^j \Delta \mathcal{G}_q^j \right] : \mathcal{U} \times \mathcal{U} \rightarrow \mathbb{R}$ is an anti-symmetric function that, when divided by the corresponding cell-width $\Delta x_j$, approximates the non-conservative term in \eqref{eq:clq} consistently between the cells. Here, anti-symmetric is meant in the sense that, for all $\bm{u},\bm{v}\in\mathcal{U}$:
\begin{equation*}
    \left[ c_q^j \Delta \mathcal{G}_q^j \right](\bm{u},\bm{v}) = - \left[ c_q^j \Delta \mathcal{G}_q^j \right](\bm{v},\bm{u}).
\end{equation*}
When deriving a generalization of \eqref{eq:prototadmor}, it will become clear why this anti-symmetry is necessary. As $\left[ c_q^j \Delta \mathcal{G}_q^j \right]$ often describes a form of physical work performed on or by the system, we will refer to $\left[ c_q^j \Delta \mathcal{G}_q^j \right]$ as the \textit{numerical work term}. For reasons that will become clear later, we do not fully specify the work term; we only specify its behavior. 

We will proceed in a similar fashion as for entropy to derive a generalization of the Tadmor condition \eqref{eq:tadcond}. Let $\bm{\xi}_{\bm{I}} := \bm{\xi}(\bm{u}_{\bm{I}})$, then a discrete generalized primal compatibility condition can be stated as follows:
\begin{align}
    &\bm{\xi}_{\bm{I}}^T\left(\bm{f}_{\bm{I}+\bm{1}_j/2}^j - \bm{f}_{\bm{I}-\bm{1}_j/2}^j \right)  \label{eq:numpcondq} \\
    & \qquad = \mathcal{F}_{q,\bm{I}+\bm{1}_j/2}^j - \mathcal{F}_{q,\bm{I}-\bm{1}_j/2}^j + \frac{1}{2}\left[ c_q^j \Delta \mathcal{G}_q^j \right]_{\bm{I}+\bm{1}_j/2} + \frac{1}{2}\left[ c_q^j \Delta \mathcal{G}_q^j \right]_{\bm{I}-\bm{1}_j/2}. \nonumber
\end{align}
The manipulations to obtain \eqref{eq:discq} from the discrete generalized primal compatibility condition are straightforward and mimic exactly the procedure at the differential level \eqref{eq:diffprimderiv}:
\begin{align*}
    0 &= \bm{\xi}_{\bm{I}}^T\left( \dudt{\bm{u}_{\bm{I}}} + \sum_{j=1}^d \frac{1}{\Delta x_j}\left(\bm{f}_{\bm{I}+\bm{1}_j/2}^j - \bm{f}_{\bm{I}-\bm{1}_j/2}^j \right)\right) \\
    &= \bm{\xi}_{\bm{I}}^T\dudt{\bm{u}_{\bm{I}}} + \sum_{j=1}^d \frac{1}{\Delta x_j} \bm{\xi}_{\bm{I}}^T\left(\bm{f}_{\bm{I}+\bm{1}_j/2}^j - \bm{f}_{\bm{I}-\bm{1}_j/2}^j \right) \\
    &= \dudt{q_{\bm{I}}} +\sum_{j=1}^d \frac{1}{\Delta x_j}\left(\mathcal{F}_{q,\bm{I}+\bm{1}_j/2}^j - \mathcal{F}_{q,\bm{I}-\bm{1}_j/2}^j\right) + \sum_{j=1}^d \frac{1}{2}\left(\frac{\left[ c_q^j \Delta \mathcal{G}_q^j \right]_{\bm{I}+\bm{1}_j/2}}{\Delta x_j} + \frac{\left[ c_q^j \Delta \mathcal{G}_q^j \right]_{\bm{I}-\bm{1}_j/2}}{\Delta x_j} \right). 
\end{align*}
However, the discrete generalized primal compatibility condition is cumbersome to work with and again requires the consideration of three grid points. As for entropy, the generalized dual compatibility condition provides a solution. Defining a conservative numerical $q$-flux as:
\begin{equation}
    \mathcal{F}_{q,h}(\bm{u}_{\bm{I}},\bm{u}_{\bm{I}+\bm{1}_j}) :=\overline{\bm{\xi}}(\bm{u}_{\bm{I}},\bm{u}_{\bm{I}+\bm{1}_j})^T\bm{f}_h^j(\bm{u}_{\bm{I}},\bm{u}_{\bm{I}+\bm{1}_j}) - \overline{\psi}_q^j(\bm{u}_{\bm{I}},\bm{u}_{\bm{I}+\bm{1}_j}),
    \label{eq:numqflux}
\end{equation}
we can define the following \emph{discrete generalized dual compatibility condition}:
\begin{equation}
    \Delta \psi_q^j(\bm{u}_{\bm{I}},\bm{u}_{\bm{I}+\bm{1}_j}) - \bm{f}_h^j(\bm{u}_{\bm{I}},\bm{u}_{\bm{I}+\bm{1}_j})^T \Delta \bm{\xi}(\bm{u}_{\bm{I}},\bm{u}_{\bm{I}+\bm{1}_j}) = \left[ c_q^j \Delta \mathcal{G}_q^j \right](\bm{u}_{\bm{I}},\bm{u}_{\bm{I}+\bm{1}_j}).
    \label{eq:numdcondq}
\end{equation}
Here, we see that the anti-symmetry of $\left[ c_q^j \Delta \mathcal{G}_q^j \right]$ is necessary. Specifically, both $\Delta \psi_q^j$ and $\Delta \bm{\xi}$ are anti-symmetric functions, while the flux $\bm{f}_h^j$ is assumed symmetric, hence the left side of \eqref{eq:numdcondq} is anti-symmetric. Equality then necessarily forces the numerical work term $\left[ c_q^j \Delta \mathcal{G}_q^j \right]$ to be anti-symmetric. Before we show that \eqref{eq:numdcondq} leads to \eqref{eq:discq}, we introduce some short notation and useful identities. When the arguments of $g$ in \eqref{eq:meananddiff} originate from the grid function $\bm{u}_{\bm{I}}$, it will be useful to define the following even more compact notation:
\begin{align*}
    \overline{g}_{\bm{I}+\bm{1}_j/2} := \overline{g}(\bm{u}_{\bm{I}}, \bm{u}_{\bm{I}+\bm{1}_j}), \quad \Delta g_{\bm{I}+\bm{1}_j/2} &:= \Delta g(\bm{u}_{\bm{I}}, \bm{u}_{\bm{I}+\bm{1}_j}).
\end{align*}
Two identities also hold between the $\overline{(\cdot)}$ and $\Delta(\cdot)$ operators, which we denote in the compact grid function notation, as that is the setting in which we will mainly use them:
\begin{gather}
    g_{\bm{I}} = \overline{g}_{\bm{I}\pm\bm{1}_j/2} \mp \frac{1}{2}\Delta g_{\bm{I}\pm\bm{1}_j/2}, \label{eq:usefulid} \\
    \frac{1}{2}\Delta g_{\bm{I}+\bm{1}_j/2}+\frac{1}{2}\Delta g_{\bm{I}-\bm{1}_j/2} = \overline{g}_{\bm{I}+\bm{1}_j/2} - \overline{g}_{\bm{I}-\bm{1}_j/2}.
    \label{eq:commdifavg}
\end{gather}
We are now ready to provide the following derivation to show that \eqref{eq:numdcondq} leads to \eqref{eq:discq}, which is one of our main contributions in this article:
\begin{align*}
    0 &= \bm{\xi}_{\bm{I}}^T\left( \dudt{\bm{u}_{\bm{I}}} + \sum_{j=1}^d \frac{1}{\Delta x_j}\left(\bm{f}_{\bm{I}+\bm{1}_j/2}^j - \bm{f}_{\bm{I}-\bm{1}_j/2}^j \right)\right) \\
    &= \bm{\xi}_{\bm{I}}^T\dudt{\bm{u}_{\bm{I}}} + \sum_{j=1}^d \frac{1}{\Delta x_j} \bm{\xi}_{\bm{I}}^T\left(\bm{f}_{\bm{I}+\bm{1}_j/2}^j - \bm{f}_{\bm{I}-\bm{1}_j/2}^j \right) \\
    &\overset{\eqref{eq:usefulid}}{=} \dudt{q_{\bm{I}}} + \sum_{j=1}^d \frac{1}{\Delta x_j} \left(\overline{\bm{\xi}}_{\bm{I}+\bm{1}_j/2}^T\bm{f}_{\bm{I}+\bm{1}_j/2}^j - \overline{\bm{\xi}}_{\bm{I}-\bm{1}_j/2}^T\bm{f}_{\bm{I}-\bm{1}_j/2}^j\right) \\
    &\qquad \qquad \qquad - \sum_{j=1}^d \frac{1}{\Delta x_j} \frac{1}{2}\left(\left(\bm{f}_{\bm{I}+\bm{1}_j/2}^j\right)^T\Delta \bm{\xi}_{\bm{I}+\bm{1}_j/2} + \left(\bm{f}_{\bm{I} - \bm{1}_j/2}^j\right)^T\Delta \bm{\xi}_{\bm{I}-\bm{1}_j/2}  \right) \\
    &\overset{\eqref{eq:numqflux}}{=} \dudt{q_{\bm{I}}} + \sum_{j=1}^d \frac{1}{\Delta x_j} \left(\mathcal{F}_{q,\bm{I}+\bm{1}_j/2}^j - \mathcal{F}_{q,\bm{I}-\bm{1}_j/2}^j \right) + \sum_{j=1}^d \frac{1}{\Delta x_j}\left(\overline{\psi}_{q, \bm{I}+\bm{1}_j/2}^j - \overline{\psi}_{q,\bm{I}-\bm{1}_j/2}^j\right) \\
    &\qquad \qquad \qquad - \sum_{j=1}^d \frac{1}{\Delta x_j} \frac{1}{2}\left(\left(\bm{f}_{\bm{I}+\bm{1}_j/2}^j\right)^T\Delta \bm{\xi}_{\bm{I}+\bm{1}_j/2} + \left(\bm{f}_{\bm{I}-\bm{1}_j/2}^j\right)^T\Delta \bm{\xi}_{\bm{I}-\bm{1}_j/2}  \right) \\
    &\overset{\eqref{eq:commdifavg}}{=} \dudt{q_{\bm{I}}} + \sum_{j=1}^d \frac{1}{\Delta x_j} \left(\mathcal{F}_{q,\bm{I}+\bm{1}_j/2}^j - \mathcal{F}_{q,\bm{I}-\bm{1}_j/2}^j \right) + \sum_{j=1}^d \frac{1}{\Delta x_j}\frac{1}{2}\left(\Delta{\psi}_{q, \bm{I}+\bm{1}_j/2}^j + \Delta{\psi}_{q,\bm{I}-\bm{1}_j/2}^j\right) \\
    &\qquad \qquad \qquad - \sum_{j=1}^d \frac{1}{\Delta x_j} \frac{1}{2}\left(\left(\bm{f}_{\bm{I}+\bm{1}_j/2}^j\right)^T\Delta \bm{\xi}_{\bm{I}+\bm{1}_j/2} + \left(\bm{f}_{\bm{I}-\bm{1}_j/2}^j\right)^T\Delta \bm{\xi}_{\bm{I}-\bm{1}_j/2}  \right) \\
    &\overset{\eqref{eq:numdcondq}}{=} \dudt{q_{\bm{I}}} + \sum_{j=1}^d \frac{1}{\Delta x_j} \left(\mathcal{F}_{q,\bm{I}+\bm{1}_j/2}^j - \mathcal{F}_{q,\bm{I}-\bm{1}_j/2}^j \right) + \sum_{j=1}^d \frac{1}{2}\left(\frac{\left[ c_q^j \Delta \mathcal{G}_q^j \right]_{\bm{I}+\bm{1}_j/2}}{\Delta x_j} + \frac{\left[ c_q^j \Delta \mathcal{G}_q^j \right]_{\bm{I}-\bm{1}_j/2}}{\Delta x_j} \right), %\\ & \qquad \qquad \qquad 
\end{align*}
where in the third equality the identity \eqref{eq:usefulid} was used, in the fourth equality definition \eqref{eq:numqflux}, in the fifth equality identity \eqref{eq:commdifavg} and in the final equality the discrete generalized dual compatibility condition \eqref{eq:numdcondq}. The derivation again closely mimics the differential setting, where the differences in the residual function $\Delta \psi_q^j$ now are used to correct the $\left(\bm{f}_{\bm{I}+\bm{1}_j/2}^j\right)^T\Delta \bm{\xi}_{\bm{I}+\bm{1}_j/2}$ terms to the correct numerical work terms instead of canceling them out. Unlike for entropy, however, we are not yet ready to define a generalization of the Tadmor condition at this point. We must still deal with possible degeneracies that can occur in \eqref{eq:numdcondq} due to the lack of a convexity requirement on $q$. 

\subsection{Generalized Tadmor condition and weak discrete null-consistency}\label{sec:discnull}
Inspecting the discrete dual compatibility condition \eqref{eq:numdcondq}, we can note the following. Evaluating \eqref{eq:numdcondq} at some pair $\bm{u},\bm{v}\in \mathcal{U}$ gives a relation between the numerical work term $[c_q^j\Delta \mathcal{G}_q^j]$, change in $\psi_q^h$ and $\bm{\xi}$ between $\bm{u}, \bm{v}\in\mathbb{R}^n$ and the form of the numerical flux $\bm{f}_h^j(\bm{u},\bm{v})$. In this relation the form of the numerical flux $\bm{f}_h^j$ and of the numerical work term $\left[c_q^j \Delta \mathcal{G}_q^j \right]$ are degrees of freedom that can be determined to satisfy the discrete dual compatibility in the respective $\bm{v}-\bm{u}$ direction. 

However, the fact that for general secondary structures $q$ is not necessarily a convex function, puts constraints on the form of the numerical work term $\left[ c_q^j \Delta \mathcal{G}_q^j \right]$. Namely, where the entropy variables $\bm{\eta}$ are injective due to the convexity of the entropy function $s$ as mentioned in \autoref{sec:entanalysis}, $\bm{\xi}$ can fail to be injective. This is the case as we assume there is no such convexity constraint on $q$. Consequently, there can exist pairs of points $\bm{u},\bm{v}\in \mathcal{U}$ so that $\Delta \bm{\xi}(\bm{u},\bm{v}) = 0$ while $\bm{u}\neq\bm{v}$. In such a case, the contribution of the numerical flux to the discrete dual compatibility condition \eqref{eq:numdcondq} drops out like in \eqref{eq:solvacond}. When this occurs, equality should still hold in \eqref{eq:numdcondq} if we want our discretization \eqref{eq:disc} to satisfy \eqref{eq:discq}. Therefore, when $\Delta \bm{\xi}_{\bm{I}+\bm{1}_j/2} = 0$, we must require:
\begin{equation}
    \Delta \psi_{q,\bm{I}+\bm{1}_j/2}^j = \left[ c_q^j \Delta \mathcal{G}_q^j \right]_{\bm{I}+\bm{1}_j/2}.
    \label{eq:dxizero}
\end{equation}
This is a discrete analogue to the null-consistency property \eqref{eq:solvacond} of general secondary structures for the non-conservative numerical work term. To obtain a scheme that is always consistent with \eqref{eq:discq}, we \emph{must} require this discrete null-consistency as an additional constraint. As we will see later, it will also be needed to consider a stronger form of discrete null-consistency, hence we refer to this form as \textit{weak discrete null-consistency}. We provide the following more precise definition.
\begin{dfn}[Weak discrete null-consistency]
    \label{def:discnull}
    An anti-symmetric and consistent numerical work term $\left[ c_q^j \Delta \mathcal{G}_q^j\right] : \mathcal{U}\times\mathcal{U}\rightarrow \mathbb{R}$ for a discretization of a conservation law \eqref{eq:cl} with a secondary structure $(q,\bm{\mathcal{F}}_q,\bm{\mathcal{G}}_q,\bm{c}_q)$ with gradient $\bm{\xi}$ and residual function $\bm{\psi}_q$ is said to satisfy the weak discrete null-consistency property if the following implication holds:
    \begin{equation}
        \Delta \bm{\xi}(\bm{u},\bm{v}) = 0 \implies \Delta \psi_q^j(\bm{u},\bm{v}) = \left[ c_q^j \Delta \mathcal{G}_q^j\right](\bm{u},\bm{v}), 
        \label{eq:weaknullcons}
    \end{equation}
    for all $\bm{u},\bm{v} \in \mathcal{U}$.
\end{dfn}

When a discretely null-consistent numerical work term $\left[c_q^j \Delta \mathcal{G}_q^j\right]$ is found, then for any $\bm{u},\bm{v}\in \mathcal{U}$ the set $\mathcal{P}_q^j(\bm{u},\bm{v}) := \{\bm{w}\in\mathbb{R}^n : \Delta \psi_q^j(\bm{u},\bm{v}) - \bm{w}^T\Delta\bm{\xi}(\bm{u},\bm{v}) = \left[c_q^j \Delta \mathcal{G}_q^j\right](\bm{u},\bm{v})\}$ is never empty, as when $\Delta \bm{\xi}(\bm{u},\bm{v})=0$ we have $\mathcal{P}_q^j(\bm{u},\bm{v}) = \mathbb{R}^n$ and otherwise $\mathcal{P}_q^j(\bm{u},\bm{v})$ is simply a hyper-plane. The non-emptiness of $\mathcal{P}_q^j(\bm{u},\bm{v})$ for all $\bm{u},\bm{v}\in \mathcal{U}$ is a necessary condition for the existence of a flux $\bm{f}_h^j$ that always satisfies \eqref{eq:numdcondq}. We are now ready to generalize the Tadmor condition to general secondary structures.
\begin{dfn}[Generalized Tadmor condition]
    \label{def:gentad}
    A symmetric and consistent numerical flux function $\bm{f}_h^j : \mathcal{U} \times \mathcal{U} \rightarrow \mathbb{R}^n$ for a conservation law \eqref{eq:cl} with a secondary structure $(q,\bm{\mathcal{F}}_q,\bm{\mathcal{G}}_q,\bm{c}_q)$ with $q$-gradient $\bm{\xi}$ and residual function $\bm{\psi}_q$ is said to satisfy the generalized Tadmor condition if:
    \begin{equation}
        \Delta\psi_q^j(\bm{u},\bm{v}) -\bm{f}_h^j(\bm{u},\bm{v})^T\Delta \bm{\xi}(\bm{u},\bm{v}) = \left[ c_q^j \Delta \mathcal{G}_q^j\right](\bm{u},\bm{v}), \quad \forall \bm{u},\bm{v} \in \mathcal{U},
        \label{eq:gentadcond}
    \end{equation}
    where $\left[ c_q^j \Delta \mathcal{G}_q^j\right] : \mathcal{U} \times \mathcal{U} \rightarrow \mathbb{R}$ is an anti-symmetric and consistent numerical work term satisfying weak discrete null-consistency \eqref{eq:weaknullcons}.
\end{dfn}
This condition reduces to the standard Tadmor condition, \autoref{def:tadmor}, if the secondary structure is an entropy pair. We have proven the following theorem.
\begin{thm}[Discrete balance law]\label{thm:balance}
    Consider a conservation law of the form \eqref{eq:cl} with a secondary structure $(q,\bm{\mathcal{F}}_q,\bm{\mathcal{G}}_q,\bm{c}_q)$. Let a numerical scheme of the form \eqref{eq:disc} be given to approximate \eqref{eq:cl}. Denote the symmetric and consistent numerical flux functions $\bm{f}_h^j : \mathcal{U} \times \mathcal{U} \rightarrow \mathbb{R}^n$, $j=1,...,d$. The scheme satisfies the additional discrete balance law \eqref{eq:discq} with given anti-symmetric and consistent numerical work terms $\left[ c_q^j \Delta \mathcal{G}_q^j\right] : \mathcal{U}\times\mathcal{U}\rightarrow \mathbb{R}$, $j=1,...,d$, satisfying the weak discrete null-consistency property \eqref{eq:weaknullcons}, if the numerical flux functions $\bm{f}_h^j$ satisfy the associated generalized Tadmor condition \eqref{eq:gentadcond}.
\end{thm}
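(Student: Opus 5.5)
The plan is to contract the semi-discrete scheme \eqref{eq:disc} with the local $q$-gradient $\bm{\xi}_{\bm{I}}$ and rewrite every term until the discrete balance law \eqref{eq:discq} appears, mirroring the continuous derivation \eqref{eq:diffprimderiv}. The conservative numerical $q$-flux is fixed in advance by \eqref{eq:numqflux}, $\mathcal{F}_{q,h}^j = \overline{\bm{\xi}}^T\bm{f}_h^j - \overline{\psi}_q^j$. It is symmetric because $\bm{f}_h^j$ is symmetric and averages are symmetric. It is consistent because at $\bm{u}=\bm{v}$ it reduces to $\bm{\xi}^T\bm{f}^j - \psi_q^j = \mathcal{F}_q^j$ by \eqref{eq:potq}. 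So \eqref{eq:discq} is a legitimate discrete balance law of the required form.

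The steps, in order, are as follows.
\begin{enumerate}
\item Use the chain rule in time, $\bm{\xi}_{\bm{I}}^T \dudt{\bm{u}_{\bm{I}}} = \dudt{q_{\bm{I}}}$. This is valid since $q\in C^2$ and $\bm{u}_{\bm{I}}(t)$ is differentiable.
\item In each direction $j$, split $\bm{\xi}_{\bm{I}}$ with identity \eqref{eq:usefulid}: write $\bm{\xi}_{\bm{I}} = \overline{\bm{\xi}}_{\bm{I}+\bm{1}_j/2} - \tfrac12\Delta\bm{\xi}_{\bm{I}+\bm{1}_j/2}$ against the right flux and $\bm{\xi}_{\bm{I}} = \overline{\bm{\xi}}_{\bm{I}-\bm{1}_j/2} + \tfrac12\Delta\bm{\xi}_{\bm{I}-\bm{1}_j/2}$ against the left flux. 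This gives a difference of $\overline{\bm{\xi}}^T\bm{f}_h^j$ across the cell, minus half the sum of $(\bm{f}_h^j)^T\Delta\bm{\xi}$ on both faces.
\item Substitute $\overline{\bm{\xi}}^T\bm{f}_h^j = \mathcal{F}_{q,h}^j + \overline{\psi}_q^j$ from \eqref{eq:numqflux}.
\item Convert the difference of averages $\overline{\psi}_{q,\bm{I}+\bm{1}_j/2}^j - \overline{\psi}_{q,\bm{I}-\bm{1}_j/2}^j$ into $\tfrac12(\Delta\psi_{q,\bm{I}+\bm{1}_j/2}^j + \Delta\psi_{q,\bm{I}-\bm{1}_j/2}^j)$ via \eqref{eq:commdifavg}.
\item Group the terms face by face into $\tfrac12(\Delta\psi_q^j - (\bm{f}_h^j)^T\Delta\bm{\xi})$ on each face, and replace each group by $\tfrac12[c_q^j\Delta\mathcal{G}_q^j]$ using the generalized Tadmor condition \eqref{eq:gentadcond}. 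This condition holds for all pairs, in particular for $(\bm{u}_{\bm{I}-\bm{1}_j},\bm{u}_{\bm{I}})$ and $(\bm{u}_{\bm{I}},\bm{u}_{\bm{I}+\bm{1}_j})$.
\end{enumerate}
Dividing by $\Delta x_j$ and summing over $j$ then yields exactly \eqref{eq:discq}.

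The only step requiring care is orientation at the left face. The term arising there is $\Delta\psi_q^j(\bm{u}_{\bm{I}-\bm{1}_j},\bm{u}_{\bm{I}})$ with flux $\bm{f}_h^j(\bm{u}_{\bm{I}-\bm{1}_j},\bm{u}_{\bm{I}})$, so \eqref{eq:gentadcond} must be applied with that argument order. Consistency with the sign in \eqref{eq:discq} then rests on the anti-symmetry of the work term together with the symmetry of $\bm{f}_h^j$, and I would check these signs explicitly.

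Weak discrete null-consistency \eqref{eq:weaknullcons} enters only implicitly. It is what makes \eqref{eq:gentadcond} satisfiable at pairs with $\Delta\bm{\xi}=0$, $\bm{u}\neq\bm{v}$. Since the theorem assumes $\bm{f}_h^j$ satisfies \eqref{eq:gentadcond}, no separate argument is needed beyond noting that the hypotheses are compatible.
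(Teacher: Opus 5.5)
Your proposal is correct and follows the paper's own derivation step for step: you contract \eqref{eq:disc} with $\bm{\xi}_{\bm{I}}$, split via \eqref{eq:usefulid}, insert \eqref{eq:numqflux}, convert the difference of averages via \eqref{eq:commdifavg}, and apply \eqref{eq:gentadcond} on each face. One small correction: the left face needs neither anti-symmetry of the work term nor symmetry of $\bm{f}_h^j$, because applying \eqref{eq:gentadcond} to the pair $(\bm{u}_{\bm{I}-\bm{1}_j},\bm{u}_{\bm{I}})$ in its natural order directly gives $+\tfrac12\left[c_q^j\Delta\mathcal{G}_q^j\right]_{\bm{I}-\bm{1}_j/2}$, which is exactly the sign in \eqref{eq:discq}.
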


\begin{rmk}[Satisfying discrete null-consistency]
    Although at the differential level, null-consistency is automatically satisfied, the discrete analogue \eqref{eq:weaknullcons} seems to impose a strong condition on the numerical work term. This is the case as \eqref{eq:weaknullcons} requires non-local\footnote{i.e.\ between distinct points $\bm{u},\bm{v}\in\mathcal{U}$} relations to be satisfied by the functions $\bm{\xi}$, $\psi_q^j$, and the functions involved in constructing $[c_q^j\Delta \mathcal{G}_q^j]$. However, these functions are only locally related to each other using the dual compatibility condition \eqref{eq:dcondq}. It is for this reason that we have, so far, not fully specified $\left[c_q^j \Delta \mathcal{G}_q^j \right]$ as this provides maximal flexibility in satisfying \eqref{eq:weaknullcons}. It is interesting to note that for the classical Tadmor condition for entropy \eqref{eq:tadcond}, it is possible to obtain such a non-local relation from a local relation between $\psi_s^j$ and $\bm{\eta}$. Specifically, the entropy-conserving midpoint flux \cite[Equation 4.6a]{tadmornumerical} turns the local dual compatibility condition for entropy functions into a non-local Tadmor condition using a path-independent integral. So far, however, we have not been able to apply a similar approach to constructing weakly null-consistent numerical work terms.
\end{rmk}

\subsection{The method of jump expansions using discrete gradients}\label{sec:jumpexps}
Having derived a generalized Tadmor condition, we now propose and analyze a method for finding solutions given a secondary structure. The usual method for solving Tadmor's entropy conservation condition is the method of jump expansions \cite{ranochathesis, fjordholmenergy, chandrashekarkinetic, ranochacomparison}. The method is systematic and applicable to any entropy-conserving system of conservation laws. For these reasons, we will generalize this approach to the generalized Tadmor condition and analyze some of its properties in this setting. 

In our applications, however, a problem immediately arises. The method of jump expansions requires explicit knowledge of all involved functions $\bm{\psi}_s$ and $\bm{\eta}$. In our application of interest, compressible flow of real gases, these functions $\bm{\psi}_q$ and $\bm{\xi}$ and the additional functions $\bm{c}_q$ and $\bm{\mathcal{G}}_q$ may only be defined up to an arbitrary function. For us, this arbitrary function is specified by a choice of thermodynamic model, i.e.\ an equation of state, as will be explained in \autoref{sec:fluxeszz}. To solve this, we will introduce discrete gradient operators \cite{gonzaleztime, itohhamiltonian, mclachlangeometric}, a discretization method that enables us to apply jump expansions to secondary structures defined only up to an arbitrary function.

The method of jump expansions finds a numerical flux by deriving an $n \times n$ linear system for which a solution satisfies the scalar Tadmor condition \eqref{eq:tadcond} for any fixed $\bm{u},\bm{v} \in \mathcal{U}$. The linear system is constructed by expanding the jumps involved in the Tadmor condition with respect to a common set of auxiliary variables $\bm{z} : \mathcal{U} \rightarrow \mathbb{R}^n$ obtained from a change-of-variables (sufficiently smooth and bijective with smooth inverse). Discrete gradient operators can be very helpful in these expansions. We will first focus on constructing these expansions, then analyze the properties of the resulting linear system for general secondary structures. This analysis is mainly based on functions defined on $\mathcal{U}$, without any spatial considerations related to the physical domain. Hence, for better readability, we will suppress the superscript $(\cdot)^j$ denoting physical coordinate directions throughout this subsection. 

\subsubsection{Jump expansions}
For the method of jump expansions, it is crucial that the Tadmor condition consists of a linear combination of $\Delta (\cdot)$ expressions with coefficients consisting of the numerical flux components. To ensure the same approach works in our generalized setting we assume that the numerical work term takes the form $[c_q \Delta \mathcal{G}_q](\bm{u},\bm{v}) = c_q(\bm{u},\bm{v})\Delta \mathcal{G}_q(\bm{u},\bm{v})$, where $c_q(\bm{u},\bm{v})$ is any symmetric and consistent average of $c_q$. However, more generally, the approach also works for a sum of multiple jumps. If the numerical work term is not a sum of $\Delta (\cdot)$ terms the analysis and method described in what follows does not apply. 

For the functions involved in the generalized Tadmor condition, jump expansions generally take the following form:
\begin{equation*}
    \Delta g(\bm{u},\bm{v}) = \sum_{i=1}^n a_{g,i} (\bm{u},\bm{v})\Delta z_i(\bm{u},\bm{v}), \\
\end{equation*}
where $g \in \{\psi_q, \xi_1, ..., \xi_n, \mathcal{G}_q\}$ and $a_{g,i} : \mathcal{U} \times \mathcal{U} \rightarrow \mathbb{R}$ are expansion coefficients, constructed by suitable applications of the mean value theorem (cf. \cite{ranochathesis}). This approach of constructing expansion coefficients is particularly effective when jumps in univariate functions of $z_i$ for some $i$ can be isolated during the expansion process. Moreover, this approach is not possible when functions are not explicitly or only partially defined, as in our case. By partially defined we mean for example that $g(\bm{z}) = 3z_1^2 + 2z_2 + g_u(\bm{z})$ where $g_u$ is some arbitrary user-supplied function. With some algebra the jumps $\Delta (3z_1^2)$ and $\Delta (2z_2)$ are easily expanded in terms of $\Delta z_1$ and $\Delta z_2$, however the user-supplied function $g_u$ cannot be expanded. 

To still apply jump expansions in this partially defined and/or multivariate setting, we propose using discrete gradient operators \cite{gonzaleztime, itohhamiltonian, mclachlangeometric}, which are gradient discretization methods satisfying the following definition \cite{gonzaleztime, mclachlangeometric}. Let $\mathcal{P}$ be an open subset of $\mathbb{R}^m$.
\begin{dfn}[Discrete gradient functions]
    \label{def:discgrad}
    A discrete gradient for a differentiable function $g : \mathcal{P} \rightarrow \mathbb{R}$ is a continuous mapping $\widetilde{\nabla} g : \mathcal{P} \times \mathcal{P} \rightarrow \mathbb{R}^m$ with the following properties.
    \begin{enumerate}
        \item Discrete gradient, $\widetilde{\nabla} g(\bm{p}_1, \bm{p}_2)^T (\bm{p}_2-\bm{p}_1) =\Delta g(\bm{p}_1, \bm{p}_2) \quad   \forall \bm{p}_1,\bm{p}_2 \in \mathcal{P}$.
        \item Consistency, $\widetilde{\nabla} g(\bm{p},\bm{p}) = \nabla g(\bm{p}) \quad \forall \bm{p} \in \mathcal{P}$.
    \end{enumerate}
\end{dfn}
Note that, according to the discrete gradient property in \autoref{def:discgrad}, the components of discrete gradients can precisely be used as expansion coefficients in the jump expansions. For our convenience we denote $\widetilde{\nabla}_{\bm{z}} g(\bm{u},\bm{v}) := \widetilde{\nabla}g(\bm{z}(\bm{u}),\bm{z}(\bm{v}))$ when the gradient is with respect to $\bm{z}$. We can thus use discrete gradients with respect to $\bm{z}$ to construct our expansions:
\begin{align}
    \Delta \psi_q(\bm{u},\bm{v}) &= \widetilde{\nabla}_{\bm{z}}\psi_q(\bm{u},\bm{v})^T\Delta \bm{z}(\bm{u},\bm{v}), \nonumber \\
    \Delta \bm{\xi}(\bm{u},\bm{v}) &= \widetilde{\nabla}_{\bm{z}}\bm{\xi}(\bm{u},\bm{v})^T\Delta \bm{z}(\bm{u},\bm{v}), \label{eq:dgexps}\\
    \Delta \mathcal{G}_q(\bm{u},\bm{v}) &= \widetilde{\nabla}_{\bm{z}}\mathcal{G}_q(\bm{u},\bm{v})^T\Delta \bm{z}(\bm{u},\bm{v}). \nonumber
\end{align}
Here $\widetilde{\nabla}_{\bm{z}}\bm{\xi}(\bm{u},\bm{v}) \in \mathbb{R}^{n\times n}$ contains the discrete gradient of $\xi_i$ as its $i$-th column. Discrete gradient operators are operators that, given any arbitrary sufficiently smooth function $g$, produce a valid discrete gradient function $\widetilde{\nabla}g$. We will see that this property lets us define, e.g., entropy-conserving fluxes for general equations of state. For $n>1$, discrete gradient operators are not unique \cite{mclachlangeometric}, so we list a number of classical and/or practically useful examples in \autoref{app:dgs}. 

\subsubsection{Linear system}
We will now focus on the construction and properties of the linear system. By substituting the discrete gradient expansions \eqref{eq:dgexps} into the generalized Tadmor condition \eqref{eq:gentadcond}, we obtain the following expression:
\begin{equation*}
    \left(\widetilde{\nabla}_{\bm{z}}\psi_q(\bm{u},\bm{v}) - c_q(\bm{u},\bm{v})\widetilde{\nabla}_{\bm{z}}\mathcal{G}_q(\bm{u},\bm{v}) -   \widetilde{\nabla}_{\bm{z}}\bm{\xi}(\bm{u},\bm{v})\bm{f}_h(\bm{u},\bm{\bm{v}})\right)^T \Delta \bm{z}(\bm{u},\bm{v}) = 0.
\end{equation*}
A linear system that can be solved for a numerical flux can then be obtained by requiring:
\begin{equation}
    \widetilde{\nabla}_{\bm{z}}\psi_q(\bm{u},\bm{v}) - c_q(\bm{u},\bm{v})\widetilde{\nabla}_{\bm{z}}\mathcal{G}_q(\bm{u},\bm{v}) -   \widetilde{\nabla}_{\bm{z}}\bm{\xi}(\bm{u},\bm{v})\bm{f}_h(\bm{u},\bm{\bm{v}}) = 0. \label{eq:jumpexpsys}
\end{equation}
This system and, if they exist, its solutions, depend on the choice of auxiliary variables $\bm{z}(\bm{u})$ through the gradients and the specific form of the discrete gradient functions. Thus, different numerical fluxes can potentially be obtained for different expansion variables $\bm{z}$ and discrete gradient functions \cite{chandrashekarkinetic, ranochacomparison}. In the case of entropy pairs, by consistency of the discrete gradient, we know that at $\bm{v}=\bm{u}$ a unique solution exists (the differential flux $\bm{f}(\bm{u})$) since $\nabla_{\bm{u}} \bm{z}(\bm{u})$ and $\nabla_{\bm{u}}\bm{\eta}(\bm{u})$ are invertible. By continuity of the determinant and discrete gradients, this argument extends to $\bm{v}$ for which $||\bm{u}-\bm{v}||$ is sufficiently small. Such an argument cannot be made for general secondary structures as $\nabla_{\bm{u}} \bm{\xi}(\bm{u})$ can generally not be assumed to be non-singular. We should therefore not expect to recover unique fluxes from a single generalized Tadmor condition, but rather to obtain general solution sets. From these solution sets, numerical fluxes can be chosen that satisfy other desirable properties or design criteria.

\subsubsection{Strong discrete null-consistency}
For fixed $\bm{u},\bm{v} \in \mathcal{U}$, the system in \eqref{eq:jumpexpsys} has at least one solution if $\widetilde{\nabla}_{\bm{z}}\psi_q(\bm{u},\bm{v}) - c_q(\bm{u},\bm{v})\widetilde{\nabla}_{\bm{z}}\mathcal{G}_q(\bm{u},\bm{v}) \in \text{col}(\widetilde{\nabla}_{\bm{z}}\bm{\xi}(\bm{u},\bm{v}))$. This condition is, in fact, a stronger form of discrete null-consistency as is given in definition \eqref{def:discnull}. To see this, we note that a vector is an element of $\text{col}(\widetilde{\nabla}_{\bm{z}}\bm{\xi}(\bm{u},\bm{v}))$ if and only if it is orthogonal to any element of $\text{ker}(\widetilde{\nabla}_{\bm{z}}\bm{\xi}(\bm{u},\bm{v})^T)$. Thus for existence of a solution at $\bm{u},\bm{v}\in\mathcal{U}$, we require that for all $\delta \bm{z} \in \ker(\widetilde{\nabla}_{\bm{z}}\bm{\xi}(\bm{u},\bm{v})^T)$:
\begin{equation}
    \widetilde{\nabla}_{\bm{z}} \psi_q(\bm{u},\bm{v})^T\delta \bm{z} = c_q(\bm{u},\bm{v})\widetilde{\nabla}_{\bm{z}} \mathcal{G}_q(\bm{u},\bm{v})^T \delta \bm{z},
    \label{eq:strongdiscnull}
\end{equation}
must hold. From this relation, \eqref{eq:solvacond} can immediately be recognized. In contrast to the scalar condition for weak discrete null-consistency, this is a full discrete vector analogue of \eqref{eq:solvacond}. This \textit{strong discrete null-consistency} property implies weak discrete null-consistency. Assume $\bm{u},\bm{v}\in\mathcal{U}$ are such that $\Delta \bm{\xi}(\bm{u},\bm{v}) = 0$, then, by the discrete gradient property of discrete gradient functions:
\begin{equation*}
    0 =\Delta \bm{\xi}(\bm{u},\bm{v}) = \widetilde{\nabla}_{\bm{z}}\bm{\xi}(\bm{u},\bm{v})^T \Delta \bm{z}(\bm{u},\bm{v}),
\end{equation*}
so $\Delta \bm{z}(\bm{u},\bm{v}) \in \ker(\widetilde{\nabla}_{\bm{z}}\bm{\xi}(\bm{u},\bm{v})^T)$. Hence, by strong discrete null-consistency:
\begin{align*}
    \Delta \psi_q(\bm{u},\bm{v}) &= \widetilde{\nabla}_{\bm{z}} \psi_q(\bm{u},\bm{v})^T\Delta \bm{z}(\bm{u},\bm{v}) \\
    &= c_q(\bm{u},\bm{v})\widetilde{\nabla}_{\bm{z}} \mathcal{G}_q(\bm{u},\bm{v})^T \Delta \bm{z}(\bm{u},\bm{v}) = c_q(\bm{u},\bm{v}) \Delta \mathcal{G}_q(\bm{u},\bm{v}),
\end{align*}
and so the implication of weak null-consistency holds. The extra constraints that need to be satisfied in addition to weak discrete null-consistency are a consequence of the fact that we are now solving a linear system \eqref{eq:jumpexpsys} instead of the scalar generalized Tadmor condition \eqref{eq:gentadcond}. 

\subsubsection{Flux symmetry and consistency}
If they exist, solutions to \eqref{eq:jumpexpsys} satisfy a certain notion of symmetry. In particular, if symmetric discrete gradients (i.e.\ $\widetilde{\nabla} g(\bm{x}, \bm{y}) = \widetilde{\nabla} g(\bm{y}, \bm{x})$) are used in \eqref{eq:jumpexpsys}, then the set of solutions to \eqref{eq:jumpexpsys} is invariant under exchange of $\bm{u}$ and $\bm{v}$. This holds since the linear system itself is invariant to such an exchange. We also have a notion of consistency. Take $\bm{u}=\bm{v}$, then by consistency of the discrete gradient functions:
\begin{align*}
    0 &= \widetilde{\nabla}_{\bm{z}}\psi_q(\bm{u},\bm{u}) - c_q(\bm{u},\bm{u})\widetilde{\nabla}_{\bm{z}}\mathcal{G}_q(\bm{u},\bm{u}) -   \widetilde{\nabla}_{\bm{z}}\bm{\xi}(\bm{u},\bm{u})\bm{f}_h(\bm{u},\bm{u}) \\
    &= \nabla_{\bm{z}}\psi_q(\bm{u}) - c_q(\bm{u}) \nabla_{\bm{z}} \mathcal{G}_q(\bm{u}) - \nabla_{\bm{z}} \bm{\xi}(\bm{u}) \bm{f}_h(\bm{u},\bm{u}) \\
    &= \nabla_{\bm{z}} \bm{u}(\bm{u})\left(\nabla_{\bm{u}}\psi_q(\bm{u}) - c_q(\bm{u}) \nabla_{\bm{u}} \mathcal{G}_q(\bm{u}) - \nabla_{\bm{u}} \bm{\xi}(\bm{u}) \bm{f}_h(\bm{u},\bm{u})\right) \\
    &= \nabla_{\bm{z}}\bm{u}(\bm{u})\left(\nabla_{\bm{u}} \bm{\xi}(\bm{u})(\bm{f}(\bm{u})- \bm{f}_h(\bm{u},\bm{u}))\right),
\end{align*}
since $\nabla_{\bm{z}} \bm{u}(\bm{u})$ is invertible, this hold if and only if $\nabla_{\bm{u}} \bm{\xi}(\bm{u})(\bm{f}(\bm{u})- \bm{f}_h(\bm{u},\bm{u})) = 0$. Because $\nabla_{\bm{u}} \bm{\xi}$ is symmetric, this implies that the flux $\bm{f}_h$ is consistent at $\bm{u}$ within the subspace defined by $\text{col}(\nabla_{\bm{u}} \bm{\xi}(\bm{u}))$, since:
\begin{equation*}
    \nabla_{\bm{u}} \bm{\xi}(\bm{u})^T(\bm{f}(\bm{u})- \bm{f}_h(\bm{u},\bm{u})) = 0,
\end{equation*}
and thus the components of $\bm{f}_h(\bm{u},\bm{u})$ and $\bm{f}(\bm{u})$ along all $\nabla_{\bm{u}} \xi_i(\bm{u})$ agree. However, the generalized Tadmor condition does not ensure consistency outside of $\text{col}(\nabla_{\bm{u}} \bm{\xi}(\bm{u}))$. Instead, consistency outside of $\text{col}(\nabla_{\bm{u}} \bm{\xi}(\bm{u}))$ must be ensured by proper design choices; we will see an example of this in the following section. Once the proper design choices outside of $\text{col}(\nabla_{\bm{u}} \bm{\xi}(\bm{u}))$ are made, satisfaction of the generalized Tadmor condition then ensures the rest.
\section{Application: numerical fluxes for real gases}\label{sec:fluxeszz}
In this section, we apply the discrete framework developed in the previous section to construct an entropy- and kinetic-energy-preserving numerical flux function, given in equation \eqref{eq:numflux}, for our application of interest: the compressible flow of real gases with arbitrary equations of state. We also briefly introduce the necessary concepts from real-gas dynamics. 

\subsection{Real-gas Euler equations}
The compressible flow of a fluid can be accurately modeled by the Euler equations when irreversible processes such as viscous stresses and heat transfer can be neglected. The Euler equations are given as:
\begin{equation}
    \ppto{\begin{bmatrix}
        \rho \\
        \bm{m} \\
        E
    \end{bmatrix}}{t} + \sum_{i=1}^d \ppto{\begin{bmatrix}
        m_i \\
        v_i(\bm{u})  \bm{m} + p(\bm{u})\bm{\delta}_i \\
        v_i(\bm{u}) (E + p(\bm{u}))
    \end{bmatrix}}{x_i} = 0,
    \label{eq:euler}
\end{equation}
where $\rho : \mathbb{R}^d \times [0,t_f] \rightarrow \mathbb{R}_+$ is the mass density, $\bm{m} : \mathbb{R}^d \times [0,t_f] \rightarrow \mathbb{R}^d$ the momentum density and $E : \mathbb{R}^d \times [0,t_f] \rightarrow \mathbb{R}$ the total-energy density. Together, these form the conservative variables of the Euler equations which we gather in a vector $\bm{u}$ of conservative variables. Furthermore, $v_i : \mathcal{U} \rightarrow \mathbb{R}, \bm{u} \mapsto m_i / \rho$ is the fluid velocity\footnote{For the remainder of the article $\bm{v}$ will be used for the velocity, in contrast to its usage as some point in $\mathcal{U}$ in earlier sections.} in direction $i$ and $p : \mathcal{U} \rightarrow \mathbb{R}_+$ is the thermodynamic pressure as a function of the conservative variables. 

To close the system of conservation laws in \eqref{eq:euler} a specification of the pressure $p(\bm{u})$ is required. Such a specification is provided by a thermodynamic equation of state (EoS). A thermodynamic EoS for a single component fluid relates two so-called natural variables $\bm{\tau} \in \mathcal{T} \subseteq \mathbb{R}^2$ (e.g.\ density and temperature) to any other thermodynamic quantity. Hence, an EoS provides us with a pressure in terms of natural variables $p(\bm{\tau})$. It is then necessary to evaluate the (often implicitly defined) map $\bm{\tau} : \mathcal{U}\rightarrow \mathcal{T}$ from conservative variables $\bm{u}$ to natural variables $\bm{\tau}$, which lets us, by some abuse of notation, define $p(\bm{u}) := p(\bm{\tau}(\bm{u}))$. We will interchangeably describe thermodynamic quantities as a function of conservative and thermodynamic variables. Further practical information about our choices of EoS is provided in \autoref{app:thermo}.

Secondary quantities of the Euler equations \eqref{eq:euler} of interest to our applications are the fluid's kinetic-energy density $k : \mathcal{U} \rightarrow \mathbb{R}_+$ and the classical mathematical entropy $s$ of \eqref{eq:euler} given by the negative physical entropy density. Specifically, we have:
\begin{equation}
    k(\bm{u}) := \frac{1}{2}\frac{||\bm{m}||^2}{\rho}, \quad s(\bm{u}) := -\rho \sigma(\bm{u}), \label{eq:kinent}
\end{equation}
where $\sigma : \mathcal{U} \rightarrow \mathbb{R}$ is the specific entropy (i.e., entropy per unit mass) computed using any EoS of choice (see \eqref{eq:entropythermo}). Kinetic energy satisfies the following evolution equation:
\begin{equation}
    \pupt{k(\bm{u})} + \sum_{i=1}^d \ppto{k(\bm{u})v_i(\bm{u})}{x_i} + \sum_{i=1}^d v_i(\bm{u}) \ppto{p(\bm{u})}{x_i} = 0,
    \label{eq:kinencl}
\end{equation}
this is in the form of \eqref{eq:clq} with $q(\bm{u}) = k(\bm{u})$. Entropy satisfies the conservation law:
\begin{equation}
    \pupt{s(\bm{u})} + \sum_{i=1}^d \ppto{s(\bm{u}) v_i(\bm{u})}{x_i} = 0,
    \label{eq:entcl}
\end{equation}
assuming the solution function $\bm{u}$ is $C^1$.

\subsection{Jump expansions for kinetic energy}
We will start by solving the generalized Tadmor condition \eqref{eq:gentadcond} for kinetic energy \eqref{eq:kinent}. It is useful to start here, as the Hessian matrix of kinetic energy $\mathcal{H}_k(\bm{u}) = \ppoppt{k}{\bm{u}}(\bm{u})$ is always singular. In light of the discussion in \autoref{sec:jumpexps}, we thus expect the solution of the method of jump expansions to be in the form of a set defined by a flux condition instead of a single flux. This condition can then be substituted in the classical Tadmor condition for entropy, for which we do expect a unique solution due to convexity. The gradient $\bm{\kappa} : \mathcal{U} \rightarrow \mathbb{R}^n, \bm{u} \mapsto \popt{k}{\bm{u}}(\bm{u})$ and residual function $\psi_k^i$ \eqref{eq:potq} are given as:
\begin{align}
    \psi_k^i(\bm{u}) = p(\bm{u})v_i(\bm{u}), \quad \kappa_\rho(\bm{u}) &= - \frac{1}{2} \frac{||\bm{m}||_2^2}{\rho^2}, \quad \kappa_{\bm{m}}(\bm{u}) = \frac{\bm{m}}{\rho}, \quad\kappa_E(\bm{u}) = 0,
    \label{eq:kgrad}
\end{align}
where the subscripts of $\kappa$ denote with respect to which conservative variable the partial derivative has been taken. As a numerical work term, we will take a straightforward approximation:
\begin{equation}
    [c_k^i\Delta\mathcal{G}_k^i] = \overline{v}_i \Delta p, 
    \label{eq:kinennumwork}
\end{equation}
where we suppress the explicit dependence on an arbitrary pair $(\bm{u}_1,\bm{u}_2) \in \mathcal{U} \times\mathcal{U}$ to ease notation. We will continue doing this in the remainder of this section. As explained in \autoref{sec:discnull}, it is important to verify whether the chosen numerical work term satisfies weak discretely null-consistency \eqref{eq:weaknullcons}. Using the definition \eqref{eq:kgrad}, note that $\Delta \bm{\kappa}_{\bm{m}} = 0$ implies $\Delta \bm{v} = 0$ so $\bm{v}(\bm{u}_1)=\bm{v}(\bm{u}_2) = \widehat{\bm{v}}$. Then, by substituting the definition of $\psi_k^i$ we observe $\Delta \psi_k^i = \Delta (v_i p) = \widehat{v}_i \Delta p$ since $v_i$ is constant. For the same reason, it holds that $\widehat{v}_i = \overline{v}_i$ in \eqref{eq:kinennumwork}; therefore, indeed the implication $\Delta \bm{\kappa} = 0 \implies \Delta \psi_k^i = \overline{v}_i\Delta p$ holds, meaning the numerical work term \eqref{eq:kinennumwork} satisfies weak discrete null-consistency.

The generalized Tadmor condition can now be solved to find a numerical flux condition. We will often use the discrete product rule identity:
\begin{equation}
    \Delta (ab) = \overline{b}\Delta a + \overline{a}\Delta b.
    \label{eq:discprod}
\end{equation}
We will expand in terms of the auxiliary variables $\bm{z}=[\rho, \bm{v}, p]$, as they make all functions in the generalized Tadmor condition polynomial in $\bm{z}$. This makes it particularly simple to solve the generalized Tadmor condition using \eqref{eq:discprod}. Beginning with $\Delta \psi_k^i$, we get:
\begin{equation*}
    \Delta \psi_k^i = \Delta (v_ip) = \overline{p}\Delta v_i + \overline{v}_i \Delta p,
\end{equation*}
where the product rule was used. We continue with $\kappa_{\rho} = -||\bm{v}||^2/2$:
\begin{equation*}
    \Delta \kappa_{\rho} = - \frac{1}{2} \Delta \left(||\bm{v}||^2 \right) = -\overline{\bm{v}}^T\Delta \bm{v}.
\end{equation*}
This is just a component-wise application of the discrete product rule \eqref{eq:discprod}. For $\Delta \kappa_{\bm{m}} = \Delta \bm{v}$, $\Delta \kappa_E = 0$ and the work term $\overline{v}_i\Delta p$ no expansions are necessary. Gathering all the expansions in \eqref{eq:gentadcond} leaves us with:
\begin{equation*}
    \left(
    \begin{bmatrix}
        0 \\ \overline{p} \bm{\delta}_i \\ \overline{v}_i
    \end{bmatrix} - 
    \begin{bmatrix}
        0 \\ \bm{0} \\ \overline{v}_i
    \end{bmatrix} 
    - \begin{bmatrix}
         0 & 0 & 0\\
         -\overline{\bm{v}} & I & 0 \\
         0 & 0 & 0
    \end{bmatrix}
    \begin{bmatrix}
        f_\rho^i \\ f_{\bm{m}}^i \\ f_E^i
    \end{bmatrix}
    \right)^T \begin{bmatrix}
        \Delta \rho \\ \Delta \bm{v} \\ \Delta p
    \end{bmatrix} = 0.
\end{equation*}
It can immediately be seen that the strong discrete null-consistency \eqref{eq:strongdiscnull} is satisfied. A general solution is given by any numerical flux satisfying this relation:
\begin{equation}
    f_{\rho}^i \overline{\bm{v}} + \overline{p} \bm{\delta}_i =  f_{\bm{m}}^i.
    \label{eq:rhomrel}
\end{equation}
The relation \eqref{eq:rhomrel} is the same as the kinetic-energy condition found in \cite{ranochaentropy}. This shows that our new framework can capture existing approaches to derive structure-preserving fluxes. Interestingly, a different discrete form of the kinetic-energy equation was assumed in \cite[equation 10]{ranochaentropy}, as well as a different ansatz for its discretization. These forms are, in fact, equivalent, as they are necessarily compatible, discrete, split-forms of each other.

\subsection{Entropy-conserving and kinetic-energy-consistent flux for real gases}
We will now derive an entropy-conserving numerical flux that incorporates the kinetic-energy consistency condition derived earlier. For an arbitrary EoS the entropy variables $\bm{\eta}$ and residual function $\psi_s^i$ are given by:
\begin{equation}
    \psi_s^i(\bm{u}) = \frac{v_i(\bm{u}) p(\bm{u})}{T(\bm{u})},\quad \eta_\rho(\bm{u}) = \frac{g(\bm{u}) - \frac{1}{2}\frac{||\bm{m}||_2^2}{\rho^2}}{T(\bm{u})}, \quad \eta_{\bm{m}}(\bm{u}) = \frac{\bm{m}}{\rho T(\bm{u})}, \quad \eta_E(\bm{u}) = -\frac{1}{T(\bm{u})},
    \label{eq:sgrad}
\end{equation}
where $T : \mathcal{U} \rightarrow \mathbb{R}_+$ is the thermodynamic temperature and $g : \mathcal{U} \rightarrow \mathbb{R}$ is the specific Gibbs energy \eqref{eq:gibbs}. Since entropy is conserved for $C^1$ solutions, no numerical work term is necessary. Moreover, since the entropy variables are injective, the Tadmor condition is automatically discretely null-consistent. 

Following the approach taken in the ideal-gas case in \cite{chandrashekarkinetic}, we will perform jump expansions with the auxiliary variables $\bm{z} = [\rho, \bm{v}, \beta]$, where $\beta = 1/T$ denotes the inverted temperature. To obtain a flux that is also kinetic-energy-consistent on top of entropy conservative, we will directly substitute the relation \eqref{eq:rhomrel} into the Tadmor condition for entropy. Instead of expanding on a term-by-term basis, as done for kinetic energy, we will see fit to take an integrated approach now due to the greater complexity of the involved terms. In contrast to kinetic energy, this will allow us to combine terms arising from jumps in different quantities during the expansion process, thereby reducing some of the complexity. Written out in full, the Tadmor condition \eqref{eq:tadcond} is:
\begin{equation*}
    \Delta \left(v_i p \beta\right) - f_\rho^i \Delta \left(\left(g - \frac{1}{2} ||\bm{v}||^2\right)\beta\right) - \left(f_{\rho}^i \overline{\bm{v}} + \overline{p}\bm{\delta}_i\right)^T\Delta \left(\beta \bm{v}\right) + f_E^i \Delta \beta = 0.
\end{equation*}
We can use the product rule \eqref{eq:discprod} to write:
\begin{align*}
    \Delta \left(-\frac{1}{2} ||\bm{v}||^2\beta \right) &= -\frac{1}{2}\overline{||\bm{v}||^2} \Delta \beta - \overline{\beta} \Delta \left(\frac{1}{2}||\bm{v}||^2\right), \\
\end{align*}
and:
\begin{align*}
    \overline{\bm{v}}^T\Delta \left(\beta\bm{v}\right) &= ||\overline{\bm{v}}||^2 \Delta \beta + \overline{\beta} \hbox{ }\overline{\bm{v}}^T\Delta \bm{v} \\
    &= ||\overline{\bm{v}}||^2 \Delta \beta+ \overline{\beta} \Delta \left( \frac{1}{2}||\bm{v}||^2\right),
\end{align*}
which allows us to simplify the Tadmor condition to:
\begin{equation*}
    \Delta \left(v_i p\beta\right) - f_\rho^i \Delta \left(g\beta\right) - f_{\rho}^i \left(||\overline{\bm{v}}||^2 -\frac{1}{2}\overline{||\bm{v}||^2} \right) \Delta \beta   - \overline{p} \Delta \left(v_i \beta\right) + f_E^i \Delta \beta = 0.
\end{equation*}
The terms involving pressure in the Tadmor condition can be rewritten as follows:
\begin{equation*}
    \Delta (v_i p \beta) = \overline{pv_i} \Delta \beta + \overline{\beta} \Delta (p v_i),
\end{equation*}
and:
\begin{align*}
    \overline{p} \Delta (v_i \beta) &= \overline{p} \hbox{ } \overline{v}_i\Delta \beta + \overline{p} \hbox{ } \overline{\beta} \Delta v_i \\
    &=\overline{p} \hbox{ } \overline{v}_i\Delta \beta + \overline{\beta} \Delta (p v_i) - \overline{v}_i \overline{\beta} \Delta p \\
    &= 2 \overline{p} \hbox{ } \overline{v}_i\Delta \beta + \overline{\beta} \Delta (p v_i) - \overline{v}_i \Delta (p\beta).
\end{align*}
Combining these manipulations further simplifies the Tadmor condition to:
\begin{gather*}
    \overline{v}_i \Delta (p\beta) - f_\rho^i \Delta \left(g\beta\right) - 
    \left(f_\rho^i\left(||\overline{\bm{v}}||^2 -\frac{1}{2}\overline{||\bm{v}||^2} \right) + (2 \overline{v}_i\overline{p}-\overline{v_ip} ) -f_E^i \right)\Delta \beta = 0. 
\end{gather*}
We are left with two jumps $\Delta (p \beta), \Delta (g\beta)$ that are only defined up to an arbitrary choice of EoS. As both of the terms are thermodynamic in nature, they can also be described using natural EoS variables $\bm{\tau}$. A choice of natural variables that fits our set of auxiliary variables $\bm{z}$ is $\bm{\tau} = [\rho, \beta]$. \textit{A novel application of discrete gradients is now the following}. With this choice of variables, we can apply discrete gradient operators to our arbitrary EoS to still expand the jump $\Delta (p \beta), \Delta (g\beta)$ even though we lack explicit algebraic forms. We then obtain the following expansions:
\begin{align*}
    \Delta \left(p\beta\right) &= \widetilde{\nabla}_\rho (p\beta) \Delta \rho + \widetilde{\nabla}_\beta (p\beta) \Delta \beta, \\
    \Delta \left(g\beta\right) &= \widetilde{\nabla}_\rho (g\beta) \Delta \rho + \widetilde{\nabla}_\beta (g\beta) \Delta \beta.
\end{align*}
Substituting these discrete gradient expansions into the Tadmor condition and rewriting gives the following equation:
\begin{align*}
    &\left(\begin{bmatrix}
        \overline{v}_i\widetilde{\nabla}_\rho (p\beta) \\ \overline{v}_i\widetilde{\nabla}_\beta (p\beta) - (2 \overline{v}_i\overline{p}-\overline{v_ip} )
    \end{bmatrix} -\begin{bmatrix}
        \widetilde{\nabla}_\rho (g\beta) & 0 \\
        \widetilde{\nabla}_\beta(g\beta) + \left(||\overline{\bm{v}}||^2 -\frac{1}{2}\overline{||\bm{v}||^2} \right) & -1 & 
    \end{bmatrix} 
    \begin{bmatrix}
        f_\rho^i \\ f_E^i
    \end{bmatrix}\right)^T \begin{bmatrix}
        \Delta \rho \\ \Delta \beta
    \end{bmatrix} = 0,
\end{align*}
for the jump expansion method. This is a reduced form resulting from the drop in the number of free parameters due to substituting the kinetic-energy condition into the Tadmor condition for entropy. It is sufficient for solutions $f_{\rho}^i, f_E^i$ to exist if:
\begin{equation}
    \det \begin{bmatrix}
        \widetilde{\nabla}_\rho (g\beta) & 0 \\
        \widetilde{\nabla}_\beta(g\beta) + \left(||\overline{\bm{v}}||^2 -\frac{1}{2}\overline{||\bm{v}||^2} \right) & -1 & 
    \end{bmatrix} = - \widetilde{\nabla}_\rho (g\beta) \neq 0,
    \label{eq:determinant}
\end{equation}
for all $\bm{u}_1,\bm{u}_2 \in \mathcal{U}$. We will show later that $\widetilde{\nabla}_\rho (g\beta) > 0$ for some choices of discrete gradient and under physically realistic conditions on the real-gas EoS \cite{holystthermodynamics}. Solving for a flux yields the final form of the mass-density flux component, along with the momentum flux components from the kinetic-energy condition \eqref{eq:rhomrel} and the total-energy-density flux component:
\begin{empheq}[box=\fbox]{align}
    f_\rho^i &= \widetilde{\rho} \hbox{ }\overline{v}_i, \label{eq:numflux} \\
    f_{\bm{m}}^i &= f_{\rho}^i \hbox{ }\overline{\bm{v}} + \overline{p} \bm{\delta}_i, \nonumber \\
    f_E^i &= f_\rho^i \hbox{ } \widetilde{e}  +f_\rho^i\left(||\overline{\bm{v}}||^2 -\frac{1}{2}\overline{||\bm{v}||^2} \right) + (2 \overline{v}_i\overline{p}-\overline{v_ip} ), \nonumber
\end{empheq}
where the density mean satisfies:
\begin{equation}
    \widetilde{\rho} = \frac{\widetilde{\nabla}_\rho (p\beta)}{\widetilde{\nabla}_\rho (g\beta)},
    \label{eq:densitymean}
\end{equation}
and the specific-internal-energy mean satisfies:
\begin{equation}
    \widetilde{e} = \widetilde{\nabla}_\beta(g\beta) - \frac{\widetilde{\nabla}_\beta (p\beta)}{\widetilde{\rho}}.
    \label{eq:internalenergymean}
\end{equation}
Note that the density mean has no singularities if $\widetilde{\nabla}_\rho (g\beta) > 0$, which results from the determinant condition \eqref{eq:determinant}. When assuming $\widetilde{\nabla}_\rho (p\beta) > 0$, and thus that $\widetilde{\rho} > 0$, the internal-energy mean also has no singularities. 

We will show that for some choices of discrete gradient we always have $\widetilde{\nabla}_\rho (p\beta),\widetilde{\nabla}_\rho (g\beta) > 0$ and thus $\widetilde{\rho} > 0$ under appropriate conditions on the EoS. This resulting flux \eqref{eq:numflux} is \emph{consistent with the kinetic-energy equation} \eqref{eq:kinencl} with numerical work term \eqref{eq:kinennumwork} according to \autoref{thm:balance} and \emph{conserves entropy for any EoS} in the sense of \eqref{eq:numcle}. In the literature, similar fluxes for the ideal-gas EoS are often referred to as a kinetic-energy- and entropy-preserving (KEEP). Here, preservation of kinetic energy is intended to mean that only pressure work can produce it. Since our flux \eqref{eq:numflux} is KEEP for any EoS we will refer to our flux and any other flux with this property as a GEoS-KEEP flux, where GEoS refers to general equation of state. To refer specifically to our novel GEoS-KEEP flux as in \eqref{eq:numflux}, we will use the name KEEP-DG, where DG refers to discrete gradient.

\subsection{Flux analysis}
This section presents several statements capturing useful properties of the derived kinetic-energy-consistent and entropy-conserving flux. Let $\bm{f}_h^i = [f_{\rho}^i, (f_{\bm{m}}^i)^T, f_E^i]^T$ denote the flux \eqref{eq:numflux}. The proofs of these statements are given in \autoref{app:propprfs}.
\begin{prop}[Consistency]
    The flux $\bm{f}_h^i$ is consistent with the flux of the Euler equations \eqref{eq:euler} for any discrete gradient and EoS and all $i=1,...,d$.
\end{prop}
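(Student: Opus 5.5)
Set $\bm{u}_1=\bm{u}_2=\bm{u}$ in \eqref{eq:numflux} and check each component against the Euler flux \eqref{eq:euler}. Every arithmetic mean collapses: $\overline{a}(\bm{u},\bm{u})=a(\bm{u})$. By the consistency property in \autoref{def:discgrad}, every discrete gradient with respect to $\bm{\tau}=[\rho,\beta]$ collapses to the true partial derivative at fixed other natural variable. The proof then reduces to identifying $\widetilde{\rho}(\bm{u},\bm{u})=\rho$ and $\widetilde{e}(\bm{u},\bm{u})=e$, the specific internal energy, using only thermodynamic identities valid for any EoS. No property of a particular discrete gradient is needed beyond consistency, which gives the claimed generality.

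For the density mean \eqref{eq:densitymean}, consistency gives $\widetilde{\rho}(\bm{u},\bm{u})=\partial_\rho(p\beta)/\partial_\rho(g\beta)$ at fixed $\beta$, i.e.\ at fixed $T$. The Gibbs relation $dg=-\sigma\,dT+\rho^{-1}dp$ (see \eqref{eq:gibbs}) gives $(\partial g/\partial\rho)_T=\rho^{-1}(\partial p/\partial\rho)_T$. Multiplying numerator and denominator by the common constant $\beta$, the ratio equals $\rho$, provided $(\partial p/\partial \rho)_T\neq 0$. This holds under the physically realistic assumption already used for \eqref{eq:determinant}, namely $\widetilde{\nabla}_\rho(g\beta)>0$. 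Then $f_\rho^i=\rho v_i=m_i$, and $f_{\bm{m}}^i=\rho v_i\bm{v}+p\bm{\delta}_i=v_i\bm{m}+p\bm{\delta}_i$.

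For the internal-energy mean \eqref{eq:internalenergymean}, consistency gives $\widetilde{e}=\partial_\beta(g\beta)|_\rho-\rho^{-1}\partial_\beta(p\beta)|_\rho$. This step I expect to be the only real obstacle, since it needs the right Gibbs--Helmholtz-type identity. Write $g\beta-p\beta/\rho=a\beta$, with $a=e-T\sigma$ the specific Helmholtz energy, using $g=a+p/\rho$. Then
\[
\widetilde{e}(\bm{u},\bm{u})=\partial_\beta(a\beta)|_\rho=a+\beta\,\partial_\beta a|_\rho .
\]
Since $(\partial a/\partial T)_\rho=-\sigma$ and $\partial T/\partial\beta=-T^2$, we get $\beta\,\partial_\beta a=T\sigma$, and hence $\widetilde{e}=a+T\sigma=e$.

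Finally, substitute into $f_E^i$. The kinetic term reduces to $\rho v_i(\|\bm{v}\|^2-\tfrac12\|\bm{v}\|^2)=\rho v_i\tfrac12\|\bm{v}\|^2$, and the pressure term to $2v_ip-v_ip=v_ip$. Therefore $f_E^i=v_i(\rho e+\tfrac12\rho\|\bm{v}\|^2+p)=v_i(E+p)$, which completes consistency for every $i$ and any EoS.
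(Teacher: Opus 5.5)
Your proof is correct and takes the same route as the paper. Both collapse the arithmetic means and use only the consistency property of discrete gradients, reducing everything to $\widetilde{\rho}(\bm{u},\bm{u})=\rho$ and $\widetilde{e}(\bm{u},\bm{u})=e$. The differences are cosmetic. You use the Gibbs relation $dg=-\sigma\,dT+\rho^{-1}dp$ and $g-p/\rho=\mathcal{A}$ where the paper expands explicitly in $\partial_\rho\mathcal{A}$, $\partial_\rho^2\mathcal{A}$, $\partial_\rho\partial_T\mathcal{A}$. You also state the nondegeneracy $(\partial p/\partial\rho)_T\neq 0$ that the paper's cancellation tacitly assumes, and you carry out the $f_{\bm m}^i$ and $f_E^i$ checks that the paper leaves to the reader.
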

\begin{prop}[Symmetry]
    Let $\widetilde{\nabla}(p\beta), \widetilde{\nabla}(g\beta)$ be symmetric discrete gradients. The flux $\bm{f}_h^i$  is symmetric for all $i=1,...,d$.
\end{prop}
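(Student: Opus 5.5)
The plan is to check symmetry component by component, reducing everything to two facts. First, every arithmetic mean $\overline{(\cdot)}$ from \eqref{eq:meananddiff} is symmetric under exchange of its two arguments: $\overline{g}(\bm{u}_1,\bm{u}_2)=\overline{g}(\bm{u}_2,\bm{u}_1)$. Second, by assumption the discrete gradients $\widetilde{\nabla}(p\beta)$ and $\widetilde{\nabla}(g\beta)$, taken with respect to $\bm{\tau}=[\rho,\beta]$, are symmetric. Hence each component $\widetilde{\nabla}_\rho(p\beta)$, $\widetilde{\nabla}_\beta(p\beta)$, $\widetilde{\nabla}_\rho(g\beta)$, $\widetilde{\nabla}_\beta(g\beta)$ is invariant under $\bm{u}_1\leftrightarrow\bm{u}_2$. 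Since sums, products and quotients of symmetric functions are symmetric wherever they are defined, the flux \eqref{eq:numflux} is built entirely from symmetric building blocks, and symmetry follows.

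The steps, in order, are as follows. First I will show that the density mean $\widetilde{\rho}$ in \eqref{eq:densitymean} is symmetric, being a quotient of two symmetric discrete gradient components. Here I need $\widetilde{\nabla}_\rho(g\beta)\neq 0$, which is the same standing assumption \eqref{eq:determinant} under which the flux is defined at all. Second, $f_\rho^i=\widetilde{\rho}\,\overline{v}_i$ is then a product of symmetric functions. Third, $f_{\bm{m}}^i=f_\rho^i\overline{\bm{v}}+\overline{p}\bm{\delta}_i$ is symmetric, since $\overline{\bm{v}}$ and $\overline{p}$ are arithmetic means. Fourth, $\widetilde{e}$ in \eqref{eq:internalenergymean} is symmetric as a combination of symmetric discrete gradient components and $\widetilde{\rho}$, again assuming $\widetilde{\rho}\neq0$. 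Finally, $f_E^i$ is symmetric because each remaining ingredient is symmetric: the terms $\|\overline{\bm{v}}\|^2$, $\overline{\|\bm{v}\|^2}$, $\overline{v}_i\overline{p}$ and $\overline{v_ip}$ are all arithmetic means or products of them.

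The only point requiring care is the mild subtlety that the discrete gradients act on the natural variables $\bm{\tau}(\bm{u})$. I will therefore note explicitly that swapping $\bm{u}_1,\bm{u}_2$ swaps $\bm{\tau}(\bm{u}_1),\bm{\tau}(\bm{u}_2)$, so symmetry in the $\bm{\tau}$-arguments transfers to symmetry in the $\bm{u}$-arguments. The well-definedness conditions ($\widetilde{\nabla}_\rho(g\beta)\neq0$, $\widetilde{\rho}\neq0$) are themselves symmetric statements, so they hold for $(\bm{u}_1,\bm{u}_2)$ exactly when they hold for $(\bm{u}_2,\bm{u}_1)$. I do not expect a genuine obstacle; the argument is a routine verification.
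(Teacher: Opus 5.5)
Your proposal is correct and follows essentially the same argument as the paper: symmetry of the discrete gradients gives symmetry of $\widetilde{\rho}$ and $\widetilde{e}$, and symmetry of the arithmetic means then makes every term of \eqref{eq:numflux} symmetric. Your extra remarks, on passing symmetry from the $\bm{\tau}$-arguments to the $\bm{u}$-arguments and on the well-definedness conditions being themselves symmetric, are valid refinements that the paper leaves implicit.
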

\begin{prop}[Density mean positivity]\label{prop:pos}
    Let the symmetrized Itoh-Abe discrete gradient \eqref{eq:siadg} be used for the terms $\widetilde{\nabla}(p\beta), \widetilde{\nabla}(g\beta)$ in the flux $\bm{f}_h^i$, the density mean \eqref{eq:densitymean} satisfies $\widetilde{\rho}(\bm{u}_1,\bm{u}_2) > 0$ for some $\bm{u}_1,\bm{u}_2\in\mathcal{U}$ if for all $\hat{\bm{\tau}} \in \widetilde{\mathcal{T}}(\bm{u}_1,\bm{u}_2) = \{(\rho, T)\in \mathcal{T}: \rho \in [\rho_1,\rho_2] \subset \mathbb{R}_+, T\in \{T_1,T_2\}\subset \mathbb{R}_+ \}$ the Helmholtz energy $\mathcal{A}$ is convex in $1/\rho$ and $\mathcal{A} \in C^2(\widetilde{\mathcal{T}}(\bm{u}_1,\bm{u}_2))$.
\end{prop}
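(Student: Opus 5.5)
The plan is to write out the symmetrized Itoh--Abe discrete gradient \eqref{eq:siadg} in the natural variables $\bm{\tau}=[\rho,\beta]$ explicitly, and then show that the numerator and denominator of \eqref{eq:densitymean} are positive weighted integrals of the same nonnegative quantity. For a function $h(\rho,\beta)$, the $\rho$-component of the symmetrized Itoh--Abe discrete gradient is the average of the two one-dimensional difference quotients in $\rho$ taken at the frozen inverse temperatures $\beta_1$ and $\beta_2$:
\begin{equation*}
\widetilde{\nabla}_\rho h = \frac{1}{2}\left(\frac{h(\rho_2,\beta_1)-h(\rho_1,\beta_1)}{\rho_2-\rho_1}+\frac{h(\rho_2,\beta_2)-h(\rho_1,\beta_2)}{\rho_2-\rho_1}\right).
\end{equation*}
When $\rho_1=\rho_2$, this is replaced by the corresponding partial derivatives, by consistency of the discrete gradient. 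This already explains the set $\widetilde{\mathcal{T}}(\bm{u}_1,\bm{u}_2)$ in the statement: only states with $\rho\in[\rho_1,\rho_2]$ and $T\in\{T_1,T_2\}$ are ever sampled. Since $\beta$ is held fixed in each quotient, I can pull it out and write $\widetilde{\nabla}_\rho(p\beta)=\tfrac12\sum_{k=1,2}\beta_k\,\delta_\rho p(\cdot,T_k)$, and similarly for $g\beta$. Here $\delta_\rho$ denotes the difference quotient at fixed temperature.

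The next step uses the Helmholtz thermodynamics from \autoref{app:thermo}. With $\mathcal{A}(\rho,T)$ the specific Helmholtz energy, I use
\begin{equation*}
p=\rho^2\,\partial_\rho\mathcal{A} \quad\text{and}\quad g=\mathcal{A}+p/\rho .
\end{equation*}
At fixed $T$ these give the Gibbs--Duhem relation $\partial_\rho g=\rho^{-1}\partial_\rho p$. Writing $\nu=1/\rho$, I have $\partial_\nu^2\mathcal{A}=-\partial_\nu p=\rho^2\partial_\rho p$. Hence convexity of $\mathcal{A}$ in $1/\rho$ on $\widetilde{\mathcal{T}}$, together with $\mathcal{A}\in C^2$, gives $\partial_\rho p\ge0$ there, and the $C^2$ assumption makes $\partial_\rho p$ continuous. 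By the fundamental theorem of calculus along each isotherm $T_k$,
\begin{equation*}
\delta_\rho p(\cdot,T_k)=\frac{1}{\Delta\rho}\int_{\rho_1}^{\rho_2}\partial_\rho p(r,T_k)\,dr, \qquad \delta_\rho g(\cdot,T_k)=\frac{1}{\Delta\rho}\int_{\rho_1}^{\rho_2}\frac{\partial_\rho p(r,T_k)}{r}\,dr .
\end{equation*}
This yields
\begin{equation*}
\widetilde{\rho}=\frac{\sum_k\beta_k\int_{\rho_1}^{\rho_2}\partial_\rho p(r,T_k)\,dr}{\sum_k\beta_k\int_{\rho_1}^{\rho_2} r^{-1}\partial_\rho p(r,T_k)\,dr}.
\end{equation*}
This is a harmonic-type weighted mean of $r\in[\rho_1,\rho_2]$ with nonnegative weight $\beta_k\,\partial_\rho p(r,T_k)$, since $\beta_k=1/T_k>0$ and $r>0$. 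Whenever the total weight is positive, this gives $\min(\rho_1,\rho_2)\le\widetilde{\rho}\le\max(\rho_1,\rho_2)$, and in particular $\widetilde{\rho}>0$. In the degenerate case $\rho_1=\rho_2$, the same formula holds with the integrals replaced by point evaluations, and then $\widetilde{\rho}=\rho_1>0$.

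The main obstacle is strictness. I need the denominator $\widetilde{\nabla}_\rho(g\beta)$, which is also the determinant quantity in \eqref{eq:determinant}, to be strictly positive and not merely nonnegative; otherwise the quotient is $0/0$. Plain convexity in $1/\rho$ only gives $\partial_\rho p\ge0$, which vanishes identically on an isothermal segment where pressure is flat. I would handle this by reading the convexity hypothesis as strict, or as nondegenerate in the sense that $\partial_\nu^2\mathcal{A}$ is not identically zero on either isotherm segment; this corresponds to the absence of a flat coexistence plateau, which is the physically realistic condition of \cite{holystthermodynamics}. Then continuity of $\partial_\rho p$ makes at least one integral strictly positive, and the argument closes. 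I would state this interpretation explicitly in the proof.
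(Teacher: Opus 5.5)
Your proof is correct. It follows the paper's skeleton but closes differently. The shared part is writing the $\rho$-components of the symmetrized Itoh--Abe gradient as averages of isothermal difference quotients at $T_1,T_2$, and reducing to $\partial_\rho(p/T)=\frac{1}{\rho^2T}\partial^2_{1/\rho}\mathcal{A}$ and $\partial_\rho(g/T)=\frac{1}{\rho^3T}\partial^2_{1/\rho}\mathcal{A}$.

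The paper applies the mean value theorem separately to numerator and denominator, with unrelated intermediate densities. It concludes that each is positive because $\partial^2_{1/\rho}\mathcal{A}>0$ pointwise on $\widetilde{\mathcal{T}}(\bm{u}_1,\bm{u}_2)$. It therefore silently reads ``convex'' as a strictly positive second derivative, which is exactly the strictness issue you flag. Your explicit nondegeneracy assumption is the honest version of what the paper uses.

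You instead combine the fundamental theorem of calculus with the Gibbs--Duhem relation $\partial_\rho g=\rho^{-1}\partial_\rho p$. This puts numerator and denominator over the common weight $w(r)=\sum_k\beta_k\,\partial_\rho p(r,T_k)\ge 0$ and exhibits $\widetilde{\rho}$ as a weighted harmonic mean of $r\in[\rho_1,\rho_2]$. That buys more than the paper's route:
\begin{itemize}
\item You get $\min(\rho_1,\rho_2)\le\widetilde{\rho}\le\max(\rho_1,\rho_2)$, so $\widetilde{\rho}$ is a genuine mean, consistent with the logarithmic mean recovered for the ideal gas.
\item You need only that $\partial^2_{1/\rho}\mathcal{A}$ does not vanish identically along at least one of the two isotherm segments, rather than pointwise positivity.
\item Positivity of the denominator, i.e.\ the determinant condition \eqref{eq:determinant}, comes out of the same computation.
\item You also handle the isochoric case $\rho_1=\rho_2$ via \eqref{eq:siadgexact}, which the paper's difference-quotient argument does not address.
\end{itemize}
The paper's route is marginally shorter. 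The continuity of $\partial_\rho p$ your integral representation requires is supplied by the $C^2$ hypothesis, so nothing is lost.
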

The assumptions for this proposition are essentially always satisfied if the two states $\bm{u}_1,\bm{u}_2\in\mathcal{U}$ are in the same thermodynamic phase and the EoS is thermodynamically stable \cite{holystthermodynamics} (i.e.\ it represents a stable thermodynamic equilibrium). 
\begin{prop}[Ideal gas behavior]
    Using the ideal-gas EoS \cite{holystthermodynamics} and the symmetrized Itoh-Abe discrete gradient, the flux $\bm{f}_h^i$ agrees with the Ranocha flux \cite{ranochapreventing, ranochathesis, ranochaentropy}.
\end{prop}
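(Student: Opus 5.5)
The plan is to evaluate the building blocks of \eqref{eq:numflux}, namely $\widetilde{\nabla}(p\beta)$ and $\widetilde{\nabla}(g\beta)$ in the natural variables $\bm{\tau}=[\rho,\beta]$, in closed form for the ideal gas. I will then compare the resulting $f_\rho^i$, $f_{\bm{m}}^i$, $f_E^i$ term by term with the Ranocha flux, written as
\begin{equation*}
f_\rho = \rho_{\log}\overline{v}_i,\quad f_{\bm{m}} = f_\rho\overline{\bm{v}}+\overline{p}\bm{\delta}_i,\quad f_E = f_\rho\Big(\frac{1}{(\gamma-1)(\rho/p)_{\log}}+\frac{1}{2}\bm{v}_1^T\bm{v}_2\Big)+\frac{1}{2}\big(p_1 v_{i,2}+p_2 v_{i,1}\big).
\end{equation*}
Here $(\cdot)_{\log}$ is the logarithmic mean $\Delta a/\Delta\ln a$, extended continuously by $a$ on the diagonal.

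The first step is the thermodynamics. With $p=\rho R T$, $e=c_vT$ and $\sigma=c_v\ln T-R\ln\rho+\text{const}$, we get $p\beta=R\rho$. We also get
\begin{equation*}
g\beta=\beta(e+p/\rho-T\sigma)=c_v+R-\sigma=R\ln\rho+c_v\ln\beta+\text{const}.
\end{equation*}
Both functions are \emph{additively separable} in $(\rho,\beta)$. For separable functions, each of the two Itoh--Abe orderings gives the same result: the $\rho$-component is the difference quotient of the $\rho$-part and the $\beta$-component is the difference quotient of the $\beta$-part. Hence the symmetrized Itoh--Abe gradient \eqref{eq:siadg}, being their average, gives
\begin{equation*}
\widetilde{\nabla}_\rho(p\beta)=R,\quad \widetilde{\nabla}_\beta(p\beta)=0,\quad \widetilde{\nabla}_\rho(g\beta)=R\,\frac{\Delta\ln\rho}{\Delta\rho}=\frac{R}{\rho_{\log}},\quad \widetilde{\nabla}_\beta(g\beta)=\frac{c_v}{\beta_{\log}}.
\end{equation*}
The diagonal cases $\Delta\rho=0$ or $\Delta\beta=0$ are covered by the consistency part of \autoref{def:discgrad}, since $\rho_{\log}=\rho$ there.

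The second step is to substitute into \eqref{eq:densitymean} and \eqref{eq:internalenergymean}. This gives $\widetilde{\rho}=\rho_{\log}$ and $\widetilde{e}=c_v/\beta_{\log}$. Since $\rho/p=\beta/R$, we have $(\rho/p)_{\log}=\beta_{\log}/R$. Therefore $c_v/\beta_{\log}=1/\big((\gamma-1)(\rho/p)_{\log}\big)$ because $c_v=R/(\gamma-1)$, which matches Ranocha's internal-energy term. The mass and momentum components then agree immediately.

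The last step is the remaining energy terms, which reduce to two elementary algebraic identities for two states:
\begin{equation*}
\|\overline{\bm{v}}\|^2-\tfrac12\overline{\|\bm{v}\|^2}=\tfrac12\bm{v}_1^T\bm{v}_2,\qquad 2\overline{v}_i\overline{p}-\overline{v_ip}=\tfrac12(p_1v_{i,2}+p_2v_{i,1}).
\end{equation*}
Both follow by expanding the averages.

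The only genuinely delicate step is the first one: confirming that $g\beta$ reduces to $R\ln\rho+c_v\ln\beta$ up to a constant, with the correct Gibbs energy convention \eqref{eq:gibbs}, and that the symmetrized Itoh--Abe gradient of a separable function really is the pair of one-dimensional difference quotients. I would check the latter directly from the definition in \autoref{app:dgs}: each partial difference quotient is taken with the other variable frozen, and separability makes the frozen variable drop out. Everything else is bookkeeping.
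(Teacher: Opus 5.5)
Your proposal is correct and follows the paper's own proof: evaluate the symmetrized Itoh--Abe components of $p\beta=\overline{R}\rho$ and $g\beta=\overline{R}\ln\rho+c_v\ln\beta$ to get $\widetilde\rho=\rho_{\log}$ and $\widetilde e=c_v/\beta_{\log}$, then match with Ranocha's flux. Your explicit check of the two velocity/pressure identities and your use of the homogeneity $(\beta/\overline{R})_{\log}=\beta_{\log}/\overline{R}$ are small improvements (the latter makes the paper's normalization $\overline{R}=1$ unnecessary); one minor imprecision is that the case where only one coordinate coincides (e.g.\ $\rho_1=\rho_2$, $\beta_1\neq\beta_2$) is governed by the degenerate-case rule \eqref{eq:siadgexact} rather than by the consistency axiom of \autoref{def:discgrad}, although for your separable functions it gives the same value $\overline{R}/\rho$.
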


\section{Application: numerical experiments}\label{sec:numexps}
\subsection{Setup and implementation}
A series of numerical experiments is performed on transcritical flows to test the proposed KEEP-DG numerical flux \eqref{eq:numflux} and to validate the theoretical results obtained from the proposed generalized Tadmor condition framework. We will first perform numerical experiments for a simple one-dimensional transcritical density wave (DW) \cite{aielloentropy}, for which there is an exact solution. This will be followed by a more complex compressible turbulent flow given by a three-dimensional transcritical Taylor-Green vortex (TGV) \cite{wanghighorder, debonissolutions, sciacovelliassessment, boldinicubens}. The case of the DW will serve to validate some basic numerical properties of the flux \eqref{eq:numflux} and to provide an initial assessment of its conservation properties under real-gas conditions. The TGV will be used to assess the properties of our KEEP-DG flux \eqref{eq:numflux} under more realistic real-gas and compressible turbulent flow configurations. 

All test cases consider transcritical real-gas conditions. A flow is termed transcritical when its pressure and temperature are above its critical point, and its isobaric heat capacity is close to a local maximum \cite{guardonenonideal}. Such states are characterized by strong thermodynamic gradients as shown in \autoref{fig:eos_comparison}. In \autoref{fig:eos_comparison}, the ideal-gas case (IG), which possesses no transcritical regime, is shown as a reference. A number of EoS will be considered, namely the Van der Waals (VdW) EoS \cite{holystthermodynamics}, Peng-Robinson (PR) EoS \cite{pengnew}, and the  Kunz-Wagner (KW) EoS \cite{kunzgerg}. Although our flux can be applied to any single-component flow, we will focus on carbon dioxide ($\mathrm{CO_2}$) due to its importance in practical applications \cite{draskicsteady}. The applied $\mathrm{CO_2}$-specific material properties in the EoS are given in \autoref{tab:eosparams} and, except for the number of molecular degrees of freedom $\varsigma$, are all taken from the REFPROP 10.0 database \cite{lemmonnist}. For reference, we also report the VdW and PR EoS in \autoref{app:eos}. 

All experiments are carried out with our multi-GPU, open-source, JAX-based \cite{jaxgithub} code for the simulation of compressible flows of real gases called HelmEOS2 \cite{kleinHelmEOS2}. HelmEOS2 uses a single user-provided expression for an arbitrary specific Helmholtz energy to derive a full library of thermodynamic quantities using JAX's automatic differentiation capabilities. High-performance multi-GPU capabilities are obtained through the use of JAX Array objects in combination with JAX's \emph{sharded memory} functionality \cite{jaxgithub}.

\begin{table}[]
\centering
\begin{tabular}{llll}
\hline
Symbol   & Value & Unit & Description           \\ \hline
$M$      & $44.0098 \cdot 10^{-3}$  & $\kg \cdot \mol^{-1}$   & Molar mass \\
$\rho_c$ & 467.5997                 & $\kg \cdot \meter^{-3}$ & Mass density at critical point \\
$T_c$    & 304.1282                 & $\kelv$                 & Temperature at critical point \\
$p_c$    & $7.3773 \cdot 10^6$      & $\pascal$               & Pressure at critical point \\
$\varsigma$    & 5                        & --                      & Number of molecular degrees of freedom \\
         &                          &                         & (vibrational neglected for VdW \& PR) \\
$\theta$ & 0.22394                  & --                      & Acentric factor \\ \hline
\end{tabular}
\caption{Material properties of $\mathrm{CO_2}$ used throughout the numerical experiments. All properties but $\varsigma$ are obtained from the REFPROP 10.0 database \cite{lemmonnist}.}
\label{tab:eosparams}
\end{table}

\begin{figure}[!htbp]
   \centering
   \includegraphics[width=\linewidth]{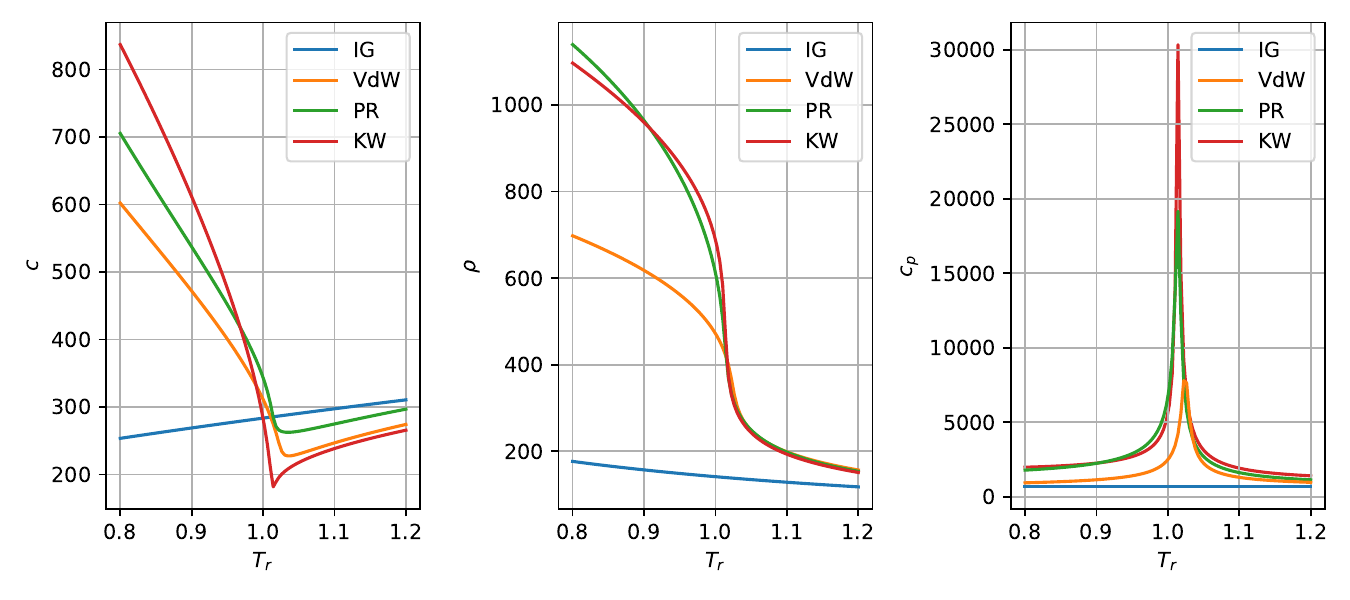}
   \caption{Comparison of the speed of sound $c$ $[\meter \cdot \second^{-1}]$, reduced density $\rho_r = \rho / \rho_c$ and isobaric heat capacity $c_p$ $[\joule \cdot \kg^{-1} \cdot \kelv^{-1}]$ as a function of reduced temperature $T_r = T / T_c$, using different EoS (ideal gas (IG), Van der Waals (VdW), Peng-Robinson (PR), Kunz-Wagner (KW)) over an isobar $p=1.1 \cdot p_c$ $[\pascal]$.}
   \label{fig:eos_comparison}
\end{figure}

%Describe what gas and EoS, what code on what machine etc, high-level description of code (autodiff, jax etc.) paragraph + paper repo 

\subsection{Transcritical density wave}\label{sec:DW}
Our first numerical experiment is the one-dimensional transcritical density wave (DW) \cite{aielloentropy}. As an analytical solution is available, this is a useful case to validate the order of accuracy of the scheme \eqref{eq:disc} when used in combination with our novel KEEP-DG flux \eqref{eq:numflux}. The structure of the DW solution also allows us to study a variety of different conservation properties in addition to entropy conservation. Namely, the total kinetic energy is also conserved for DW. As we will show later, this is a consequence of no pressure work being done in the flow. Moreover, we can study the so-called pressure equilibrium preservation (PEP) property \cite{ranochapreventing, bernadeskinetic, demichelenovel} in the presence of real-gas dynamics as the velocity and pressure fields are constant (a sufficient condition for pressure equilibria). The DW experiment will be carried out on a one-dimensional periodic domain $\Omega = [0, 1]$; $x$ is the spatial coordinate with unit m. The initial condition is the following:
\begin{align}
    \rho_0(x) &= \rho_c \left[0.839 + 0.1 \sin(2 \pi x) \right], \label{eq:dwinit}\\
    v_0(x) &= 10, \nonumber \\
    p_0(x) &= 1.758 p_c. \nonumber 
\end{align}
It is straightforward to show that the advection of the initial density profile $\rho_0(x)$ with velocity $10 \hbox{ } \meter \cdot \second^{-1}$ at constant pressure and velocity is a solution to \eqref{eq:euler} for any EoS. We use the VdW EoS for simplicity. The coefficients in \eqref{eq:dwinit} are selected so that the flow is highly transcritical under the VdW EoS to emphasize real-gas effects. All experiments are carried out until a final time $t_f = 0.5 \hbox{ } \second$, corresponding to five periods of the solution. As a discrete gradient, we will use the second-order accurate symmetrized Itoh-Abe operator \eqref{eq:siadg} \cite{itohhamiltonian} throughout this experiment. We will use the classical fourth-order Runge-Kutta (RK4) \cite{hairergeometric} integrator for time integration. This experiment was performed using HelmEOS2 \cite{kleinHelmEOS2} on a simple laptop workstation with an Intel i7-12700H CPU.

We start by examining the scheme's convergence order using an empirical convergence analysis. We will carry out a convergence analysis on meshes of $N_x \in \{33, 65, 129, 257 \}$ cells. We use odd values of $N_x$ to avoid a symmetric placement of grid points about local extrema of the initial condition. For the DW initial condition on an even mesh, an extremum can lie exactly midway between two neighboring points at equal distance, which may produce locally isochoric or isothermal discrete states (i.e., vanishing $\Delta T$, $\Delta \rho$). In this situation, several discrete gradient evaluations in the numerical flux \eqref{eq:numflux} must be replaced by analytically computed gradients (see \autoref{app:dgs}) via the switch \eqref{eq:siadgswitch}. Because activation of this switch changes the discrete operator in a mesh-dependent way and can bias the measured convergence rate, we disable it for the convergence tests. Let $\bm{x}_h \in \mathbb{R}^{N_x}$ be a vector of cell centers and $\rho(\bm{x}_h,t_f) \in \mathbb{R}^{N_x}$ be a vector of function values obtained from evaluating the exact solution $\rho(x,t)$ on all cell centers $\bm{x}_h$ at time $t_f$. Furthermore, let $\bm{\rho}_h(t_f) \in \mathbb{R}^{N_x}$ be a vector containing the numerical solution in all cell centers $\bm{x}_h$ at time $t_f$. In a similar fashion, define $m(\bm{x}_h, t_f), \bm{m}_h(t_f) \in \mathbb{R}^{N_x}$ and $E(\bm{x}_h,t_f), \bm{E}_h(t_f) \in \mathbb{R}^{N_x}$ for the exact and numerically approximated momentum and total-energy densities, respectively. We will measure the errors in mass density with: 
% (this is another option to list, you can change it back to how you had it before )
\begin{equation*}
    \varepsilon_{\rho} := \frac{||\bm{\rho}_h(t_f) -  \rho(\bm{x}_h,t_f)||_{\infty}}{||\rho(\bm{x}_h,t_f)||_{\infty}},
\end{equation*}
in momentum density with: 
% measuring relative error in mass density;
\begin{equation*}
    \varepsilon_{m} := \frac{||\bm{m}_h(t_f) -  m(\bm{x}_h,t_f)||_{\infty}}{||m(\bm{x}_h,t_f)||_{\infty}},
\end{equation*}
and in total-energy density with: 
% measuring relative error in momentum density, and:
\begin{equation*}
    \varepsilon_{E} := \frac{||\bm{E}_h(t_f) -  E(\bm{x}_h,t_f)||_{\infty}}{||E(\bm{x}_h,t_f)||_{\infty}},
\end{equation*}
respectively. 
% measuring relative error in total energy density. 
Here, $||\bm{w}||_{\infty} := \max_{i}|w_i|$ denotes the vector $\infty$-norm of some vector $\bm{w}\in\mathbb{R}^{N_x}$. We are primarily interested in the contribution of the spatial discretization to the error at time $t_f$; therefore, we will integrate over a large number of timesteps $N_t \in \mathbb{N}$ to mitigate time integration contributions to the error. For $N_x = 33$ we will set $N_t = 10^6$, resulting in a $\mathrm{CFL} \approx 0.005$ for the VdW EoS. We will double $N_t$ at each subsequent mesh in the convergence analysis, thus keeping the $\mathrm{CFL}$ number roughly constant. The results of the convergence analysis are provided in \autoref{fig:convanal}. The expected order of convergence of $\mathcal{O}(\Delta x^2)$ for central three-point schemes is observed when comparing $\varepsilon_{\rho}, \varepsilon_{m}, \varepsilon_{E}$ with the reference value $C\Delta x^2$ for some $C > 0$.

\begin{figure}
\begin{minipage}[c]{0.48\linewidth}
\includegraphics[width=\linewidth]{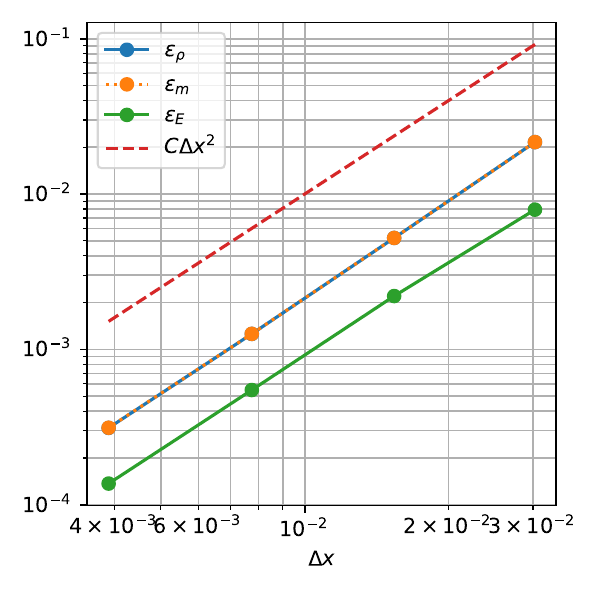}
\caption{Results of the convergence analysis, the relative errors in mass density $\varepsilon_{\rho}$, momentum density $\varepsilon_m$, and total-energy density $\varepsilon_E$ are shown and compared against a reference. All errors are seen to be of order $\mathcal{O}(\Delta x^2)$.}
\label{fig:convanal}
\end{minipage}
\hfill
\begin{minipage}[c]{0.48\linewidth}
\includegraphics[width=\linewidth]{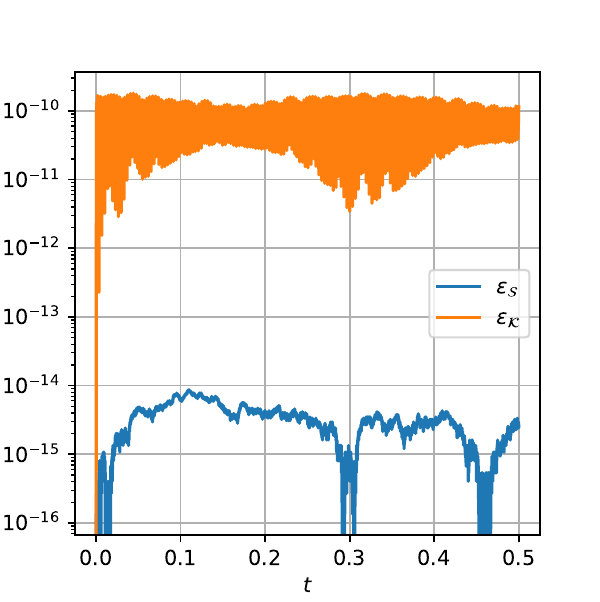}
\caption{Results of the conservation analysis, the relative conservation error of total entropy $\varepsilon_{\mathcal{S}}$, and total kinetic energy $\varepsilon_{\mathcal{K}}$ are shown. Total entropy is conserved to machine precision, while total kinetic energy is conserved to within a very small error.}
\label{fig:consanal}
\end{minipage}%
\end{figure}

\begin{figure}
    \centering
    \includegraphics[width=\linewidth]{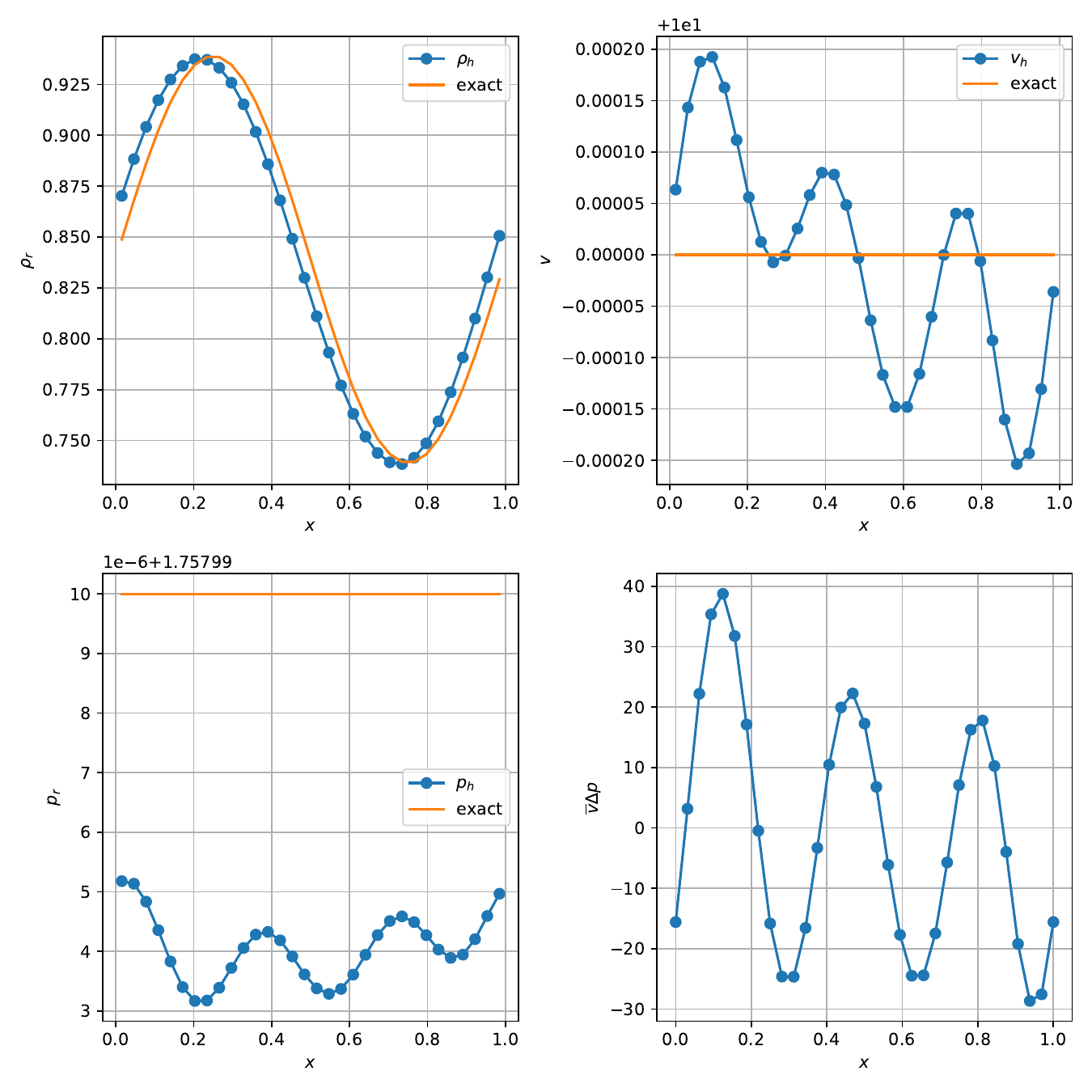}
    \caption{The numerical solution of DW at $t=t_f$ in terms of the primitive variables $\bm{\rho}_h, \bm{v}_h,\bm{p}_h$, alongside the numerical work term \eqref{eq:kinennumwork} at cell interfaces (bottom-right). Deviations from exact spatially constant solutions can be observed in the numerical solution (top-right and bottom-left), leading to spurious pressure work (bottom-right).}
    \label{fig:dwavesol}
\end{figure}

We continue with an initial analysis of the scheme's conservation properties. To emphasize possible errors, we will perform this experiment on a particularly coarse mesh of $N_x = 32$, turning back on the switching algorithm \eqref{eq:siadgswitch} for the discrete gradients in our numerical flux \eqref{eq:numflux}. Again, we want to mitigate time integration contributions to the conservation errors, so we will integrate over $N_t = 10^6$ time steps. This results in roughly the same $\mathrm{CFL}$ number, $\mathrm{CFL} \approx 0.005$, as in the previous experiment. The exact solution of DW satisfies both conservation of total entropy $\mathcal{S} : [0, t_f] \rightarrow \mathbb{R}$ and kinetic energy $\mathcal{K} : [0, t_f] \rightarrow \mathbb{R}$, respectively defined as:
\begin{align*}
    \mathcal{S}(t) &:= \int_{\Omega} s(\bm{u}(x,t)) d\Omega, \\
    \mathcal{K}(t) &:= \int_{\Omega} k(\bm{u}(x,t)) d\Omega,
\end{align*}
with $s,k$ as in \eqref{eq:kinent}. The conservation of $\mathcal{K}$ follows from integrating \eqref{eq:kinencl} over the domain $\Omega$ for spatially constant velocity and pressure fields for which the non-conservative term in \eqref{eq:kinencl} disappears. On a uniform mesh, with cell sizes $\Delta x = 1/32 \hbox{ } \meter$, the numerical total entropy $\mathcal{S}_h : [0, t_f] \rightarrow \mathbb{R}$ and numerical total kinetic energy $\mathcal{K}_h : [0, t_f] \rightarrow \mathbb{R}$ are defined as:
\begin{align*}
    \mathcal{S}_h(t) &:= \sum_{i=1}^{N_x} s(\bm{u}_i(t)) \Delta x, \\
    \mathcal{K}_h(t) &:= \sum_{i=1}^{N_x} k(\bm{u}_i(t)) \Delta x,
\end{align*}
where $[\rho_i(t), m_i(t), E_i(t)]^T = \bm{u}_i(t) \in \mathbb{R}^3$ is the numerical solution vector in grid cell $i$. 

Since entropy is a conserved secondary structure, summing \eqref{eq:numcle} over the periodic mesh shows that $\mathcal{S}_h$ is discretely conserved. Let $\bm{v}_h(t)\in\mathbb{R}^{N_x}$ be a vector containing the velocity values at time $t \in [0, t_f]$ in all cell centers, computed from the numerical solution vector $\bm{u}_i(t)$ in each respective cell. Similarly, let $\bm{p}_h(t) \in \mathbb{R}^{N_x}$ be a vector of pressure values in cell centers. Then conservation also holds for $\mathcal{K}_h$, after summing \eqref{eq:discq} (for $q_{\bm{I}}=k_{\bm{I}}$) over the periodic mesh and substituting constant $\bm{v}_h, \bm{p}_h$ vectors in the numerical work terms \eqref{eq:kinennumwork}. We will analyze the conservation properties using the following error measures:
\begin{equation}
    \varepsilon_{\mathcal{S}}(t) := \frac{|\mathcal{S}_h(t) - \mathcal{S}_h(0)|}{|\mathcal{S}_h(0)|}, 
    \label{eq:enterr}
\end{equation}
measuring the deviation from total entropy conservation, and:
\begin{equation*}
    \varepsilon_{\mathcal{K}}(t) := \frac{|\mathcal{K}_h(t) - \mathcal{K}_h(0)|}{|\mathcal{K}_h(0)|},
\end{equation*}
measuring the deviation from total kinetic-energy conservation. Both errors are shown as a function of time in \autoref{fig:consanal}. Machine precision errors are reached throughout the simulation for $\varepsilon_{\mathcal{S}}$ as expected in the presence of negligible time integration errors. However, $\varepsilon_{\mathcal{K}}$ is not at machine precision. This result has been noted in earlier studies of kinetic-energy conservation \cite{gassnersplit, ranochaentropy}. It is a consequence of the fact that the numerical work terms in the discrete balance law \eqref{eq:discq} (for $q_{\bm{I}}=k_{\bm{I}}$) are only conditionally zero (for constant $\bm{v}_h, \bm{p}_h$ in the case of $\mathcal{K}_h$) after summing over the periodic mesh and numerical errors can still influence the numerical production of $\mathcal{K}_h$. This cannot occur in the case of entropy, as the numerical work terms for entropy are identical to zero for entropy-conserving fluxes. To illustrate this, \autoref{fig:dwavesol} shows the numerical solution at time $t_f$ in terms of $\bm{\rho}_h, \bm{v}_h,\bm{p}_h$ and the work term in \autoref{fig:dwavesol}. Indeed, due to numerical error accumulation, both $\bm{p}_h$ and $\bm{v}_h$ are not spatially constant, and thus spurious pressure work is being done. We note that this issue is entirely distinct from discrete null-consistency. Where discrete null-consistency ensures the existence of a discrete kinetic-energy balance law under locally constant $\bm{\kappa}$, the current issue is concerned with the numerical accuracy of a readily existing numerical work term \eqref{eq:kinennumwork} under perturbations from locally constant $\bm{v}_h, \bm{p}_h$ due to numerical errors. \autoref{fig:dwavesol} also shows that the scheme does not satisfy the PEP property \cite{bernadeskinetic, ranochapreventing, demichelenovel} exactly. Furthermore, we note that the error is dominated by dispersion effects when time-integration errors are negligible, indicating the non-dissipative nature of the spatial discretization scheme.

\subsection{Inviscid real-gas Taylor-Green vortex}\label{sec:tgv}
We now proceed with the compressible turbulent supercritical TGV test case. We will carry out a number of inviscid and viscous experiments, starting with the inviscid case in this subsection. These experiments are meant, respectively, to test the conservation properties and stability of the scheme against under-resolution, and its physical fidelity for resolved transcritical turbulence. In particular, in inviscid form, there is no lower bound on the size of the turbulent length-scales that can form in this flow due to the lack of a turbulent dissipation mechanism. In fact, spatial scales become progressively smaller over time. Therefore, for sufficiently long times, the flow will become under-resolved at any mesh resolution. It has been noted in \cite{gassneraccuracy} that even in the presence of some under-resolution, practically useful results can be obtained from turbulent computations under certain conditions. It is therefore interesting to assess the stability of our KEEP-DG flux \eqref{eq:numflux} in the presence of strong under-resolution in turbulent flows of real gases. 

We will perform a comparison with other fluxes that are suitable to be used with arbitrary EoS and are noted for their entropy-conserving capabilities, specifically the recently proposed \textit{singular} GEoS-KEEP flux \cite{aielloentropy}, which we will refer to as KEEP-S with S standing for singular, and the KEEP-Q flux of \cite{tamakicomprehensive, kuyakinetic}. For KEEP-S, singular is meant in the sense that divisions by zero can occur. Specifically, this singularity takes place when neighboring cells have the same temperature. It should also be noted that KEEP-Q is not strictly entropy-conserving in the sense of Tadmor \cite{tadmorentropy, tadmornumerical} and therefore not a GEoS-KEEP flux, but has still shown good entropy conservation properties \cite{tamakicomprehensive}. The singularity of the KEEP-S flux in \cite{aielloentropy} is dealt with through a switching mechanism, where the flux is replaced by another non-singular flux when nearly singular. The KEEP-S flux of \cite{aielloentropy} has been shown to be very stable under non-isothermal flow configurations and to satisfy the GEoS-KEEP property; however, when combined with non-entropy-conserving schemes to deal with the singularity, the resulting scheme loses strict entropy conservation. We will show that the switching mechanism and the resulting loss of strict entropy conservation of the singular KEEP-S flux \cite{aielloentropy} can still lead to instability in flow configurations that require a lot of switch activations. We will show that our novel flux prevents these issues. 

A configuration of the Taylor-Green vortex \cite{wanghighorder, debonissolutions} that is particularly challenging for the singular KEEP-S flux of \cite{aielloentropy} is the isothermal TGV, as isothermal conditions are precisely when the flux in \cite{aielloentropy} becomes singular. We define the isothermal TGV on a triple-periodic box domain $\Omega = [-\pi L, \pi L]^3$. The initial condition on $\Omega$ is given as:
\begin{align}
    v_{0,1}(\bm{x}) &= V_0 \sin \left(\frac{x_1}{L}\right) \cos \left(\frac{x_2}{L}\right) \cos \left(\frac{x_3}{L}\right), \label{eq:tgv_initial} \\
    v_{0,2}(\bm{x}) &= -V_0 \cos \left(\frac{x_1}{L}\right) \sin \left(\frac{x_2}{L}\right) \cos\left(\frac{x_3}{L}\right), \nonumber\\
    v_{0,3}(\bm{x}) &= 0, \nonumber \\
    p_0(\bm{x}) &= p_0 + \frac{\rho_0 V_0^2}{16} \left( \cos \left(\frac{2x_1}{L}\right) + \cos \left(\frac{2x_2}{L}\right)\right)\left( \cos \left(\frac{2x_3}{L}\right) + 2\right), \nonumber\\
    T_0(\bm{x}) &= T_0. \nonumber
\end{align}
We choose the length scale $L \in \mathbb{R}_+$ and characteristic velocity $V_0 \in \mathbb{R}_+$, pressure $p_0  \in \mathbb{R}_+$, density $\rho_0  \in \mathbb{R}_+$ and temperature $T_0  \in \mathbb{R}_+$ to be in line with the inviscid numerical conservation experiment carried out in \cite{tamakicomprehensive}. We will thus set\footnote{Although precise physical units do not matter much in this scale-invariant inviscid setting.} $L = 1 \hbox{ } \meter$ and choose consistent thermodynamic quantities so that $\mathrm{Ma}= 0.4$, where:
\begin{equation*}
    \mathrm{Ma} := \frac{V_0}{c(\rho_0, T_0)},
\end{equation*}
is the Mach number and $c : \mathcal{T} \rightarrow \mathbb{R}_+$ is the speed-of-sound \eqref{eq:speedofsound}. We will use the Peng-Robinson (PR) EoS \cite{pengnew} for this experiment. The characteristic quantities can then be computed to satisfy our $\mathrm{Ma}= 0.4$-constraint by using the values in \autoref{tab:iTGVparams}. Below the critical point $(p<p_c, T<T_c)$ the PR EoS can produce unphysical values. The particular choice of characteristic density and temperature $\rho_0,T_0$ has been made to prevent potentially strong numerical oscillations, produced by the high degree of under-resolution, from making the solution enter this regime. 
%sufficiently far away from the thermodynamic two-phase region. below the critical point $(p<p_c, T<T_c)$ where many EoS produce unphysical values To prevent the numerical solution in a computational cell from falling . Many EoS provide unphysical values in this region without additional phase-change models and would make the Euler equations \eqref{eq:euler} inherently unstable. As phase-change models fall outside the scope of this article, we choose our reference state $(\rho_0,T_0)$ sufficiently far from this region to mitigate this potential mode of failure. This reference state is not transcritical, but is supercritical.

\begin{table}[h!]
\centering
\begin{tabular}{lll}
\hline
Symbol     & Value            & Unit                                  \\ \hline
$\rho_0$   & $0.3 \rho_c$   & $\kg \cdot \meter^{-3}$               \\
$T_0$     & $1.4 T_c$       & $\kelv$                               \\
$p_0$     & $p(\rho_0,T_0)$        & $\pascal$                      \\
$V_0$     & $0.4 c(\rho_0, T_0)$   & $\meter \cdot \second^{-1}$    \\
$L$        & 1              & $\meter$                              \\ \hline
\end{tabular}
\caption{Characteristic values used for the inviscid TGV initial condition \eqref{eq:tgv_initial}. Here, $\rho_c,T_c$ are as in \autoref{tab:eosparams}. The $\mathrm{Ma}=0.4$-constraint is satisfied by taking $V_0 = 0.4 c(\rho_0, T_0)$.}
\label{tab:iTGVparams}
\end{table}

We will carry out the experiment on a uniform Cartesian mesh with $N_x^3$ cells, where $N_x = 64$ is the number of cells in each coordinate direction. The turbulent convective time scale is defined as $t_c = L/V_0$. We will integrate up to a final time $t_f = 50 t_c$, so that the flow becomes highly under-resolved. As a time integrator, we will use the classical fourth-order Runge-Kutta (RK4) method \cite{hairergeometric}. To mitigate contributions to the error due to time integration, we will use $N_t = 50 \cdot 6400$ timesteps. This leads to $\mathrm{CFL} \approx 0.005$ throughout the simulation. As a discrete gradient, our KEEP-DG flux \eqref{eq:numflux} will use the symmetrized Itoh-Abe discrete gradient operator \eqref{eq:siadg} \cite{itohhamiltonian}. We will measure entropy conservation through the error $\varepsilon_{\mathcal{S}}$ as defined in \eqref{eq:enterr}, where in the three-dimensional case, the definition of the discrete total entropy is:
\begin{equation*}
    \mathcal{S}_h(t) := \sum_{\bm{I}\in\mathcal{I}} s(\bm{u}_{\bm{I}}) \Delta x_1\Delta x_2\Delta x_3.
\end{equation*}
Here $\mathcal{I} \subset \mathbb{N}^3$ is an index set containing all grid cell multi-indices $\bm{I} = (i_1,i_2,i_3)$. The definition of the discrete total kinetic energy is:
\begin{equation*}
    \mathcal{K}_h(t) := \sum_{\bm{I}\in\mathcal{I}} k(\bm{u}_{\bm{I}}) \Delta x_1\Delta x_2\Delta x_3.
\end{equation*}
For the simulation with the singular KEEP-S flux \cite{aielloentropy}, we will also track the percentage of cells that have at least one cell face with a replacement flux active over time. As a replacement flux, we will use the KEEP-Q flux \cite{tamakicomprehensive}. We will use the switching strategy as proposed in \cite{aielloentropy}. We will switch when the absolute difference between temperatures between two cells is less than $10^{-3} \hbox{ } \kelv$. This experiment is performed using HelmEOS2 \cite{kleinHelmEOS2} on a quarter GPU node of Snellius (the Dutch national supercomputer). A quarter GPU node of Snellius consists of a single Nvidia H100 GPU with $94$ Gigabytes of RAM.

\begin{figure}
  \makebox[\textwidth][c]{\includegraphics[width=\textwidth]{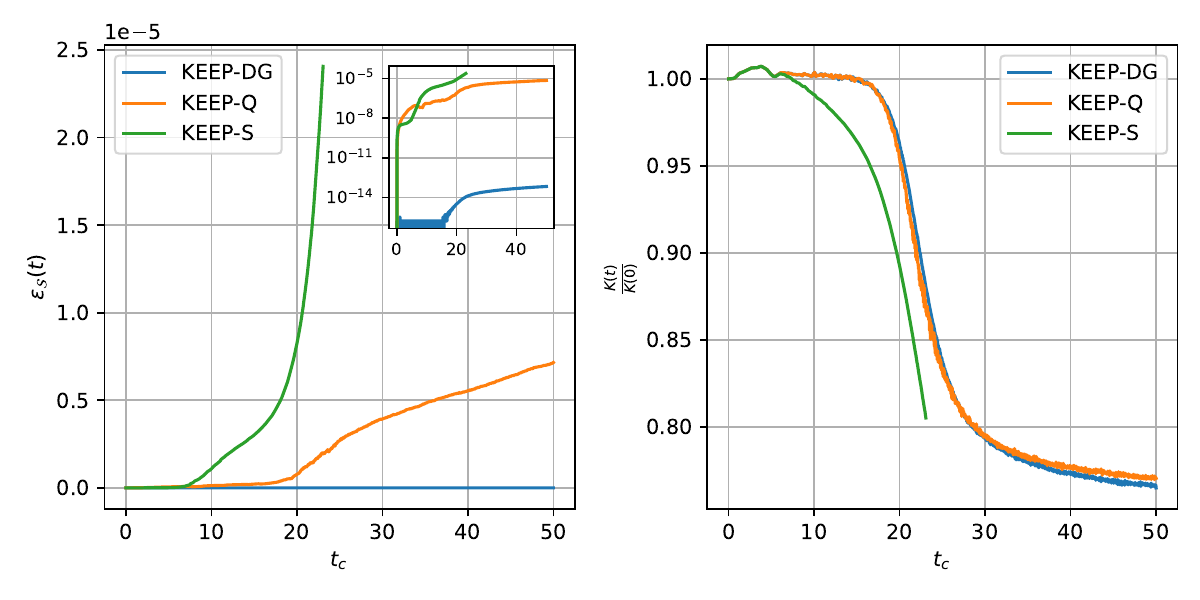}}%
  \caption{Results of the conservation analysis. Left is a comparison of the relative total entropy conservation error $\varepsilon_{\mathcal{S}}(t)$ as in \eqref{eq:enterr} between different fluxes. The error is shown both in linear and semi-log axes. Right is a comparison of the relative total kinetic-energy evolution. In both figures, time is measured in number of convective time scales $t_c = L /V_0$.}
  \label{fig:blowuptgv}
\end{figure}

\begin{figure}
    \centering
    \includegraphics[width=0.6\linewidth]{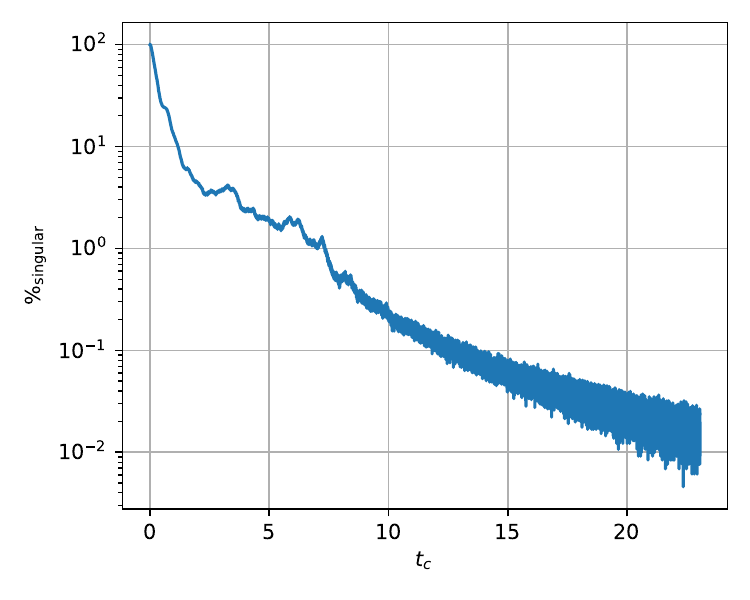}
    \caption{The percentage of computational cells where the switch of the singular KEEP-S flux of \cite{aielloentropy} is active as a function of time measured in convective time scales $t_c$.}
    \label{fig:perctgv}
\end{figure}

The results of the experiment are shown in \autoref{fig:blowuptgv} and \autoref{fig:perctgv}. Shortly after $t = 20t_c$, a crash occurs in the singular KEEP-S flux as can be seen from the blow-up of the entropy in \autoref{fig:blowuptgv}. In \autoref{fig:perctgv}, it can be seen that a significant number of cells require switching up to $t\approx 5t_c$, and the percentage never reaches zero. This drives the simulation to become unstable. The KEEP-Q flux \cite{kuyahigh, tamakicomprehensive} remains stable, but the entropy error $\varepsilon_{\mathcal{S}}$ starts to increase significantly in the second half of the simulation. Our KEEP-DG flux conserves entropy up to machine-precision accuracy throughout the first half of the simulation. It experiences a small increase in error around the same time as the KEEP-Q at $t = 20 t_c$. However, the total size of the entropy error remains close to machine precision and orders of magnitude lower than the other two fluxes. As both our KEEP-DG flux and the KEEP-Q flux experience an increase in entropy error at the same time, it is likely that time-integration errors start playing a role at the level of under-resolution attained at $t = 20 t_c$.

\subsection{Viscous transcritical Taylor-Green vortex}\label{sec:vtgv}
To demonstrate the performance of our KEEP-DG flux for resolved turbulence, we will also perform a viscous transcritical TGV experiment \cite{wanghighorder, debonissolutions} using the full compressible Navier-Stokes equations:
\begin{align}
    \ppto{\begin{bmatrix}
        \rho \\
        \bm{m} \\
        E
    \end{bmatrix}}{t} + \sum_{i=1}^d &\ppto{\begin{bmatrix}
        m_i \\
        v_i(\bm{u})  \bm{m} + p(\bm{u})\bm{\delta}_i \\
        v_i(\bm{u}) (E + p(\bm{u}))
    \end{bmatrix}}{x_i} 
     = \sum_{i=1}^d \ppto{
    \begin{bmatrix}
        0 \\
        \bm{\tau}^i(\bm{u}) \\
        \bm{\tau}^i(\bm{u}) \cdot \bm{v}(\bm{u}) - q_i(\bm{u})
    \end{bmatrix}}{x_i} \label{eq:NS} . %&\qquad \qquad
\end{align}
Here, $\bm{\tau} $ and $ \bm{q}$ are differential operators that give the stress tensor and heat flux vector, respectively. The system is closed by introducing so-called constitutive laws for the stress tensor and the heat flux, defining the particular form of the differential operators, in addition to an EoS. To demonstrate that our KEEP-DG flux provides accurate results and is useful in many practical situations, we will use state-of-the-art industrial-standard constitutive laws and EoS for transcritical $\mathrm{CO_2}$. As EoS we will use the high-accuracy Kunz-Wagner (KW) EoS \cite{kunzgerg}. We assume $\mathrm{CO_2}$ is Newtonian so that the stress tensor can be modeled as:
\begin{equation*}
    \bm{\tau}(\bm{u}) = 2\mu(\bm{u}) \left[\frac{1}{2}(\nabla \bm{v} + \nabla \bm{v}^T) - \frac{1}{3}(\nabla \cdot \bm{v})I \right],
\end{equation*}
and that the heat flux follows Fourier's law:
\begin{equation}
    \bm{q}(\bm{u}) = -\kappa(\bm{u}) \nabla T. \label{eq:fourier}
\end{equation}
The transport coefficients $\mu : \mathcal{U} \rightarrow \mathbb{R}$ and $\kappa : \mathcal{U} \rightarrow \mathbb{R}$ are the dynamic viscosity and heat conductivity, respectively. The dynamic viscosity and heat conductivity are computed using state-of-the-art models for $\mathrm{CO_2}$, given by \cite{Laeseckereference} and \cite{huberreference}, respectively. In our definition of the stress tensor constitutive law, we have made Stokes' hypothesis and set the bulk viscosity \cite{larssonturbulence, nematidirect} to zero. We summarize all models used to close the compressible Navier-Stokes equations \eqref{eq:NS} in \autoref{tab:co2models}. Their discretization is outlined in \autoref{app:viscdisc}. 

A canonical test case for high-order methods \cite{wanghighorder} is the viscous TGV as in \eqref{eq:tgv_initial} with $\mathrm{Re}= 1600$ and $\mathrm{Ma}= 0.1$, where:
\begin{equation*}
    \mathrm{Re} := \frac{\rho_0 V_0 L}{\mu(\rho_0,T_0)},
\end{equation*}
is the Reynolds number. The canonical case applies an ideal-gas EoS and constant transport properties $\mu, \kappa$. Here, we will extend this case to the highly transcritical real-gas regime. As this case is nearly incompressible, we will also run a more compressible test case with $\mathrm{Ma}=0.3$ to demonstrate that our KEEP-DG flux can handle compressibility effects. Our choices of characteristic variables are shown in \autoref{tab:tgparams}. They are specifically chosen so that the state is highly transcritical, as determined by the KW EoS.

\begin{table}
\centering
\begin{tabular}{lll}
\hline
Quantity          & Reference                                           & Critical \\ 
                  &                                                     & enhancement \\ \hline
Equation of state & Kunz, Klimeck,                                      & -                    \\
                  & Wagner \& Jaeschke \cite{kunzgerg}                   &                       \\
Dynamic viscosity & Laesecke \& Muzny \cite{Laeseckereference}          & No                   \\
Bulk viscosity    & -                                                   & -                    \\
Heat conductivity & Huber, Sykioti,                                      & Yes               \\ 
                  & Assael \& Perkins  \cite{huberreference}                  &                       \\
\hline
\end{tabular}
\caption{Overview of physical models used for the simulation of the Taylor-Green vortex with transcritical $\mathrm{CO_2}$. Many models allow for an enhancement of accuracy around the critical point. If such an enhancement was available in the reference, we indicated whether we used it in our simulation. As done in \cite{kunzgerg}, we used \cite{jaeschkeideal} for the ideal-gas part of the equation of state. The bulk viscosity was set to zero according to Stokes' hypothesis \cite{larssonturbulence}. }
\label{tab:co2models}
\end{table}

\begin{table}
\centering
\begin{tabular}{lll}
\hline
Symbol & Value                      & Unit                         \\ \hline
$\rho_0$   & $1.198 \cdot \rho_c$   & $\kg \cdot \meter^{-3}$      \\
$T_0$     & $1.1 \cdot T_c $        & $\kelv$                      \\
$p_0$     & $p(\rho_0,T_0)$         & $\pascal$                    \\
$V_0$     & $\mathrm{Ma} \cdot c(\rho_0, T_0)$                      & $\meter \cdot \second^{-1}$  \\
$L$        & $1600 \cdot \mu(\rho_0,T_0) / (\rho_0 V_0)$   & $\meter$                     \\ \hline
\end{tabular}
\caption{Choice of characteristic quantities for the simulation of the Taylor-Green vortex with transcritical $\mathrm{CO_2}$, $\mathrm{Ma} \in \{0.1,0.3\}$. This initial condition is highly transcritical using the Kunz-Wagner EoS \cite{kunzgerg}. Here, $\rho_c,T_c$ are as in \autoref{tab:eosparams}.}
\label{tab:tgparams}
\end{table}

We will use a uniform Cartesian mesh of $N_x^3$ cells with $N_x = 512$ in each coordinate direction. The turbulent convective time scale is $t_c = L / V_0$. We will integrate in time until $t_f = 20t_c$ and use $N_t = 20 \cdot 2048$ time steps. Using the theory of incompressible turbulence \cite{nieuwstadturbulence} as an approximation, this is sufficient to resolve the smallest turbulent spatial and temporal scales. The choice of time step size leads to $\mathrm{CFL} \approx 0.43$ throughout the simulation. The high-accuracy KW EoS results in significant memory consumption, so we use Wray's third-order low-storage Runge-Kutta method \cite{wrayminimal} for the time integration to minimize RAM usage. For the Euler term of \eqref{eq:NS} we will use our KEEP-DG flux \eqref{eq:numflux}, using the symmetrized Itoh-Abe discrete gradient operator \eqref{eq:siadg} \cite{itohhamiltonian}. For the viscous stresses and heat fluxes, we use standard second-order accurate central schemes for spatially varying transport coefficients \cite{wesselingprinciples}. The quite large mesh in combination with the expensive KW EoS motivates us to use the multi-GPU capabilities of HelmEOS2 \cite{kleinHelmEOS2}. This experiment was therefore performed on a full GPU node of Snellius, consisting of four Nvidia H100 GPUs. To both validate our multi-GPU implementation and assess grid convergence, we have run an ideal-gas validation case on the same mesh, using \cite{debonissolutions} as a reference. The results, as discussed in more detail in \autoref{app:igvalid}, indicate that the implementation is indeed correct and, although not exactly grid-converged, quite close to the reference solution.

\begin{figure}[h!]
  \makebox[\textwidth][c]{\includegraphics[width=\textwidth]{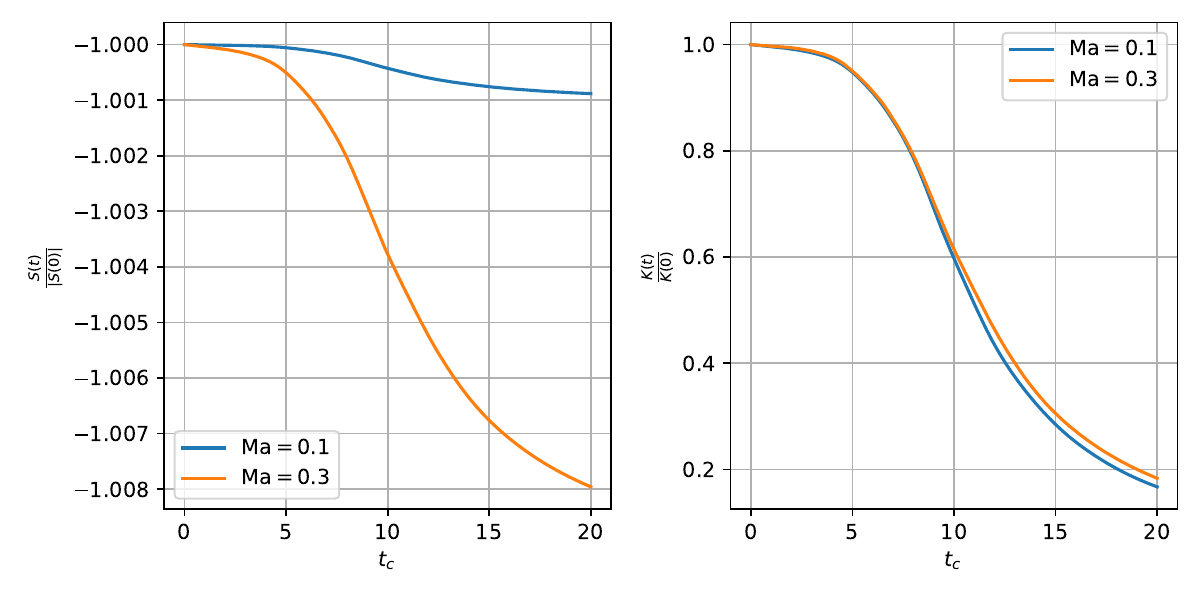}}%
  \caption{Evolution of the total entropy (left) and kinetic energy (right) during the viscous transcritical TGV experiment for different Mach numbers. Time is measured in terms of convective time scales $t_c$.}
  \label{fig:stkt}
\end{figure}

\begin{figure}[h!]
  \makebox[\textwidth][c]{\includegraphics[width=\textwidth]{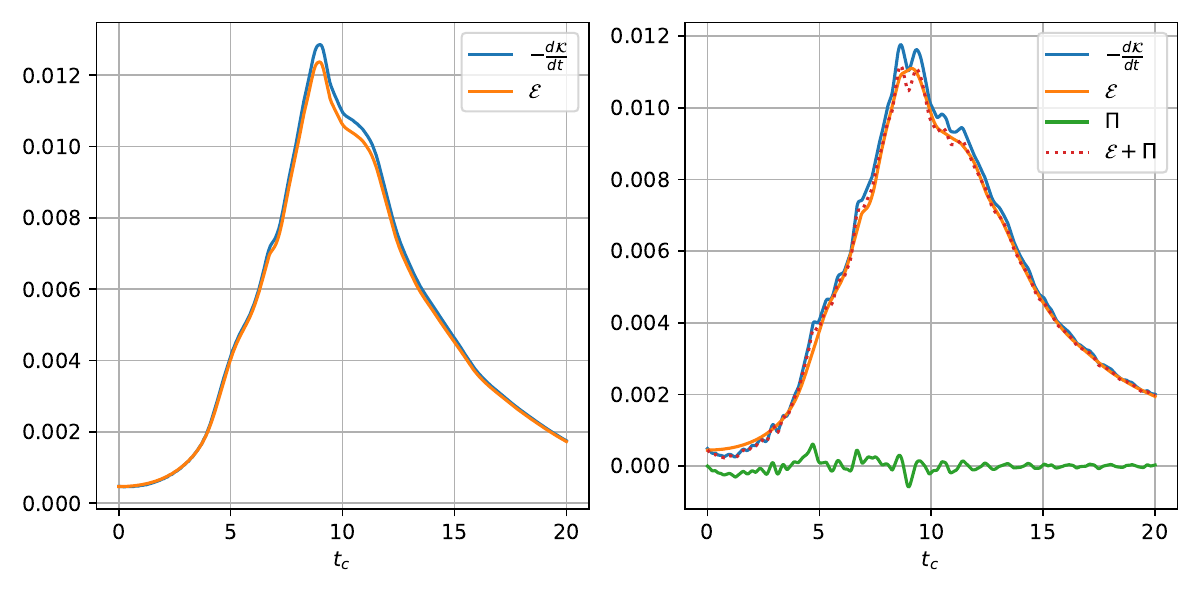}}%
  \caption{Evolution of the kinetic-energy dissipation rate $\dudt{\mathcal{K}}$ and various terms in the total kinetic-energy balance \eqref{eq:kinenbudget} for $\mathrm{Ma}=0.1$ (left) and $\mathrm{Ma}=0.3$ (right). All quantities are normalized by multiplication with the factor $\frac{t_c}{\rho_0 V_0^2 |\Omega|}$ as further elaborated on in \autoref{app:igvalid}. Time is measured in terms of convective time scales $t_c$.}
  \label{fig:dkdtv}
\end{figure}

It can be shown that on periodic domains, for fluids with variable dynamic viscosity, the total kinetic energy satisfies the following balance:
\begin{equation}
    \dudt{\mathcal{K}}= \Pi - \underbrace{(\mathcal{D} + \mathcal{E} + \mathcal{M})}_{\geq 0},
    \label{eq:kinenbudget}
\end{equation}
where:
\begin{align*}
    \Pi &= \int_{\Omega} p(\bm{u}) \nabla\cdot\bm{v} \hbox{ } d\Omega, \\
    \mathcal{D} &= \int_{\Omega} \frac43\mu(\bm{u}) (\nabla \cdot \bm{v})^2 \hbox{ } d\Omega, \\
    \mathcal{E} &= \int_{\Omega} \mu(\bm{u}) ||\nabla \times \bm{v}||^2 \hbox{ } d\Omega, 
\end{align*}
and $\mathcal{M}$ is a (small) variable-viscosity correction term that we do not report (cf. \cite{wuvorticity}) and neglect. We will assess the physical fidelity of our numerical scheme by its ability to reproduce this balance discretely. Note that in the nearly incompressible case $\mathrm{Ma}= 0.1$, both $\Pi$ and $\mathcal{D}$ are small. For $\mathrm{Ma}=0.1$ we thus expect $-\frac{d\mathcal{K}}{dt} \approx \mathcal{E}$. In \autoref{fig:stkt}, we show the total entropy and total kinetic-energy evolution for both Mach number cases as a function of time. We use an $8$-th order temporal central difference scheme on these graphs to obtain $\frac{d\mathcal{K}}{dt}$, which we plot in \autoref{fig:dkdtv}. Due to the lack of grid convergence, there is no exact match, but $\mathcal{E}$ and $-\frac{d\mathcal{K}}{dt}$ are very close for $\mathrm{Ma}=0.1$. For $\mathrm{Ma}=0.3$, the evolution of $\Pi+\mathcal{E}$ closely follows the evolution of $-\frac{d\mathcal{K}}{dt}$. This highlights the dissipation-free nature of our KEEP-DG flux, and that scheme can accurately model the turbulent flow of a transcritical fluid with a modest grid. To give further insight into the local behavior of the solution, we also plot the normalized vorticity magnitude $\omega$ defined as:
\begin{equation}
    \omega := ||\nabla \times \bm{v}||\frac{L}{V_0}, \label{eq:normvort}
\end{equation}
at time $t = 10 t_c$, just after the peak dissipation of kinetic energy takes place. In \autoref{fig:vort03} we plot volume renders of $\omega$ for $\mathrm{Ma}= 0.3$. We note that the $\omega$-field for $\mathrm{Ma}=0.1$ is very similar. Realising that the dissipation is dominated by $\mathcal{E}$, comparing the definitions of $\omega$ and $\mathcal{E}$ shows that large local values of $\omega$ indicate where the most kinetic energy is being dissipated. 

%\begin{figure}
%  \makebox[\textwidth][c]{\includegraphics[width=1.3\textwidth]{figures/taylor_green/vort_01.png}}%
%  \caption{Volume render of the normalized vorticity magnitude $\omega$ \eqref{eq:normvort} at $t=10t_c$ and $\mathrm{Ma}=0.1$. In this volume render, regions of low vorticity ($\omega<2$) are made transparent, and regions of high vorticity ($7<\omega$) are made increasingly transparent as $\omega$ decreases. The region with $2<\omega<7$ is made nearly opaque. (Render produced with ParaView \cite{ParaView})}
%  \label{fig:vort01}
%\end{figure}

\begin{figure}
  \makebox[\textwidth][c]{\includegraphics[width=1.3\textwidth]{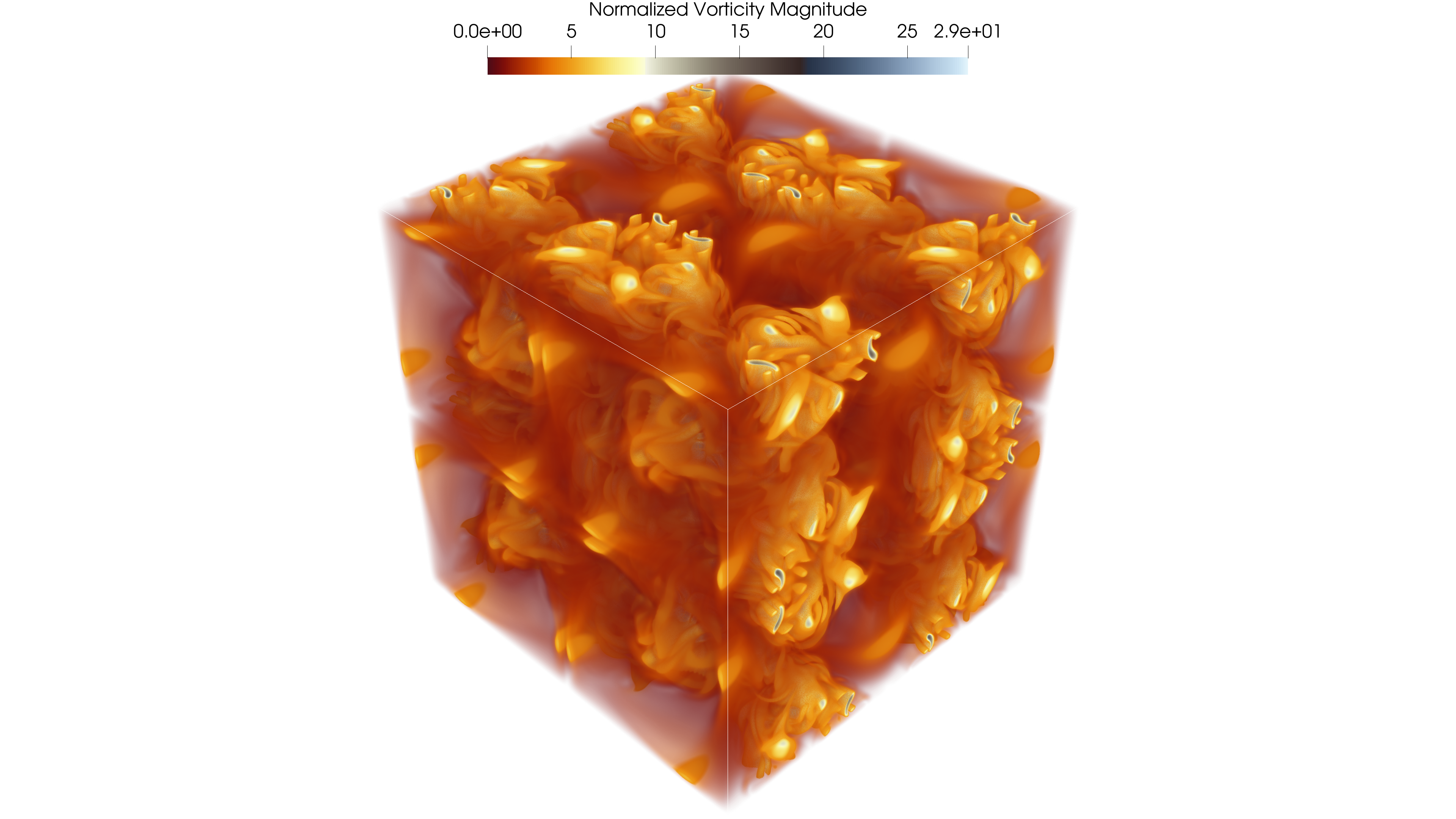}}%
  \caption{Volume render of the normalized vorticity magnitude $\omega$ \eqref{eq:normvort} at $t=10t_c$ and $\mathrm{Ma}=0.3$. In this volume render, regions of low vorticity ($\omega<2$) are made transparent, and regions of high vorticity ($7<\omega$) are made increasingly transparent as $\omega$ decreases. The region with $2<\omega<7$ is made nearly opaque. (Render produced with ParaView \cite{ParaView})}
  \label{fig:vort03}
\end{figure}
\section{Conclusion}
We have proposed a framework that generalizes the tools usually applied to the analysis of convex entropy functions \cite{tadmorentropy, LeVequefinite, dafermoshyperbolic} to non-convex, non-conserved secondary functions of smooth solutions of conservation laws. Within this framework, we showed that general balance laws of secondary quantities are satisfied by solutions of conservation laws whenever the different terms in those balance laws satisfy primal or dual compatibility conditions. These compatibility conditions naturally extend those of entropy pairs and thereby place general secondary quantities into a unified theoretical setting. At the same time, the analysis highlights structural differences with the entropy case: the balance laws of general secondary quantities may involve non-conservative work terms, so that the domain integral of a general secondary quantity is not necessarily conserved for smooth solutions even in the absence of boundary effects. Moreover, the framework reveals additional structural properties that are specific to general non-conservative secondary structures. In particular, the generalized compatibility relations imply that the secondary structure necessarily satisfies a null-consistency property whenever the Hessian of the secondary quantity has a non-trivial kernel.

We have further shown that the generalized compatibility conditions of secondary structures admit discrete analogues, in close analogy with the discrete compatibility conditions of convex entropy functions. In particular, the discrete analogue of the generalized dual compatibility condition naturally yields a numerical flux condition that ensures that the discretization satisfies discrete balance laws of secondary quantities. This condition recovers Tadmor's condition when the secondary structure is an entropy pair, showing that the classical entropy-conservative setting is contained as a special case of the present theory. For non-conservative secondary structures, there is a degree of freedom in the choice of the numerical work term. To ensure a well-posed flux condition, this choice of numerical work term must satisfy additional structural properties. Specifically, a weak discrete analogue of the null-consistency property is required to ensure that the flux condition is well posed. In addition, a stronger form of discrete null-consistency is needed for the generalized Tadmor conditions to be solvable by jump expansions. Furthermore, we demonstrated that discrete gradient operators from geometric integration \cite{mclachlangeometric} provide an effective tool for solving the generalized Tadmor conditions, thereby supplying a constructive mechanism for the design of admissible numerical fluxes.

This framework enables the design of stable non-dissipative, kinetic-energy- and entropy-preserving (KEEP) numerical fluxes for supercritical fluids, which require non-ideal general equations of state (GEoS). Using the proposed framework, we derived a GEoS-KEEP numerical flux for the Euler equations \eqref{eq:euler}, referred to as KEEP-DG. The resulting flux \eqref{eq:numflux} removes a singularity present in the recently proposed GEoS-KEEP flux in \cite{aielloentropy}, demonstrating the practical value of the generalized framework. We also showed, through a number of experiments on a transcritical extension of a canonical compressible turbulent test case, that no instabilities occur for the KEEP-DG flux and that it remains conservative to machine precision. These results indicate that KEEP-DG, derived from the generalized Tadmor framework, provides a dissipation-free and robust approach for accurately resolving compressible turbulent flows of transcritical fluids with state-of-the-art equations of state and transport models on modest grids. 

Although we have focused on kinetic energy and entropy in this work, we note that another popular secondary quantity to consider in the numerical simulation of compressible flow is pressure \cite{ranochapreventing, bernadeskinetic, demichelenovel}. It would be especially interesting to see whether the framework can be used to obtain numerical fluxes that satisfy the so-called pressure-equilibrium-preservation (PEP) property for real gases \cite{bernadeskinetic}. This type of structure-preservation property is fundamentally different from what we consider in this work, as it does not aim to satisfy an extra balance law, but only to exactly reproduce certain solutions of the conservation law \eqref{eq:cl}. However, pressure balance laws are still often considered in the analysis of the PEP property \cite{toutantgeneral, bernadeskinetic}, and thus our framework could be useful here. Initial investigations of some numerical work terms for a discrete pressure balance law show that for real gases, weak discrete null-consistency is not trivial and requires extra conditions on the equation of state (EoS). In turn, verifying these conditions requires detailed knowledge of the respective EoS. We leave this as future work.

\section*{CRediT authorship contribution statement}
\textbf{R.B. Klein}: conceptualization, methodology, software, validation, formal analysis, investigation, writing - original draft \\
\textbf{B. Sanderse}: writing - review \& editing, supervision \\
\textbf{P. Costa}: writing - review \& editing, supervision, resources \\
\textbf{R. Pecnik}: writing - review \& editing, supervision \\
\textbf{R.A.W.M. Henkes}: writing - review \& editing, supervision, project administration, funding acquisition \\

\section*{Software and reproducibility statement}
The code used to generate the results of this paper is available at \href{https://github.com/rbklein/HelmEOS2}{github.com/rbklein/HelmEOS2}. It is released under the MIT license. An archived version is available at \href{https://zenodo.org/records/18985427}{zenodo.org/records/18985427}. The data and scripts to generate the figures in \autoref{sec:numexps}, in addition to extra solution snapshots, are available at \href{https://zenodo.org/records/18982907}{zenodo.org/records/18982907} and can be used with the archived version of the code. The results in \autoref{sec:DW} were obtained on a desktop CPU. The results in \autoref{sec:tgv} were obtained using a single Nvidia H100 GPU on the Dutch National Supercomputer Snellius. The results in \autoref{sec:vtgv} and \autoref{app:igvalid} were obtained using four Nvidia H100 GPUs on the Dutch National Supercomputer Snellius.

\section*{Declaration of Generative AI and AI-assisted technologies in the writing process}
During the preparation of this work, the authors used ChatGPT 5.2, 5.4, and Grammarly to improve readability. After using these tools/services, the authors reviewed and edited the content as needed and take full responsibility for the content of the publication.

\section*{Declaration of competing interests}
The authors declare that they have no known competing financial interests or personal relationships that could have appeared to influence the work reported in this paper.

\section*{Data availability}
Data will be made available on request. %result scripts and data available on zenodo

\section*{Acknowledgements}
R.B. Klein gratefully acknowledges the funding for this project obtained from Delft University of Technology. The three-dimensional GPU runs performed on the supercomputer Snellius (based at SURF, The Netherlands) were supported by the NWO Grant no.\ \texttt{2024/ENW/01704792}.

\appendix 
\section{Entropy conservation using the discrete dual compatibility condition}\label{app:ddual}
Let $\bm{v}_{\bm{I}} := \bm{v}(\bm{u}_{\bm{I}})$ be some grid function. We define the following compact notation:
\begin{align*}
    \Delta \bm{v}_{\bm{I}+\bm{1}_j/2} &:= \bm{v}_{\bm{I}+\bm{1}_j} - \bm{v}_{\bm{I}}, \quad \overline{\bm{v}}_{\bm{I}+\bm{1}_j/2} := \frac{\bm{v}_{\bm{I}+\bm{1}_j} + \bm{v}_{\bm{I}}}{2},
\end{align*}
and two identities between them:
\begin{gather}
    \bm{v}_{\bm{I}} = \overline{\bm{v}}_{\bm{I}\pm\bm{1}_j/2} \mp \frac{1}{2}\Delta \bm{v}_{\bm{I}\pm\bm{1}_j/2}, \label{eq:usefulid_app} \\
    \frac{1}{2}\Delta \bm{v}_{\bm{I}+\bm{1}_j/2}+\frac{1}{2}\Delta \bm{v}_{\bm{I}-\bm{1}_j/2} = \overline{\bm{v}}_{\bm{I}+\bm{1}_j/2} - \overline{\bm{v}}_{\bm{I}-\bm{1}_j/2}.
    \label{eq:commdifavg_app}
\end{gather}
Discrete entropy conservation for a scheme with a numerical flux satisfying the discrete dual compatibility condition is then shown as follows:
{\allowdisplaybreaks
\begin{align*}
    0 &= \bm{\eta}_{\bm{I}}^T\left( \dudt{\bm{u}_{\bm{I}}} + \sum_{j=1}^d \frac{1}{\Delta x_j}\left(\bm{f}_{\bm{I}+\bm{1}_j/2}^j - \bm{f}_{\bm{I}-\bm{1}_j/2}^j \right)\right) \\
    &= \bm{\eta}_{\bm{I}}^T\dudt{\bm{u}_{\bm{I}}} + \sum_{j=1}^d \frac{1}{\Delta x_j} \bm{\eta}_{\bm{I}}^T\left(\bm{f}_{\bm{I}+\bm{1}_j/2}^j - \bm{f}_{\bm{I}-\bm{1}_j/2}^j \right) \\
    &\overset{\eqref{eq:usefulid_app}}{=} \dudt{s_{\bm{I}}} + \sum_{j=1}^d \frac{1}{\Delta x_j} \left(\overline{\bm{\eta}}_{\bm{I}+\bm{1}_j/2}^T\bm{f}_{\bm{I}+\bm{1}_j/2}^j - \overline{\bm{\eta}}_{\bm{I}-\bm{1}_j/2}^T\bm{f}_{\bm{I}-\bm{1}_j/2}^j\right) \\
    &\qquad \qquad \qquad - \sum_{j=1}^d \frac{1}{\Delta x_j} \frac{1}{2}\left(\left(\bm{f}_{\bm{I}+\bm{1}_j/2}^j\right)^T\Delta \bm{\eta}_{\bm{I}+\bm{1}_j/2} + \left(\bm{f}_{\bm{I}-\bm{1}_j/2}^j\right)^T\Delta \bm{\eta}_{\bm{I}-\bm{1}_j/2}  \right) \\
    &\overset{\eqref{eq:numeflux}}{=} \dudt{s_{\bm{I}}} + \sum_{j=1}^d \frac{1}{\Delta x_j} \left(\mathcal{F}_{s,\bm{I}+\bm{1}_j/2}^j - \mathcal{F}_{s,\bm{I}-\bm{1}_j/2}^j \right) + \sum_{j=1}^d \frac{1}{\Delta x_j}\left(\overline{\psi}_{s, \bm{I}+\bm{1}_j/2}^j - \overline{\psi}_{s,\bm{I}-\bm{1}_j/2}^j\right) \\
    &\qquad \qquad \qquad - \sum_{j=1}^d \frac{1}{\Delta x_j} \frac{1}{2}\left(\left(\bm{f}_{\bm{I}+\bm{1}_j/2}^j\right)^T\Delta \bm{\eta}_{\bm{I}+\bm{1}_j/2} + \left(\bm{f}_{\bm{I}-\bm{1}_j/2}^j\right)^T\Delta \bm{\eta}_{\bm{I}-\bm{1}_j/2}  \right) \\
    &\overset{\eqref{eq:commdifavg_app}}{=} \dudt{s_{\bm{I}}} + \sum_{j=1}^d \frac{1}{\Delta x_j} \left(\mathcal{F}_{s,\bm{I}+\bm{1}_j/2}^j - \mathcal{F}_{s,\bm{I}-\bm{1}_j/2}^j \right) + \sum_{j=1}^d \frac{1}{\Delta x_j} \frac{1}{2}\left(\Delta{\psi}_{s, \bm{I}+\bm{1}_j/2}^j + \Delta{\psi}_{s,\bm{I}-\bm{1}_j/2}^j\right) \\
    &\qquad \qquad \qquad - \sum_{j=1}^d \frac{1}{\Delta x_j} \frac{1}{2}\left(\left(\bm{f}_{\bm{I}+\bm{1}_j/2}^j\right)^T\Delta \bm{\eta}_{\bm{I}+\bm{1}_j/2} + \left(\bm{f}_{\bm{I}-\bm{1}_j/2}^j\right)^T\Delta \bm{\eta}_{\bm{I}-\bm{1}_j/2}  \right) \\
    &\overset{\eqref{eq:prototadmor}}{=} \dudt{s_{\bm{I}}} + \sum_{j=1}^d \frac{1}{\Delta x_j} \left(\mathcal{F}_{s,\bm{I}+\bm{1}_j/2}^j - \mathcal{F}_{s,\bm{I}-\bm{1}_j/2}^j \right),
\end{align*}}
where in the third equality we used identities \eqref{eq:usefulid_app}, in the fourth equality definition \eqref{eq:numeflux}, in the fifth equality identity \eqref{eq:commdifavg_app} and in the final equality the discrete dual compatibility condition \eqref{eq:prototadmor}. Note that in this way the discrete derivation exactly mimics the derivation in the differential setting. For the discrete dual compatibility condition, which is due to Tadmor and beares his name, we provide a formal definition.
\section{Discrete gradient operators}\label{app:dgs}
We list several well-known discrete gradient operators. Let $\mathcal{P}$ be an open subset of $\mathbb{R}^2$ and $g : \mathcal{P} \rightarrow \mathbb{R}$ a differentiable function. We consider $\mathcal{P} \subseteq \mathbb{R}^2$ to simplify the notation of the Itoh-Abe discrete gradient operator.
\begin{itemize}
    \item Mean value discrete gradient operator (MVDG) \cite{hartenupstream, mclachlangeometric}:
    \begin{equation*}
        \widetilde{\nabla}g(\bm{p}_1,\bm{p}_2) := \int_{0}^1 \nabla g(\bm{p}_1+\lambda(\bm{p}_2-\bm{p}_1)) d\lambda.
    \end{equation*}
    \item Gonzalez discrete gradient operator (GDG) \cite{gonzaleztime}. Assume $\bm{p}_1 \neq \bm{p}_2$: 
    \begin{equation*}
        \widetilde{\nabla}g(\bm{p}_2,\bm{p}_1) := \nabla g\left(\frac{\bm{p}_1+\bm{p}_2}{2}\right) + \frac{\Delta g(\bm{p}_1, \bm{p}_2) - \nabla g\left(\frac{\bm{p}_1+\bm{p}_2}{2}\right)^T(\bm{p}_2-\bm{p}_1)}{||\bm{p}_2-\bm{p}_1||^2}(\bm{p}_2-\bm{p}_1).
    \end{equation*}
    When $\bm{p}_1 = \bm{p}_2 = \bm{p}$:
    \begin{equation*}
        \widetilde{\nabla}g(\bm{p}_2,\bm{p}_1) = \nabla g(\bm{p}).
    \end{equation*}
    \item Symmetrized Itoh-Abe discrete gradient operator (SIADG) \cite{itohhamiltonian}. Let $\bm{p}_1=[x_1,y_1]^T, \bm{p}_2 =[x_2,y_2]^T$ and assume $x_1 \neq x_2$ and $y_1 \neq y_2$: 
    \begin{equation}
        \widetilde{\nabla}g(\bm{p}_1,\bm{p}_2) := \sum_{k=1}^2 \frac12 \text{diag}(\bm{p}_2-\bm{p}_1)^{-1}\begin{bmatrix}
            g(x_2,y_k) - g(x_1,y_k)\\
            g(x_k,y_2) - g(x_k,y_1)
        \end{bmatrix}.
        \label{eq:siadg}
    \end{equation}
    In case $x_1 = x_2 = x$, but $y_1 \neq y_2$, SIADG is defined as:
    \begin{equation}
        \left(\widetilde{\nabla}g(\bm{p}_1,\bm{p}_2)\right)_1 = \frac12 \popt{g}{x}(x,y_1) + \frac12 \popt{g}{x}(x,y_2),
        \label{eq:siadgexact}
    \end{equation}
    and a similar approach is taken when $y_1=y_2$, but $x_1 \neq x_2$, for the second SIADG component. In practical computations, as e.g.\ $x_2 \rightarrow x_1$, finite-precision effects start dominating the approximation error of SIADG far before $x_1 = x_2$. Let $\epsilon \in \mathbb{R}_+$ denote machine precision. It is a classical result \cite{vuiknumerical}, that for central differences this starts to occur at $|x_2-x_1| = \mathcal{O}(\sqrt{\epsilon})$. As a practical SIADG algorithm, we therefore suggest using \eqref{eq:siadg} when e.g.\ $|x_2 - x_1| > \text{tol}_x$ and otherwise switch to the exact gradient evaluation \eqref{eq:siadgexact}. Here, $\text{tol}_x$ is taken as:
    \begin{equation}
        \text{tol}_x = 10 \epsilon + \sqrt{\epsilon}\max \{|x_1|,|x_2| \},
        \label{eq:siadgswitch}
    \end{equation}
    and a similar expression is used for the $y$-component.
\end{itemize}

\section{Helmholtz-energy equation of state}\label{app:thermo}
In our applications we are mainly interested in EoS given by a specific\footnote{per unit mass} Helmholtz energy $\mathcal{A} : \mathcal{T} \rightarrow \mathbb{R}$. Here, $\mathcal{T}\subset \mathbb{R}^2$ is the thermodynamic state space of natural variables, for the Helmholtz energy these are mass density $\rho$ and temperature $T$. In many applications of fluid dynamics the values $\rho, T$ are assumed as pointwise evaluations of their associated fields $\rho, T : \mathbb{R}^d \times [0,t_f] \rightarrow \mathbb{R}_+$, which we denote with the same symbol. We gather the natural variables in a vector $\bm{\tau} \in \mathcal{T}$ to ease notation. The Helmholtz energy is defined as the Legendre transformation with respect to specific entropy of the first law of thermodynamics \cite{holystthermodynamics}:
\begin{align}
    \mathcal{A}(\bm{\tau}) &= e(\bm{\tau}) - T \sigma(\bm{\tau}), \label{eq:helm} \\
    d\mathcal{A} &= \frac{p(\bm{\tau})}{\rho^2} d\rho-\sigma(\bm{\tau}) dT, \label{eq:diffhelm}
\end{align}
where $e : \mathcal{T} \rightarrow \mathbb{R}$ is the specific internal energy, $\sigma : \mathcal{T}\rightarrow \mathbb{R}$ is the specific entropy, and $p : \mathcal{T} \rightarrow \mathbb{R}_+$ is the thermodynamic pressure now expressed as a function of natural variables. Derived thermodynamic quantities can be computed by comparison of the differential of $\mathcal{A}$ against the identity provided in \eqref{eq:diffhelm}:
\begin{align}
    \sigma(\bm{\tau}) &= -\popt{\mathcal{A}}{T}(\bm{\tau}), \label{eq:entropythermo}\\
    p(\bm{\tau}) &= \rho^2 \popt{\mathcal{A}}{\rho}(\bm{\tau}), \label{eq:pressurethermo}\\
    e(\bm{\tau}) &= \mathcal{A}(\bm{\tau}) - T \popt{\mathcal{A}}{T}(\bm{\tau}). \label{eq:internalenergy}
\end{align}
Another derived quantity we will make use of is the specific Gibbs energy $g : \mathcal{T} \rightarrow \mathbb{R}$, whose definition follows from thermodynamic considerations \cite{holystthermodynamics}:
\begin{equation}
    g(\bm{\tau}) = \mathcal{A}(\bm{\tau}) + \rho \popt{\mathcal{A}}{\rho}(\bm{\tau}). \label{eq:gibbs}
\end{equation}
We can then continue to define the required map $\mathcal{U} \rightarrow \mathcal{T}$ by considering the following definition of the total-energy density $E = \rho e + k$ where:
\begin{equation}
    k(\bm{u}) = \frac{1}{2}\frac{||\bm{m}||^2}{\rho},
    \label{eq:kineticenergy}
\end{equation}
is the fluid's kinetic energy. With this definition of the total energy we can express the internal energy explicitly in terms of conservative variables $e(\bm{u}) = (E - k(\bm{u}))/\rho$. Realizing that the mass density in the conservative variables is identical to the mass density in the thermodynamic state space variables, we are left to solve:
\begin{equation*}
    E - k(\bm{u}) = \rho \mathcal{A}(\bm{\tau}) - \rho T \popt{\mathcal{A}}{T}(\bm{\tau}),
\end{equation*}
for $T$ at fixed $\rho$ to finish the map $\mathcal{U}\rightarrow \mathcal{T}$. The previous equation follows from equating $e(\bm{u})=e(\bm{\tau})$. If a solution $T$ exists the physical consideration that $\popt{e}{T}(\bm{\tau}) > 0$ for all $\bm{\tau} \in \mathcal{T}$ and thus that $e(\bm{\tau})$ is injective at fixed $\rho$ lets us conclude $T$ is unique. From here, we will simply assume that a solution indeed exists, as is often done. Finally, we will also use the squared speed of sound $c^2 : \mathcal{T} \rightarrow \mathbb{R}_+$, which in terms of Helmholtz energies satisfies:
\begin{equation}
    c^2(\bm{\tau}) = 2 \rho \popt{\mathcal{A}}{\rho}(\bm{\tau}) + \rho^2 \ppoppt{\mathcal{A}}{\rho}(\bm{\tau}) - \left[\rho \frac{\partial \mathcal{A}}{\partial \rho \partial T}(\bm{\tau}) \right]^2 /  \left[ \ppoppt{\mathcal{A}}{T}(\bm{\tau}) \right],
    \label{eq:speedofsound}
\end{equation}
and follows from computing the derivative of pressure $p$ with respect to $\rho$ while keeping the entropy $\sigma$ constant.

\section{Proposition proofs}\label{app:propprfs}
\subsection{Consistency}
\begin{proof}
    By consistency of the arithmetic mean, the flux \eqref{eq:numflux} can easily be seen to be consistent if the density mean \eqref{eq:densitymean} and internal-energy mean \eqref{eq:internalenergymean} are consistent. Let $\bm{u} \in \mathcal{U}$ and $[\rho,T] = \bm{\tau}(\bm{u})$. By the consistency property of discrete gradients \autoref{def:discgrad} and equations \eqref{eq:pressurethermo}, \eqref{eq:gibbs}: 
    \begin{align*}
        \widetilde{\rho}(\bm{u},\bm{u}) &= \frac{\widetilde{\nabla}_\rho (p\beta)(\bm{u},\bm{u})}{\widetilde{\nabla}_\rho (g\beta)(\bm{u},\bm{u})} \\
        &= \popt{p/T}{\rho}(\rho, T) / \popt{g/T}{\rho}(\rho, T) \\
        &= \left[ \frac{2 \rho}{T} \popt{\mathcal{A}}{\rho}(\rho, T) + \frac{\rho^2}{T}\ppoppt{\mathcal{A}}{\rho}(\rho, T) \right] / \left[\frac{2}{T} \popt{\mathcal{A}}{\rho}(\rho, T) + \frac{\rho}{T}\ppoppt{\mathcal{A}}{\rho}(\rho, T)\right] \\
        &= \rho.
    \end{align*}
    Since $\frac{\partial}{\partial \beta} = -T^2 \frac{\partial}{\partial T}$ and using \eqref{eq:gibbs}, \eqref{eq:pressurethermo} and \eqref{eq:internalenergy}, we have for the internal-energy mean \eqref{eq:internalenergymean}:
    \begin{align*}
        \widetilde{e}(\bm{u},\bm{u}) &= \widetilde{\nabla}_{\beta}(g\beta)(\bm{u},\bm{u}) - \frac{\widetilde{\nabla}_{\beta}(p\beta)(\bm{u},\bm{u})}{\widetilde{\rho}(\bm{u},\bm{u})} \\
        &= -T^2 \popt{g/T}{T}(\rho,T) + \frac{T^2}{\rho}\popt{p/T}{T}(\rho,T) \\
        &= -T^2 \left[-\frac{1}{T^2}\mathcal{A}(\rho,T) + \frac{1}{T}\popt{\mathcal{A}}{T}(\rho,T) - \frac{\rho}{T^2}\popt{\mathcal{A}}{\rho}(\rho,T) + \frac{\rho}{T}\frac{\partial^2\mathcal{A}}{\partial \rho \partial T}(\rho,T) \right] \\
        & \qquad \qquad \qquad+ \frac{T^2}{\rho}\left[-\frac{\rho^2}{T^2}\popt{\mathcal{A}}{\rho}(\rho,T) + \frac{\rho^2}{T}\frac{\partial^2\mathcal{A}}{\partial \rho \partial T}(\rho,T)  \right] \\
        &= \mathcal{A}(\rho,T) -  T\popt{\mathcal{A}}{T}(\rho,T) \\
        &= e(\rho,T).
    \end{align*}
\end{proof}

\subsection{Symmetry}
\begin{proof}
    Since, by assumption, the discrete gradients $\widetilde{\nabla}(p\beta), \widetilde{\nabla}(g\beta)$ are symmetric, the density mean \eqref{eq:densitymean} and internal-energy mean \eqref{eq:internalenergymean} are symmetric. Since the arithmetic mean is symmetric, all terms in the flux \eqref{eq:numflux} are therefore symmetric, and so it is also symmetric.
\end{proof}

\subsection{Density mean positivity}
\begin{proof}
    Suppose $\bm{u}_1,\bm{u}_2 \in \mathcal{U}$ are arbitrary points so that $\mathcal{A}$ is both convex in $1/\rho$ and $C^2$ on $\widetilde{\mathcal{T}}(\bm{u}_1,\bm{u}_2)$. The $\rho$-components of the symmetrized Itoh-Abe discrete gradients $\widetilde{\nabla}(p\beta), \widetilde{\nabla}(g\beta)$ are:
    \begin{align*}
        \widetilde{\nabla}_{\rho}(p\beta)(\bm{\tau}_1,\bm{\tau}_2) &= \sum_{\widetilde{T}\in \{T_1,T_2\}} \frac{1}{2} \frac{p(\rho_2, \widetilde{T}) / \widetilde{T} - p(\rho_1, \widetilde{T}) /\widetilde{T}}{\rho_2 - \rho_1},\\
        \widetilde{\nabla}_{\rho}(g\beta)(\bm{\tau}_1,\bm{\tau}_2) &= \sum_{\widetilde{T}\in \{T_1,T_2\}} \frac{1}{2} \frac{g(\rho_2, \widetilde{T}) / \widetilde{T} - g(\rho_1, \widetilde{T}) / \widetilde{T}}{\rho_2 - \rho_1}.
    \end{align*}
    By equations \eqref{eq:pressurethermo}, \eqref{eq:gibbs} and the assumption that $\mathcal{A} \in C^2(\widetilde{\mathcal{T}}(\bm{u}_1,\bm{u}_2))$, both $p(\cdot, \widetilde{T})/\widetilde{T}$ and $g(\cdot,\widetilde{T})/\widetilde{T}$, where $\widetilde{T} \in \{T_1,T_2\}$, are continuous and differentiable functions on the interval $[\rho_1, \rho_2]$. Thus, the mean value theorem (MVT) applies, and:
    \begin{equation*}
        \text{MVT: } \quad \text{there exist } \widetilde{\rho}_1,\widetilde{\rho}_2 \in [\rho_1,\rho_2] : \widetilde{\nabla}_{\rho}(p\beta)(\bm{\tau}_1,\bm{\tau}_2) = \sum_{i=1}^2 \frac{1}{2} \frac{\partial p / T}{\partial \rho}(\widetilde{\rho}_i, \widetilde{T}_i).
    \end{equation*}
    Note that $(\widetilde{\rho}_i, \widetilde{T}_i) \in \widetilde{\mathcal{T}}(\bm{u}_1,\bm{u}_2)$ for $i=1,2$. Considering \eqref{eq:pressurethermo}, for all $(\rho,T) \in \widetilde{\mathcal{T}}(\bm{u}_1\bm{u}_2)$ we have:
    \begin{equation*}
        \popt{p/T}{\rho} = \frac{2 \rho}{T} \popt{\mathcal{A}}{\rho} + \frac{\rho^2}{T}\ppoppt{\mathcal{A}}{\rho} = \frac{1}{\rho^2T} \ppoppt{\mathcal{A}}{(1/\rho)} > 0,
    \end{equation*}
    due to convexity in $1/\rho$ of $\mathcal{A}$ on $\widetilde{\mathcal{T}}(\bm{u}_1\bm{u}_2)$ and positivity of $(\rho,T) \in \widetilde{\mathcal{T}}(\bm{u}_1\bm{u}_2)$. Thus $\widetilde{\nabla}_{\rho}(p\beta)(\bm{\tau}_1,\bm{\tau}_2) > 0$. Similarly, we have:
    \begin{equation*}
        \text{MVT: } \quad \text{there exist } \widetilde{\rho}_1,\widetilde{\rho}_2 \in [\rho_1,\rho_2] : \widetilde{\nabla}_{\rho}(g\beta)(\bm{\tau}_1,\bm{\tau}_2) = \sum_{i=1}^2 \frac{1}{2} \frac{\partial g / T}{\partial \rho}(\widetilde{\rho}_i, \widetilde{T}_i),
    \end{equation*}
    with $(\widetilde{\rho}_i, \widetilde{T}_i) \in \widetilde{\mathcal{T}}(\bm{u}_1,\bm{u}_2)$ for $i=1,2$. Now considering \eqref{eq:gibbs}, for all $(\rho,T) \in \widetilde{\mathcal{T}}(\bm{u}_1\bm{u}_2)$ we have:
    \begin{equation*}
        \popt{g/T}{\rho} = \frac{2}{T} \popt{\mathcal{A}}{\rho} + \frac{\rho}{T}\ppoppt{\mathcal{A}}{\rho} = \frac{1}{\rho^3T} \ppoppt{\mathcal{A}}{(1/\rho)} > 0,
    \end{equation*}
    and therefore $\widetilde{\nabla}_{\rho}(g\beta)(\bm{\tau}_1,\bm{\tau}_2) > 0$.
\end{proof}

\subsection{Ideal gas behavior}
For an ideal gas, the specific Helmholtz energy is given as in \eqref{eq:IGEOS}, the pressure $p(\rho,T)$, computed using \eqref{eq:pressurethermo}, satisfies:
\begin{equation}
    p(\rho,T) = \rho \overline{R}T, 
    \label{eq:pideal}
\end{equation}
and the specific Gibbs energy, computed using \eqref{eq:gibbs}, satisfies:
\begin{equation}
    g(\rho,T) = -\frac{\overline{R}T}{\gamma - 1}\ln(T) + \overline{R}T\ln(\rho), 
    \label{eq:gideal}
\end{equation}
where we set $s_0 = 0$.
\begin{proof}
    Let $\bm{u}_1, \bm{u}_2 \in \mathcal{U}$ and $[\rho_1,T_1] = \bm{\tau}(\bm{u}_1)$, $[\rho_2,T_2] = \bm{\tau}(\bm{u}_2)$. We first analyze the density mean \eqref{eq:densitymean}. For the ideal-gas EoS, the symmetrized Itoh-Abe discrete gradient $\widetilde{\nabla}_{\rho}(p\beta)(\bm{u}_1,\bm{u}_2)$ satisfies:
    \begin{align*}
        \widetilde{\nabla}_{\rho}(p\beta)(\bm{u}_1,\bm{u}_2) &= \sum_{\widetilde{{T}}\in \{T_1,T_2\}} \frac12 \frac{p(\rho_2,\widetilde{T})/ \widetilde{T} - p(\rho_1,\widetilde{T})/ \widetilde{T}}{\rho_2 - \rho_1} \\
        &= \sum_{\widetilde{{T}}\in \{T_1,T_2\}} \frac12 \frac{\overline{R}\rho_2 - \overline{R}\rho_1}{\rho_2 - \rho_1} \\
        &= \overline{R}.
    \end{align*}
    The discrete gradient $\widetilde{\nabla}_{\rho}(g\beta)(\bm{u}_1,\bm{u}_2)$ satisfies:
    \begin{align*}
        \widetilde{\nabla}_{\rho}(g\beta)(\bm{u}_1,\bm{u}_2) &= \sum_{\widetilde{{T}}\in \{T_1,T_2\}} \frac12 \frac{g(\rho_2,\widetilde{T})/ \widetilde{T} - g(\rho_1,\widetilde{T})/ \widetilde{T}}{\rho_2 - \rho_1} \\
        &= \sum_{\widetilde{{T}}\in \{T_1,T_2\}} -\frac12\frac{\overline{R}}{\gamma-1} \frac{\ln(\widetilde{T}) - \ln(\widetilde{T})}{\rho_2 - \rho_1} + \frac12 \frac{\overline{R}\ln(\rho_2) - \overline{R}\ln(\rho_1)}{\rho_2 -\rho_1} \\
        &= \overline{R} \left(\overline{\rho}^{\ln}\right)^{-1},
    \end{align*}
    where $\overline{(\cdot)}^{\ln}$ denotes the logarithmic average. The density mean \eqref{eq:densitymean} is thus:
    \begin{equation*}
        \widetilde{\rho}(\bm{u}_1,\bm{u}_2) = \frac{\overline{R}}{\overline{R} \left(\overline{\rho}^{\ln}\right)^{-1}} = \overline{\rho}^{\ln},
    \end{equation*}
    for the ideal-gas EoS. We now consider the $\beta$-components of the discrete gradients. Since $p\beta = \rho \overline{R}$, we have $\widetilde{\nabla}_{\beta}(p\beta)(\bm{u}_1,\bm{u}_2) = 0$ as $p\beta$ is constant with respect to changes in $\beta$. Finally, for $\widetilde{\nabla}_{\beta}(g\beta)(\bm{u}_1,\bm{u}_2)$, we have:
    \begin{align*}
        \widetilde{\nabla}_{\beta}(g\beta)(\bm{u}_1,\bm{u}_2) &= \sum_{\widetilde{{\rho}}\in \{\rho_1,\rho_2\}} \frac12 \frac{g(\tilde{\rho},T_2)/ T_2 - g(\tilde{\rho},T_1)/ T_1}{1/T_2 -1/T_1} \\
        &= \sum_{\widetilde{{\rho}}\in \{\rho_1,\rho_2\}} -\frac12 \frac{\overline{R}}{\gamma-1} \frac{\ln(T_2) - \ln(T_1)}{1/T_2 -1/T_1}+\frac12\frac{\overline{R}\ln(\widetilde{\rho})-\overline{R}\ln(\widetilde{\rho})}{1/T_2 -1/T_1} \\
        &= \frac{\overline{R}}{\gamma - 1}\left(\overline{(1/T)}^{\ln} \right)^{-1}.
    \end{align*}
    Since, according to the ideal-gas EoS, $T = p / (\rho \overline{R})$, the ideal-gas internal-energy mean \eqref{eq:internalenergymean} is:
    \begin{equation*}
        \widetilde{e}(\bm{u}_1,\bm{u}_2) = \frac{\overline{R}}{\gamma - 1} \left(\overline{(\overline{R}\rho/p)}^{\ln}\right)^{-1}.
    \end{equation*}
    Comparison against \cite{ranochapreventing, ranochaentropy, ranochathesis}, shows the Ranocha flux is identical to \eqref{eq:numflux} in the ideal-gas EoS case for proper normalization ($\overline{R}=1$).
\end{proof}

\section{Helmholtz energy expressions}\label{app:eos}
We list the specific Helmholtz energies of some of the EoS used in this article. Let $R \approx 8.314 \joule \cdot \kelv^{-1} \cdot \mol^{-1}$ be the universal gas constant (we use a more accurate value to generate results), and $\overline{R} = R / M$ be the specific gas constant.
\begin{itemize}
    \item The ideal-gas (IG) EoS \cite{holystthermodynamics}: 
    \begin{equation}
        \mathcal{A}(\rho,T) := - \overline{R} T \left[1+ \ln\left(\frac{T^{\frac{1}{\gamma -1}}}{\rho} \right) \right], \label{eq:IGEOS}
    \end{equation}
    where we use $\gamma = 1.4$ for $CO_2$.
    \item The Van der Waals (VdW) EoS \cite{holystthermodynamics}: 
    \begin{equation*}
        \mathcal{A}(\rho,T) := - \overline{R}T \left[1 +\ln\left( \frac{(1-\rho b)T^{\frac{\varsigma}{2}}}{\rho} \right) \right] - a\rho,
    \end{equation*}
    where:
    \begin{equation*}
        a = \frac{27}{64} \frac{\overline{R}^2 T_c^2}{p_c}, \quad b = \frac{1}{8} \frac{\overline{R}T_c}{p_c}.
    \end{equation*}
    \item The Peng-Robinson (PR) EoS \cite{pengnew, senguptafully}:
    \begin{equation*}
        \mathcal{A}(\rho,T) := - \overline{R}T \left[1 +\ln\left( \frac{(1-\rho b)T^{\frac{\varsigma}{2}}}{\rho} \right) \right] - \frac{\alpha(T)}{2\sqrt{2}b}\ln\left(\frac{1+(1+\sqrt{2})b\rho}{1+(1-\sqrt{2})b\rho} \right),
    \end{equation*}
    where:
    \begin{equation*}
        \alpha(T) = a\left[1+\left(0.37464 + 1.54226 \theta - 0.26992 \theta^2\right)\left(1-\sqrt{\frac{T}{T_c}} \right)\right],
    \end{equation*}
    and:
    \begin{equation*}
        a=0.457235\frac{\overline{R}^2 T_c^2}{p_c}, \quad b = 0.077796\frac{\overline{R}T_c}{p_c}.
    \end{equation*}
\end{itemize}
For brevity, we do not report the Kunz-Wagner EoS, but refer to \cite{kunzgerg} and our open-source code \cite{kleinHelmEOS2}.

\section{Discretization of viscous stress and heat flux divergence}\label{app:viscdisc}
For simplicity, we will describe the viscous and heat discretization for $d=2$ and only in the $x_1$-direction. The extensions to $d=3$ and other directions are straightforward. We start with the viscous discretization. In two space dimensions, with velocity $\bm{v}=[v_1,v_2]^T$, the viscous stress tensor can be written as:
\begin{equation*}
    \bm{\tau}
    =
    \mu \left( \nabla \bm{v} + \nabla \bm{v}^\top \right)
    - \mu\,(\nabla \cdot \bm{v})\,I.
\end{equation*}
For the viscous flux in the $x_1$-direction, we evaluate a two-point approximation of the stress-tensor column associated to the $x_1$-direction $\bm{\tau}_h^1 : \mathcal{U} \times \mathcal{U} \rightarrow \mathbb{R}^{2}$ between $\bm{u}_{\bm{I}}$ and $\bm{u}_{\bm{I}+\bm{1}_1}$, and denote the full viscous flux between these points $\bm{f}^{\mathrm{visc}}_{\bm{I}+\bm{1}_1/2}$. The two-point approximation is given by centered differences and arithmetic averaging of the transport coefficients $\mu$. 
Denote $\bm{\tau}_{\bm{I}+\bm{1}_1/2} := \bm{\tau}_h^1(\bm{u}_{\bm{I}},\bm{u}_{\bm{I}+\bm{1}_1})$, then the viscous $x_1$-flux is given by:
\begin{equation*}
    \bm{f}^{\mathrm{visc}}_{\bm{I}+\bm{1}_1/2}
    =
    \begin{bmatrix}
        0 \\
        (\tau_1)_{\bm{I}+\bm{1}_1/2} \\
        (\tau_2)_{\bm{I}+\bm{1}_1/2} \\
        (\tau_1)_{\bm{I}+\bm{1}_1/2}(\overline{v}_1)_{\bm{I}+\bm{1}_1/2}
        + (\tau_2)_{\bm{I}+\bm{1}_1/2}(\overline{v}_2)_{\bm{I}+\bm{1}_1/2}
    \end{bmatrix}.
\end{equation*}
Here, the velocity components $(\overline{v}_1)_{\bm{I}+\bm{1}_1/2},(\overline{v}_2)_{\bm{I}+\bm{1}_1/2}$ are approximated with arithmetic means. The viscosity coefficient is also approximated using an arithmetic mean:
\begin{equation*}
    \overline{\mu}_{\bm{I}+\bm{1}_1/2} = \frac{\mu_{\bm{I}}+\mu_{\bm{I}+\bm{1}_1}}{2},
\end{equation*}
and the required derivatives at the face are discretized by
\begin{align*}
    \left(\partial_{x_1} v_1\right)_{\bm{I}+\bm{1}_1/2}
    &=
    \frac{\Delta(v_1)_{\bm{I}+\bm{1}_1/2}}{\Delta x_1}, \\
    \left(\partial_{x_1} v_2\right)_{\bm{I}+\bm{1}_1/2}
    &=
    \frac{\Delta(v_2)_{\bm{I}+\bm{1}_1/2}}{\Delta x_1}, \\
    \left(\partial_{x_2} v_1\right)_{\bm{I}+\bm{1}_1/2}
    &=
    \frac{1}{2}\left(
    \frac{(v_1)_{(i_1,i_2+1)}-(v_1)_{(i_1,i_2-1)}}{2\Delta x_2}
    +
    \frac{(v_1)_{(i_1+1,i_2+1)}-(v_1)_{(i_1+1,i_2-1)}}{2\Delta x_2}
    \right), \\
    \left(\partial_{x_2} v_2\right)_{\bm{I}+\bm{1}_1/2}
    &=
    \frac{1}{2}\left(
    \frac{(v_2)_{(i_1,i_2+1)}-(v_2)_{(i_1,i_2-1)}}{2\Delta x_2}
    +
    \frac{(v_2)_{(i_1+1,i_2+1)}-(v_2)_{(i_1+1,i_2-1)}}{2\Delta x_2}
    \right),
\end{align*}
where we have written out the grid multi-indices in full for convenience. Thus,
\begin{align*}
    (\tau_1)_{\bm{I}+\bm{1}_1/2}
    &=
    2\mu_{\bm{I}+\bm{1}_1/2}\left(\partial_{x_1} v_1\right)_{\bm{I}+\bm{1}_1/2}
    - \mu_{\bm{I}+\bm{1}_1/2}
    \left( \left(\partial_{x_1} v_1\right)_{\bm{I}+\bm{1}_1/2}
    + \left(\partial_y v\right)_{\bm{I}+\bm{1}_1/2} \right), \\
    (\tau_2)_{\bm{I}+\bm{1}_1/2} 
    &=
    \mu_{\bm{I}+\bm{1}_1/2}
    \left(
    \left(\partial_{x_2} v_1\right)_{\bm{I}+\bm{1}_1/2}
    + \left(\partial_{x_1} v_2\right)_{\bm{I}+\bm{1}_1/2}
    \right).
\end{align*}

For the heat conduction term, we discretize Fourier's law \eqref{eq:fourier} in the same way. The $x_1$-component of the heat flux $\bm{f}_{\bm{I}+\bm{1}_1/2}$ is:
\begin{equation*}
    q_{\bm{I}+\bm{1}_1/2}
    =
    \kappa_{\bm{I}+\bm{1}_1/2}
    \frac{\Delta T_{\bm{I}+\bm{1}_1/2}}{\Delta x_1},
    \qquad
    \kappa_{\bm{I}+\bm{1}_1/2}
    =
    \frac{\kappa_{\bm{I}}+\kappa_{\bm{I}+\bm{1}_1}}{2},
\end{equation*}
which contributes only to the total-energy equation:
\begin{equation*}
    \bm{f}^{\mathrm{heat}}_{\bm{I}+\bm{1}_1/2}
    =
    \begin{bmatrix}
        0 \\ 0 \\ 0 \\ q_{\bm{I}+\bm{1}_1/2}
    \end{bmatrix}.
\end{equation*}
The corresponding divergence contribution is then assembled in conservative flux-difference form as in \eqref{eq:disc}.
\section{Ideal-gas Taylor-Green verification}\label{app:igvalid}
To verify our HelmEOS2 code implementation we compare the results of the viscous Taylor-Green vortex (viscous TGV) experiment against a reference solution \cite{debonissolutions}. The reference solution is computed using the ideal-gas (IG) equation of state (EoS). Hence, for this comparison, we will also compute our results using the IG EoS. The reference quantities for the initial condition \eqref{eq:tgv_initial} are chosen so that the Reynolds number, Mach number and Prandtl number are $\mathrm{Re}=1600, \mathrm{Ma}=0.1, \mathrm{Pr}=0.71$, respectively. Note that the precise Prandtl number used in \cite{debonissolutions} is not reported, hence we use the commonly used value from \cite{wanghighorder}. The reference quantities used in \eqref{eq:tgv_initial} are taken as in \autoref{tab:igtgparams}. We set the dynamic viscosity and heat conductivity of the ideal gas constant at $\mu = \mu_0$ and $\kappa = \kappa_0$, we also assume the specific heat ratio is constant at $\gamma = 1.4$. The mesh and timestep size will be the same as the viscous TGV experiment described in \autoref{sec:tgv}, as this should be sufficient to resolve the turbulent scales at a modest CFL number. We will also use Wray's low-storage Runge-Kutta (RK) method \cite{wrayminimal} again. 

\begin{table}[]
\centering
\begin{tabular}{lll}
\hline
Symbol      & Value                 & Unit                                  \\ \hline
$\rho_0$   & $1.198\rho_c$             & $\kg \cdot \meter^{-3}$               \\
$T_0$     & $294.4444$              & $\kelv $                              \\
$p_0$     & $p(\rho_0,T_0)$         & $\pascal $                            \\
$V_0$     & $0.1 \cdot c(\rho_0, T_0)$  & $\meter \cdot \second^{-1}$       \\
$L$        & $0.001524 \hbox{ }\meter$  & $\meter$                          \\ 
$\mu_0$      & $\rho_0 V_0 L / 1600$    & $\pascal \cdot \second$           \\ 
$\kappa_0$  & $\mu_0 c_p(\rho_0,T_0) / 0.71$  & $\joule \cdot \second^{-1} \cdot \meter^{-1} \cdot \kelv^{-1}$ \\ \hline
\end{tabular}
\caption{Choice of reference quantities in \eqref{eq:tgv_initial} for the simulation of the Taylor-Green vortex with an ideal gas.}
\label{tab:igtgparams}
\end{table}

It is a custom in the literature to plot the dissipation rate of kinetic energy \cite{wanghighorder,debonissolutions,sciacovelliassessment,vanreescomparison}. To standardize our results with respect to the literature, we must compute the kinetic-energy dissipation rate using the following normalized quantities:
\begin{equation*}
    \rho^* := \frac{\rho}{\rho_0}, \quad \bm{v}^* := \frac{\bm{v}}{V_0}, \quad \bm{x}^* := \frac{\bm{x}}{2\pi L}, \quad t^* := \frac{t}{t_c}.
\end{equation*}
Let:
\begin{equation*}
    \mathcal{K}(t) := \int_{\Omega} k(\bm{u}(\bm{x},t))d\Omega.
\end{equation*}
Then we compute the normalized kinetic-energy dissipation rate $\left(\dudt{\mathcal{K}}\right)^*$ from the physical $\dudt{\mathcal{K}}$ as:
\begin{equation*}
    \left(\dudt{\mathcal{K}}\right)^* = \frac{t_c}{\rho_0 V_0^2 |\Omega|}\dudt{\mathcal{K}},
\end{equation*}
where $|\Omega| \in \mathbb{R}_+$ is the volume of the spatial domain $\Omega$. To further assure the kinetic-energy budget as in \eqref{eq:kinenbudget} still holds, the quantities $\Pi, \mathcal{D}, \mathcal{E}$ in \eqref{eq:kinenbudget} are normalized using the same normalization factor:
\begin{equation*}
    \Pi^* = \frac{t_c}{\rho_0 V_0^2 |\Omega|} \Pi, \quad \mathcal{D}^* = \frac{t_c}{\rho_0 V_0^2 |\Omega|} \mathcal{D}, \quad \mathcal{E}^* = \frac{t_c}{\rho_0 V_0^2 |\Omega|} \mathcal{E}.
\end{equation*}
We consistently use normalized quantities in all results related to the kinetic-energy dissipation rate; hence, we drop the normalization notation $(\cdot)^*$ throughout. The results of the verification study are shown in \autoref{fig:dkdt_IG} and \autoref{fig:vort_mag_IG}. The results in \autoref{fig:dkdt_IG} show a close agreement in kinetic-energy dissipation between our second-order GEoS-KEEP scheme and the high-order, dispersion-relation-preserving scheme used in \cite{debonissolutions}, also defined on a mesh of $512^3$ cells. As discussed in \autoref{sec:vtgv}, we expect a near exact match between $\dudt{\mathcal{K}}$ and $\mathcal{E}$, which \autoref{fig:dkdt_IG} shows to be almost attained. The inexact match can be attributed to insufficient grid convergence. The lack of grid convergence is further corroborated by \autoref{fig:vort_mag_IG}, where we reproduce contours of the normalized vorticity magnitude $\omega$ as in \eqref{eq:normvort}. Comparison with the same figure in \cite[Figure 5]{debonissolutions} shows that our solution is nearly grid-converged, which we consider sufficient for our experiments.

\begin{figure}
    \centering
    \includegraphics[width=0.6\linewidth]{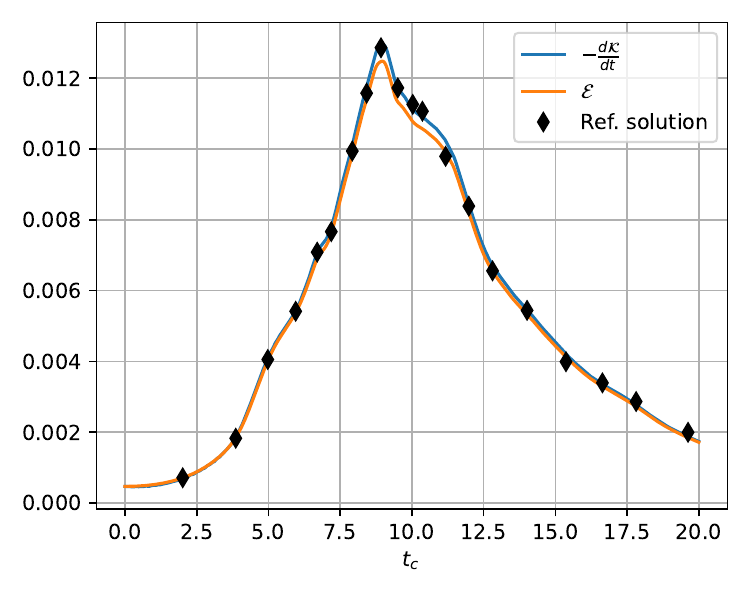}
    \caption{The normalized kinetic-energy dissipation rate \eqref{eq:kinenbudget} as computed by our implementation compared against $\mathcal{E}$ and the reference \cite{debonissolutions} for the viscous TGV experiment using the IG EoS.}
    \label{fig:dkdt_IG}
\end{figure}

\begin{figure}
    \centering
    \includegraphics[width=0.8\linewidth]{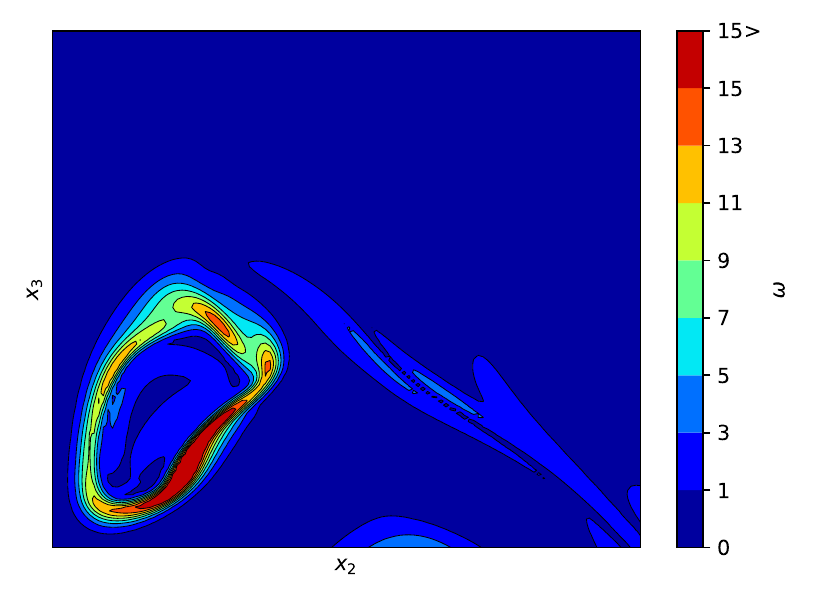}
    \caption{The normalized vorticity magnitude $\omega$ as in \eqref{eq:normvort} at $x_1 = -\pi L$ in the region $x_2\in[0, \frac{\pi L}{2}]$ and $x_3 \in [\frac{\pi L}{2}, \pi L]$ computed for the viscous TGV experiment using the IG EoS, cf. \cite{debonissolutions}. To match \cite[figure 5]{debonissolutions}, the values of $\omega > 15$ are collapsed to the same color value.}
    \label{fig:vort_mag_IG}
\end{figure}

\bibliographystyle{elsarticle-num}
\bibliography{cas-refs}

\end{document}